\newcommand*{\da@rightarrow}{\mathchar"0\hexnumber@\symAMSa 4B }
\newcommand*{\da@leftarrow}{\mathchar"0\hexnumber@\symAMSa 4C }
\newcommand*{\xdashrightarrow}[2][]{%
  \mathrel{%
    \mathpalette{\da@xarrow{#1}{#2}{}\da@rightarrow{\,}{}}{}%
  }%
}
\newcommand{\xdashleftarrow}[2][]{%
  \mathrel{%
    \mathpalette{\da@xarrow{#1}{#2}\da@leftarrow{}{}{\,}}{}%
  }%
}
\newcommand*{\da@xarrow}[7]{%
  \sbox0{$\ifx#7\scriptstyle\scriptscriptstyle\else\scriptstyle\fi#5#1#6\m@th$}%
  \sbox2{$\ifx#7\scriptstyle\scriptscriptstyle\else\scriptstyle\fi#5#2#6\m@th$}%
  \sbox4{$#7\dabar@\m@th$}%
  \dimen@=\wd0 %
  \ifdim\wd2 >\dimen@
    \dimen@=\wd2 %
  \fi
  \count@=2 %
  \def\da@bars{\dabar@\dabar@}%
  \@whiledim\count@\wd4<\dimen@\do{%
    \advance\count@\@ne
    \expandafter\def\expandafter\da@bars\expandafter{%
      \da@bars
      \dabar@ 
    }%
  }%
  \mathrel{#3}%
  \mathrel{%
    \mathop{\da@bars}\limits
    \ifx\\#1\\%
    \else
      _{\copy0}%
    \fi
    \ifx\\#2\\%
    \else
      ^{\copy2}%
    \fi
  }%
  \mathrel{#4}%
}
\newcommand{\QMtwosix}{\texttt{262144.d.524288.1}}
\numberwithin{equation}{subsection}
\theoremstyle{plain}
\newtheorem*{theorema*}{Theorem A}
\newtheorem{proposition}[equation]{Proposition}
\newtheorem{lemma}[equation]{Lemma}
\newtheorem{corollary}[equation]{Corollary}
\newtheorem{hypothesis}[equation]{Hypothesis}
\theoremstyle{definition}
\newtheorem{example}[equation]{Example}
\theoremstyle{remark}
\newtheorem{remark}[equation]{Remark}
\newenvironment{enumalph}
{\begin{enumerate}}
{\end{enumerate}}
\newenvironment{enumroman}
{\begin{enumerate}}
{\end{enumerate}}
\newcommand{\Z}{\mathbb{Z}}
\newcommand{\Q}{\mathbb{Q}}
\newcommand{\R}{\mathbb{R}}
\newcommand{\F}{\mathbb{F}}
\let\C\relax
\newcommand{\C}{\mathbb{C}}
\let\P\relax
\newcommand{\P}{\mathbb{P}}
\newcommand{\PP}{\mathbb{P}}
\newcommand{\Fp}{{{\F}_{\frakp}}}
\renewcommand{\d}{\operatorname{d} \!}
\newcommand{\Infty}{\infty_X}
\newcommand{\defi}[1]{\textsf{#1}} 	
\let\Re\relax
\DeclareMathOperator{\Re}{Re}
\let\Im\relax
\DeclareMathOperator{\Im}{Im}
\DeclareMathOperator{\Jac}{Jac}
\DeclareMathOperator{\M}{M}
\DeclareMathOperator{\opchar}{char}
\DeclareMathOperator{\diag}{diag}
\DeclareMathOperator{\Nm}{Nm}
\let\U\relax
\DeclareMathOperator{\U}{U}
\DeclareMathOperator{\SU}{SU}
\DeclareMathOperator{\Gal}{Gal}
\DeclareMathOperator{\Frob}{Frob}
\DeclareMathOperator{\AJ}{AJ}
\DeclareMathOperator{\Mum}{Mum}
\newcommand{\et}{\mathrm{\acute{e}t}}
\DeclareMathOperator{\opdiv}{div}
\DeclareMathOperator{\lcm}{\mathrm{lcm}}
\DeclareMathOperator{\Div}{Div}
\DeclareMathOperator{\tr}{\mathrm{tr}}
\DeclareMathOperator{\disc}{\mathrm{disc}}
\DeclareMathOperator{\rk}{\mathrm{rk}}
\DeclareMathOperator{\End}{\mathrm{End}}
\DeclareMathOperator{\NS}{\mathrm{NS}}
\DeclareMathOperator{\Sym}{\mathrm{Sym}}
\DeclareMathOperator{\Pic}{\mathrm{Pic}}
\DeclareMathOperator{\Br}{\mathrm{Br}}
\DeclareMathOperator{\alg}{{al}}
\newcommand{\frakp}{\mathfrak{p}}
\newcommand{\Qbar}{{\overline{\Q}}}
\newcommand{\Magma}{{\textsc{Magma}}}
\newcommand{\SageMath}{{\textsc{SageMath}}}
\newcommand{\psmod}[1]{~(\textup{\text{mod}}~{#1})}
\newcommand{\FAconn}{F_A^{\textup{\tiny{conn}}}}
\definecolor{darkred}{HTML}{CC1F1F}
\definecolor{green}{rgb}{.4,.7,.4}
\definecolor{blue}{rgb}{.2,.6,.75}
\definecolor{pastelb}{HTML}{3333FF}
\definecolor{pastelyellow}{rgb}{0.992157, 0.552941, 0.235294}
\definecolor{pastelorange}{rgb}{0.941176, 0.231373, 0.12549}
\definecolor{pastelred}{rgb}{0.741176, 0., 0.14902}
\definecolor{darkbrown}{rgb}{0.25098, 0., 0.0745098}
\begin{document}

\title[Endomorphism algebra computations]{Rigorous computation of the \\ endomorphism ring of a Jacobian}

\author{Edgar Costa}
\address{Department of Mathematics
77 Massachusetts Ave., Bldg. 2-252B
Cambridge, MA 02139, USA}
\email{edgarc@mit.edu}
\urladdr{\url{https://edgarcosta.org/}}

\author{Nicolas Mascot}
\address{Department of Mathematics
Faculty of Arts and Sciences
American University of Beirut
P.O. Box 11-0236
Riad El Solh,
Beirut 1107 2020
Lebanon}
\email{nm116@aub.edu.lb}
\urladdr{\url{https://staff.aub.edu.lb/~nm116/}}

\author{Jeroen Sijsling}
\address{Universit\"at Ulm, Institut f\"ur Reine Mathematik, D-89068 Ulm, Germany}
\email{jeroen.sijsling@uni-ulm.de}
\urladdr{\url{https://jrsijsling.eu/}}

\author{John Voight}
\address{Department of Mathematics, Dartmouth College, 6188 Kemeny Hall, Hanover, NH 03755, USA}
\email{jvoight@gmail.com}
\urladdr{\url{http://www.math.dartmouth.edu/~jvoight/}}

\begin{abstract}
    We describe several improvements and generalizations to algorithms for the
    rigorous computation of the endomorphism ring of the Jacobian of a curve
    defined over a number field.
\end{abstract}

\date{\today}

\maketitle

\section{Introduction}

\subsection{Motivation}

The computation of the geometric endomorphism ring of the Jacobian of a curve
defined over a number field is a fundamental question in arithmetic geometry.
For curves of genus $2$ over $\Q$, this was posed as a problem in 1996 by
Poonen \cite[\S 13]{poonen-ants}. The structure of the endomorphism ring and
its field of definition have important implications for the arithmetic of the
curve, for example when identifying of the automorphic realization of its
$L$-function \cite{bssvy}.

Let $F$ be a number field with algebraic closure $F^{\alg}$.  Let $X$ be a nice
curve over $F$, let $J$ be its Jacobian, and $J^{\alg}$ be its base change to
$F^{\alg}$. In this article, to \emph{compute the geometric endomorphism ring}
of $J$ means to compute an abstractly presented $\Z$-algebra $B$ (associative
with $1$ and free of finite rank as a $\Z$-module) equipped with a continuous
action of $\Gal(F^{\alg} \,|\, F)$ (factoring through a finite quotient)
together with a computable ring isomorphism
\begin{equation}
    \iota\colon B \xrightarrow{\sim} \End(J^{\alg})
\end{equation}
that commutes with the action of $\Gal(F^{\alg} \,|\, F)$.  (In this overview,
we are agnostic about how to encode elements of $\End(J^{\alg})$ in bits;
see below for a representation in terms of correspondences.)  Lombardo \cite[\S
5]{lombardo-endos} has shown that the geometric endomorphism ring can be
computed in principle using a day-and-night algorithm---but this algorithm
would be hopelessly slow in practice.

For a curve $X$ of genus $2$, there are practical methods to compute the
geometric endomorphism ring developed by van Wamelen \cite{vanwamelen-cm0,
vanwamelen-cm, vanwamelen-smart} for curves with complex multiplication (CM)
and more recently by Kumar--Mukamel \cite{kumar-mukamel} for curves with real
multiplication (RM).  A common ingredient to these approaches, also described
by Smith \cite{smith-thesis} and in its \Magma\ \cite{Magma} implementation by
van Wamelen \cite{vanwamelen-magma}, is a computation of the \emph{numerical
endomorphism ring}, in the following way.  First, we embed $F$ into $\C$ and by
numerical integration we compute a period matrix for $X$.  Second, we find
putative endomorphisms of $J$ by computing integer matrices (with small
coefficients) that preserve the lattice generated by these periods, up to the
computed precision.  Finally, from the tangent representation of such a
putative endomorphism, we compute a correspondence on $X$ whose graph is a
divisor $Y \subset X \times X$; the divisor $Y$ may then be rigorously shown to
give rise to an endomorphism $\alpha \in \End(J_K)$ over an extension $K
\supseteq F$ by exact computation. From this computation, we can also recover
the multiplication law in $\End(J^{\alg})$ and its Galois action \cite[\S
6]{bssvy}.

In the work on curves of genus $2$ of van Wamelen \cite{vanwamelen-cm} and
Kumar--Mukamel \cite{kumar-mukamel}, in the last step the divisor $Y$
representing the correspondence and endomorphism is found by interpolation, as
follows.  Let $P_0 \in X(F^{\alg})$ be a Weierstrass point on $X$.  Given a
point $P \in X(F^{\alg})$, by inverting the Abel--Jacobi map we compute the
(generically unique) pair of points $Q_1, Q_2 \in X(F^{\alg})$ such that
\begin{equation}\label{eq:alphaP}
    \alpha ([P - P_0]) = [Q_1 + Q_2 - 2 P_0] \in J^{\alg}
    =\Pic^0(X)(F^{\alg}).
\end{equation}
In this approach, the points $Q_1, Q_2$ are computed numerically, and the
divisor $Y$ is found by linear algebra by fitting $\{(P, Q_1),(P,Q_2)\} \subset
Y$ for a sufficiently large sample set of points $P$ on $X$.

\subsection{Contributions}

In this paper, we revisit this strategy and seek to augment its practical
performance in several respects. Our methods apply to curves of arbitrary genus
as well as isogenies between Jacobians, but we pay particular attention to the
case of the endomorphism ring of a curve of genus $2$ and restrict to this case
in the introduction. We present three main ideas which can be read
independently.

First, in section \ref{sec:complexendos}, we develop more robust numerical
infrastructure by applying methods of Khuri-Makdisi \cite{kkm-linalg} for
computing in the group law of the Jacobian.  Instead of directly inverting the
Abel--Jacobi map at point, we divide this point by a large power of $2$ to
bring it close to the origin where Newton iteration converges well, then we
multiply back using methods of linear series. In this way, we obtain increased
stability for computing the equality \eqref{eq:alphaP} numerically.

Second, in section \ref{sec:puiseux}, we show how to dispense entirely with
numerical inversion of the Abel--Jacobi map (the final interpolation step) by
working infinitesimally instead. Let $P_0 \in X(K)$ be a base point on $X$ over
a finite extension $K \supseteq F$. We then calculate the equality
\eqref{eq:alphaP} with $P=\widetilde{P}_0 \in X(K[[t]])$ the formal expansion
of $P_0$ with respect to a uniformizer $t$ at $P_0$. On an affine patch, we may
think of $\widetilde{P}_0$ as the local expansion of the coordinate functions
at $P_0$ in the parameter $t$. The points $Q_1$, $Q_2$ accordingly belong to a
ring of Puiseux series, and we can compute $Q_1$, $Q_2$ using a successive
lifting procedure with exact linear algebra to sufficient precision to fit the
divisor $Y$.  Our approach is similar to work of Couveignes--Ezome \cite[\S 6.2]{CE},
who also formally solve a differential system to compute equations for an 
isogeny between curves of genus $2$.  For completeness (and as a good warmup), 
we also consider in
section \ref{sec:newton} a hybrid method, where we compute \eqref{eq:alphaP}
for a single suitable point $P \neq P_0$ and then successively lift over a ring
of power series instead. In both cases, we obtain further speedups by working
over finite fields and using a fractional version of the Chinese remainder
theorem. These methods work quite well in practice.

Third, in section \ref{sec:upperbounds} we consider upper bounds on the
dimension of the endomorphism algebra as a $\Q$-vector space, used to match the
lower bounds above and thereby sandwiching the endomorphism ring.  Lombardo
\cite[\S 6]{lombardo-endos} has given such upper bounds in genus $2$ by
examining Frobenius polynomials; we consider a slightly different approach in
this case by first bounding from above the dimension of the subalgebra of
$\End(J^{\alg})_{\Q}$ fixed under the Rosati involution (using the known
Tate conjecture for the reduction of the abelian surface modulo primes).  This
specialized algorithm in genus 2 again is quite practical.  We then generalize
this approach to higher genus: applying work of Zywina \cite{zywina-14}, we
again find rigorous upper bounds and we show that these are sharp if the
Mumford--Tate conjecture holds for the Jacobian and if a certain hypothesis
on the independence of Frobenius polynomials holds.

We conclude in section \ref{sec:examples} with some examples. Confirming
computations of Lombardo \cite[\S 8.2]{lombardo-endos}, we also verify the
correctness of the endomorphism data in the \emph{$L$-functions and Modular
Forms DataBase} (LMFDB) \cite{lmfdb} which contains 66\,158 curves of genus $2$
with small minimal absolute discriminant.

Our implementation of these results is available online \cite{cms-package}, and
all examples in this paper can be inspected in detail by going to its
subdirectory \texttt{endomorphisms/examples/paper}. This code has already been
used by Cunningham--Demb\'el\'e to establish the paramodularity of an abelian
threefold in the context of functoriality \cite{cunningham-dembele-hmf}.

\subsection*{Acknowledgments}

The authors would like to thank Kamal Khuri-Makdisi and David Zywina for
helpful conversations, as well as the anonymous referees for their comments and
suggestions.  Mascot was supported by the EPSRC Programme Grant
EP/K034383/1 ``LMF: L-Functions and Modular Forms''. Sijsling was supported by
the Juniorprofessuren-Programm ``Endomorphismen algebraischer Kurven''
(7635.521(16)) from the Science Ministry of Baden--W\"urttemberg. Voight was
supported by an NSF CAREER Award (DMS-1151047) and a Simons Collaboration Grant
(550029).

\section{Setup} \label{sec:numexactendos}

To begin, we set up some notation and background, and we discuss
representations of endomorphisms in bits.

\subsection{Notation}

Throughout this article, we use the following notation. Let $F \subset \C$ be a
number field with algebraic closure $F^{\alg}$.  Let $X$ be a nice (i.e.,
smooth, projective and geometrically integral) curve over $F$ of genus $g$. Let
$J=\Jac(X)$ be the Jacobian of $X$.  We abbreviate $J^{\alg}=J_{F^{\alg}}$ for
the base change of $J$ to $F^{\alg}$.  When discussing algorithms, we assume
that $X$ is presented in bits by equations in affine or projective space; by
contrast, we will not need to describe $J$ as a variety defined by equations,
as we will only need to describe the points of $J$.

\subsection{Numerical endomorphisms}

The first step in computing the endomorphism ring is to compute a numerical
approximation to it. This technique is explained in detail by van Wamelen
\cite{vanwamelen-magma} in its \Magma\ \cite{Magma} implementation for
hyperelliptic curves. See also the sketch by
Booker--Sijsling--Sutherland--Voight--Yasaki \cite[\S 6.1]{bssvy} where with a
little more care the Galois structure on the resulting approximate endomorphism
ring is recovered as well.

The main ingredients of the computation of the numerical endomorphism ring are
the computation of a period matrix of $X$---i.e., the periods of an $F$-basis
$\omega_1,\dots,\omega_g$ of the space of global differential $1$-forms on $X$
over a chosen symplectic homology basis---followed by lattice methods. (For
more detail on period computations, see the next section.) The output of this
numerical algorithm is a putative $\Z$-basis $R_1, \dots, R_d \in \M_{2g}(\Z)$
for the ring $\End (J^{\alg})$. These matrices represent the action of the
corresponding endomorphisms on a chosen basis of the homology group $H_1 (X,
\Z)$, and accordingly, the corresponding ring structure is induced by matrix
multiplication. If $\Pi \in \M_{g, 2g}(\C)$ is the period matrix of $J$, then
the equality
\begin{equation}
    \label{equation:analytic_geometric}
    M \Pi = \Pi R
\end{equation}
holds, where $M \in \M_g(\C)$ is the representation on the tangent space $H^0
(X, \omega_X)^*$, given by left multiplication.  Equation
\eqref{equation:analytic_geometric} allows us to convert (numerically) between
the matrices $R_j \in \M_{2g}(\Z)$ and matrices $M_j \in \M_g(\C)$ describing
the action on the tangent space, which allows us to descend to $\M_g
(F^{\alg})$ and hence to $\M_g (K)$ for extensions of $K$ by using Galois
theory.

We take this output as being given for the purposes of this article; our goal
is to certify its correctness.

\begin{remark}
    In other places in the literature, equation
    \eqref{equation:analytic_geometric} is transposed. We chose this convention
    because it makes the map $\End (J) \to \End( H^0(X,\omega_X)^*)$ a
    ring homomorphism.
\end{remark}

\begin{example} \label{example:QMexample-numeric}
    We will follow one example throughout this paper, followed by several other
    examples in the last section.

    Consider the genus 2 curve
    $X \colon y^2 = x^{5} - x^{4} + 4 x^{3} - 8 x^{2} + 5 x - 1$ with LMFDB
    label
    \href{http://www.lmfdb.org/Genus2Curve/Q/262144/d/524288/1}{\QMtwosix}.
    As described above, we find the period matrix
    \begin{equation}
        \Pi \approx
        \begin{pmatrix}
            1.851 - 0.1795 i & 3.111 + 2.027 i & -1.517 + 0.08976 i & 1.851 \\
            0.8358 - 2.866 i & 0.3626 + 0.1269 i & -1.727 + 1.433 i & 0.8358
        \end{pmatrix}
    \end{equation}
    (computed to 600 digits of precision in about 10 CPU seconds on a standard
    desktop machine).  We then verify that $X$ has numerical quaternionic
    multiplication. More precisely, we have numerical evidence that endomorphism
    ring is a maximal order in the quaternion algebra over $\Q$ with discriminant
    $6$. For example, we can identify a putative endomorphism $\alpha
    \overset{?}{\in} \End(J_\C)$ with representations
    \begin{equation}
        M = \left(\begin{array}{rr}
                0 & \sqrt{2} \\
                \sqrt{2} & 0
        \end{array}\right)
        \text{ and }
        R =
        \begin{pmatrix}
            0 & -3 & 0 & -1 \\
            -2 & 0 & 1 & 0 \\
            0 & -4 & 0 & -2 \\
            4 & 0 & -3 & 0
        \end{pmatrix} ,
    \end{equation}
    which satisfies $\alpha^2 = 2$.
\end{example}

The numerical stability of the numerical method outlined above has not been
analyzed. The \Magma\ implementation will occasionally throw an error because
of intervening numerical instability (see Example \ref{exm:magmaproblem}
below); this can often be resolved by slightly transforming the defining
equation of $X$.

\begin{remark}
  There are several available implementations to compute the period
  matrix and the Abel--Jacobi map in addition to
  \Magma. A recent robust method to calculate period matrices of cyclic covers
  of the projective line was developed by Molin--Neurohr \cite{molin-neurohr}.
  We also recommend the introduction of this reference for a survey of other
  available implementations.

  Work continues: Neurohr is working on the generalization of
  these algorithms to (possibly singular) plane models of general algebraic
  curves, and for these curves a \SageMath\ implementation by Nils Bruin and
  Alexandre Zotine is also in progress.
\end{remark}

\begin{remark}
  For hyperelliptic curves and plane quartics we may also speed up the
  calculation of periods through arithmetic--geometric mean (AGM) methods. So
  far this has been implemented in the hyperelliptic case \cite{sijsling-agm}.
  While this delivers an enormous speedup, the AGM method introduces a change
  of basis of differentials, which makes us lose information regarding the
  Galois action.
\end{remark}

\section{Complex endomorphisms} \label{sec:complexendos}

In this section, we describe a numerically stable method for inversion of the
Abel--Jacobi map.

\subsection{Abel--Jacobi setup}

Let $P_0 \in X(\C)$ be a base point and let
\begin{equation}
    \begin{aligned}
        \AJ_{P_0} \colon X &\to J \\
        P &\mapsto [P-P_0]
    \end{aligned}
\end{equation}
be the Abel--Jacobi map associated to $P_0$. Complex analytically, using our
chosen basis $\omega_1, \ldots , \omega_g$ of $H^0 (X, \omega_X)$
we identify
$J(\C) \simeq \C^g/\Lambda$ where $\Lambda \simeq \Z^{2g}$ is the period
lattice of $J$. Under this isomorphism the Abel--Jacobi map is
\begin{equation}
    \AJ_{P_0}(P)
    =
    \left(\int_{P_0}^{P} \omega_i\right)_{i=1,\dots,g} \in \C^g/\Lambda .
\end{equation}
The numerical evaluation of these integrals is standard: we compute a low
degree map $\varphi\colon X \to \PP^1$, make careful choices of the branch cuts
of $\varphi$, and then integrate along a polygonal path that avoids the
ramification points of $\varphi$.

\begin{example} \label{exm:hyperell}
    Suppose $X$ is a hyperelliptic curve of genus $g$ given by an equation of the
    form $y^2 = f(x)$ where $f(x)$ is squarefree of degree $2g+1$ or $2g+2$.
    Then an $F$-basis of differentials is given by
    \begin{equation}
        \omega_1 =\frac{\d{x}}{y},\
        \omega_2 = x\frac{\d{x}}{y},\ldots,\
        \omega_g = x^{g-1}\frac{\d{x}}{y}.
    \end{equation}
    In the $x$-plane, we draw a polygonal path $\gamma_x$ from $x(P_0)$ to
    $x(P)$ staying away from the roots of $f(x)$ different from $P_0, P$. We then
    lift $\gamma_x$ to a continuous path $\gamma$ on $X$.

    Suppose for simplicity that $P_0$ is not a Weierstrass point, so $f(x(P_0))
    \neq 0$.  (The case where $P_0$ is a Weierstrass point can be handled
    similarly by a choice of square root and more careful analysis.) Then
    $y(P_0) = \sqrt{f(x(P_0))}$ selects a branch of the square root. To keep
    track of the square root along $\gamma$, we use four determinations of the
    square root over $\C$, with respective branch cuts along the half-axes $\Re
    z >0$, $\Re z < 0$, $\Im z > 0$ and $\Im z < 0$. On each segment of
    $\gamma_x$, we change the branch of the square root whenever $\Re f$ or
    $\Im f$ changes sign, so as to keep the branch cut away from the values of
    $f(x)$. For instance, in the case illustrated by Figure
    \ref{fig:beautifulpic}, letting $t$ be the parameter of integration and
    assuming we started with the determination whose branch cut is along $\Im z
    > 0$, we would first switch to the determination whose branch cut is along
    $\Re z < 0$ when $\Im f(\gamma_x(t))$ changes from negative to positive,
    and then to the determination whose branch cut is along $\Im z < 0$ when
    $\Re f(\gamma_x(t))$ changes from positive to negative, so that the branch
    cut is always at least $90^\circ$ away from $f(\gamma_x(t))$. Of course,
    the sign of the square root may need to be corrected every time we switch
    from one determination to another, so as to get a continuous determination
    of $\sqrt{f(\gamma(t))}$. Also note that by construction, the integration
    path avoids the roots of $f$, so the signs of $\Re f(\gamma(t))$ and $\Im
    f(\gamma(t))$ never change simultaneously.

    \addtocounter{equation}{1}
    \begin{figure}[t]
        \begin{subfigure}[b]{0.30\textwidth}
            \scalebox{0.7}{
                \begin{tikzpicture}[scale=1, baseline=(current bounding box.center)]
                    \draw [domain=4:8, scale=1] plot (20*\x:\x);
                    \draw[domain=-3:3.5,scale=1] plot(\x-3,\x*\x/5-\x+3);
                    \draw (-4.46,4.88) node {\textbullet};
                    \draw (-5.4,4.95) node {$f=0 \ $};
                    \draw (-0.5,1.4) node {$\Im f < 0$};
                    \draw (-0.5,2.2) node {$\Im f > 0$};
                    \draw (0,4) node[rotate=-30] {$\Re f > 0$};
                    \draw (0.4,4.7) node[rotate=-30] {$\Re f < 0$};
                    \draw[very thick, pastelorange, ->] (-7,1) -- (-5,2.5);
                    \draw [very thick, pastelorange] (-5,2.5) -- (-3.55,3.59);
                    \draw[very thick, pastelyellow, ->]  (-3.55,3.59) -- (-2.5,4.375);
                    \draw[very thick, pastelyellow]  (-2.5,4.375) -- (-1.5,5.1);
                    \draw[very thick, pastelred, ->]  (-1.5,5.1) -- (0,25/4);
                    \draw[very thick, pastelred]  (0,25/4) -- (1,7);
                    \draw (-6,2.2) node[rotate=37] {Integration path $\gamma$};
                \end{tikzpicture}
            }
        \end{subfigure}
        \begin{subfigure}{0.3\textwidth}
            \begin{tikzpicture}[scale=0.7, baseline=(current bounding box.center)]
                \draw (0,0);
                \draw[ ->] (4,-1)--(6,-1);
                \draw (5,-0.3) node {$f$};
            \end{tikzpicture}
        \end{subfigure}
        \begin{subfigure}[b]{0.30\textwidth}
            \scalebox{0.7}{
                \begin{tikzpicture}[scale=1, baseline=(current bounding box.center)]
                    \draw[->] (-3,0) -- (3,0) node[right] {$\Re f$};
                    \draw[->] (0,-3) -- (0,3) node[above] {$\Im f$};
                    \draw[very thick, pastelorange, ->, domain=-1.2:-0.6,scale=1] plot(2-\x*\x,1.5*\x);
                    \draw[very thick, pastelorange, domain=-0.6:0,scale=1] plot(2-\x*\x,1.5*\x);
                    \draw[very thick, pastelyellow, ->, domain=0:0.7,scale=1] plot(2-\x*\x,1.5*\x);
                    \draw[very thick, pastelyellow, domain=0.7:1.414,scale=1] plot(2-\x*\x,1.5*\x);
                    \draw[very thick, pastelred, ->, domain=1.414:1.8,scale=1] plot(2-\x*\x,1.5*\x);
                    \draw[very thick, pastelred, domain=1.8:2,scale=1] plot(2-\x*\x,1.5*\x);
                \end{tikzpicture}
            }
        \end{subfigure}
        \caption{Changing the branches of $\sqrt{f(x)}$ along
        $\gamma$}\label{fig:beautifulpic}
    \end{figure}
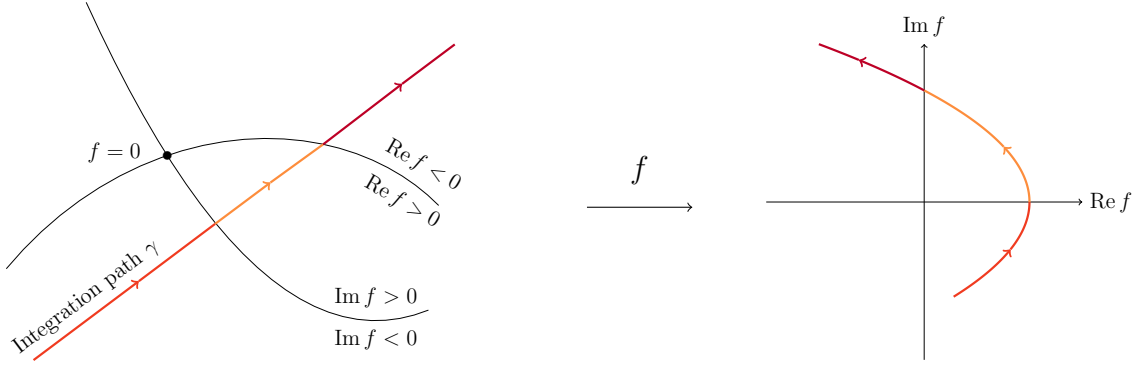
    In this way, the integrals $\int_{P_0}^P \omega_j$ can be computed, and
    thereby the Abel--Jacobi map.
\end{example}

Now let $O_0=O_{0,1}+\dots+O_{0,g}$ be an effective (``origin'') divisor of
degree $g$. Riemann--Roch ensures that for a generic choice of pairwise
distinct points $O_{0,k} \in X(\C)$, the derivative of the Abel--Jacobi map
\begin{equation}
    \begin{aligned}
        \AJ \colon \Sym^g(X)(\C) & \to \C^g/\Lambda \\
        \{ Q_1, \cdots, Q_g \} & \mapsto \displaystyle \sum_{k=1}^g
        \left( \int_{O_{0,k}}^{Q_k} \omega_j \right)_{j=1,\cdots,g}
    \end{aligned}
\end{equation}
is non-singular at $O_0$, so we assume that this is indeed the case from now
on. As explained by Mumford, for a general point $[D] \in J(\C)=\Pic^0(X)(\C)$,
by Riemann--Roch we can write
\begin{equation}
    [D] = [Q_1 + \dots + Q_g - O_0]
\end{equation}
with $Q_1,\dots,Q_g \in X(\C)$ unique up to permutation; this defines a
rational map
\begin{equation}
    \begin{aligned}
        \Mum\colon J &\dashrightarrow \Sym^g(X) \\
        [D] &\mapsto \{Q_1,\dots,Q_g\}.
    \end{aligned}
\end{equation}
The composition $\AJ \circ \Mum$ is the identity map on $J$, so then $\Mum$ is
a right inverse to $\AJ$.  Analytically, for $b \in \C^g/\Lambda$, we have
$\Mum(b) = \{Q_1,\dots,Q_g\}$ where
\begin{equation} \label{eqn_AJ}
    \left(\sum_{k=1}^g \int_{O_{0,k}}^{Q_k} \omega_j \right)_{j=1,\dots,g}
    \equiv b \pmod{\Lambda}.
\end{equation}

Now let $\alpha \in \End(J_\C)$ be a nonzero numerical endomorphism represented
by the matrix $M \in \M_g(\C)$ as in \eqref{equation:analytic_geometric}.
Consider the following composed rational map
\begin{equation}\label{eqn:alphastar}
    \alpha_X
    \colon
    X \xrightarrow{\AJ} J \xrightarrow{\alpha} J \xdashrightarrow{\Mum} \Sym^g(X).
\end{equation}
Then we have $\alpha_X(P)=\{Q_1,\dots,Q_g\}$ if and only if
\begin{equation}
    \alpha([P - P_0])=[Q_1 + \dots + Q_g - O_0].
\end{equation}
As mentioned in the introduction, the map $\alpha_X$ can be used to rigorously
certify that $\alpha$ is an endomorphism of $J$ by interpolation.  We just saw
how to compute the Abel--Jacobi map via integration, and the application of
$\alpha$ amounts to matrix multiplication by $M$.  So the tricky aspect is in
computing the map $\Mum$, inverting the Abel--Jacobi map.  We will show in the
next subsections how to accomplish this task in a more robust way than by naive
inversion.

\subsection{Algorithms of Khuri-Makdisi}

Our method involves performing arithmetic in $J$, and for this purpose we use
algorithms developed by Khuri-Makdisi \cite{kkm-linalg}.  Let $D_0 \in
\Div(X)(\C)$ be a divisor of degree $d_0 > 2g$ on $X$.  By Riemann--Roch, every
class in $\Pic^0(X)(\C)$ is of the form $[D-D_0]$ where $D \in \Div(X)(\C)$ is
effective of degree $d_0$.  We represent the class $[D-D_0]$ by the subspace
\begin{equation}
    W_D \colonequals H^0(X,3 D_0-D) \subseteq V \colonequals H^0(X,3D_0).
\end{equation}
The divisor $D$ is usually not unique, hence neither is this representation of
a class in $\Pic^0(X)(\C)$ as a subspace of $V$. However, Khuri-Makdisi has
exhibited a method \cite[Proposition/Algorithm 4.3]{kkm-linalg} that, given as
input two subspaces $W_{D_1}$ and $W_{D_2}$ representing two classes in
$\Pic^0(X)(\C)$, computes as output a subspace $W_{D_3}$ corresponding to a
divisor $D_3$ such that $D_1 + D_2 + D_3 \sim 3 D_0$ by performing linear
algebra in the spaces $V$ and $V_2 \colonequals H^0(X,6 D_0)$.  In this way, we
can compute explicitly with the group law in $J$.

\begin{example}\label{exm:hyer_RR}
    Suppose $X$ is as in Example \ref{exm:hyperell}.  We find a basis for $V$ and
    $V_2$ as follows.  A natural choice for $D_0$ is $(g+1) \Infty$, where
    $\Infty=\pi^{-1}(\infty)$ is the preimage of $\infty \in \PP^1$ under the
    hyperelliptic map $x\colon X \to \PP^1$.  If $f$ has even degree, then
    $\Infty$ is the sum of two distinct points; if $f$ has odd degree, then
    $\Infty$ is twice a point.  In either case, the divisor $(g-1)\Infty$ is a
    canonical divisor on $X$, and $\deg \infty_X=2$; by Riemann--Roch for $m \geq
    g+1$ the space $H^0(X,m\Infty)$ has basis given by $1, x, \dots, x^m, y, xy,
    \dots, x^{m-g-1}y$.
\end{example}

In what follows, we represent functions in $V_2 \supsetneq V$ by their
evaluation at any $N > 6 d_0$ points of $X(\C)$ disjoint from the support of
$D_0$.

\subsection{Inverting the Abel--Jacobi map} \label{sec:invertabeljac}

Let $b \in \C^g/\Lambda$ correspond to a divisor class $[C] \in \Pic^0(X)(\C)$;
for example, $b=M \AJ(P)$ for $P \in X(\C)$ and $M$ representing a putative
endomorphism.  We now explain how to compute $\Mum(b)=\{Q_1,\dots,Q_g\}$ as in
\eqref{eqn_AJ}, under a genericity hypothesis.

If we start with arbitrary values for $Q_1,\dots,Q_g$, we can adjust these
points by Newton iteration until equality is satisfied to the desired
precision. However, there are no guarantees on the convergence of the Newton
iteration!

\medskip
\emph{Step 1: Divide the point and Newton iterate.}
Following Mascot \cite[\S 3.5]{mascot-palermo}, we first replace $b$ with a
point $b'$ very close to $0$ modulo $\Lambda$ and such that $2^m b' \equiv b
\psmod{\Lambda}$ for some $m \in \Z_{\geq 0}$.  For example, $b'$ may be
obtained by lifting $b$ to $\C^g$ and dividing the resulting vector by $2^m$.

As $b'$ is very close to $0$ modulo $\Lambda$, the equation \eqref{eqn_AJ}
should have a solution $\{Q_k'\}_j$ with $Q_k'$ close to $O_{0,k}$ for
$k=1,\dots,g$ since the derivative of the Abel--Jacobi map $\AJ$ at $O_0$ is
nonsingular by assumption.  We start with $Q_k' = O_{0,k}$ as initial guesses,
and then use Newton iteration until \eqref{eqn_AJ} holds to the desired
precision.  If Newton iteration does not seem to converge, we increase the
value of $m$ and start over.  The probability of success of the method
described above increases with $m$.  In practice, we found that starting with
$m = 10$ was a good compromise between speed and success rate.

In this way, we find points $Q_1',\dots,Q_g'$ such that the linear equivalence
\begin{equation}
    C \sim 2^m \left(\sum_{k=1}^g Q_k' - O_0 \right)
\end{equation}
holds in $\Div(X)^0(\C)$.

\medskip
\emph{Step 2: Recover the divisor by applying an adaptation of the
Khuri-Makdisi algorithm.}
From this, we want to compute $Q_1,\dots,Q_g$ such that
\begin{equation}
    C \sim \sum_{k=1}^g Q_k - O_0.
\end{equation}
For this purpose, we work with divisors and the algorithms of the previous
section.  But these algorithms only deal with divisor classes of the form
$[D-D_0]$ with $\deg D=d_0$ whereas we would like to work with $[\sum_{k=1}^g
Q_k' - O_0]$.  So we adapt the algorithms in the following way.

We choose $d_0-g$ auxiliary points $P_1, \dots, P_{d_0-g} \in X(\C)$ distinct
from the points $Q'_k$, the points $O_{0,k}$, and the support of $D_0$.
Consider the divisors
\begin{equation}
    \begin{aligned}
        D_+ &\colonequals \sum_{k=1}^g Q'_k + \sum_{k=1}^{d_0-g} P_k \\
        D_- &\colonequals O_0 + \sum_{k=1}^{d_0-g} P_k,
    \end{aligned}
\end{equation}
both effective of degree $d_0$.  We then compute the subspaces $W_{D_+}$ and
$W_{D_-}$ of $V$, and apply the subtraction algorithm of Khuri-Makdisi: we
obtain a subspace $W_{D'}$ corresponding to an effective divisor $D'$ such that
\begin{equation}
    D'-D_0 \sim \left(\sum_{k=1}^g Q_k' + \sum_{k=1}^{d_0-g} P_k \right)
    - \left(O_0 + \sum_{k=1}^{d_0-g} P_k \right)
    = \sum_{k=1}^g Q_k' - O_0.
\end{equation}
We then repeatedly use the doubling algorithm to compute $W_{D}$, where $D$ is
a divisor such that $D-D_0 \sim 2^m (D'-D_0)$. We have thus computed a subspace
$W_D$ such that
\begin{equation}
    D - D_0 \sim C \sim \sum_{k=1}^g Q_k - O_0. \label{eq:Qi1}
\end{equation}

To conclude, we recover the points $Q_1,\dots,Q_g$ from $W_D$ in a few more
steps.  We proceed as in Mascot \cite[\S 3.6]{mascot-palermo}.

\medskip
\emph{Step 3: Compute $E \sim \sum_k Q_k$.}
We apply the addition algorithm to $W_D$ and $W_{D_-}$ and negate the result.
(In fact, Khuri-Makdisi's algorithm computes these two steps in one.)  This
results in a subspace $W_\Delta$ where $\Delta$ is an effective divisor with
$\deg \Delta=d_0$ and
\begin{equation}
    \Delta-D_0 \sim (D_0 - D) + (D_0 - D_-).
\end{equation}
By \eqref{eq:Qi1}, we have
\begin{equation}
    \sum_{k=1}^g Q_k \sim E, \quad
    \text{where } E \colonequals
    2 D_0 - \Delta - \sum_{k=1}^{d_0-g} P_k \label{eq:Qi2}
\end{equation}
and $\deg(E)=g$.

\medskip
\emph{Step 4: Compute $Z=H^0(X,E)$.}  Next, we compute
\begin{equation}
    H^0(X,3 D_0 - \Delta) \cap H^0(X, 2 D_0)
\end{equation}
and the subspace $Z$ of this intersection of functions that vanish at all
$P_k$.  Generically, we have
\begin{equation}
    Z = H^0(X,E)
\end{equation}
and since $\deg(E)=g$, by Riemann--Roch we have $\dim Z \geq 1$.  The
genericity assumption may fail, but we can detect its failure by comparing the
(numerical) dimension of the resulting spaces with the value predicted by
Riemann--Roch, and rectify its failure by restarting with different auxiliary
points $P_k$.

\medskip
\emph{Step 5: Recover the points $Q_i$.}  Now let $z \in Z$ be nonzero; then
\begin{equation}\label{eq:divs}
    \opdiv z = Q-E
\end{equation}
where $Q$ is an effective divisor with $\deg Q=g$ and
\begin{equation}
    Q \sim \sum_{k=1}^g Q_k
\end{equation}
by \eqref{eq:Qi2}; as we are always working up to linear equivalence, we may
take $Q = \sum_{k=1}^g Q_k$ as desired.  To compute $\opdiv z$ and
circumnavigate the unknown divisor $\Delta$, we compute the subspace
\begin{equation}
    Z' \colonequals \big\{ v \in V : v W_{\Delta} \subseteq z V \big\}
\end{equation}
where $z V = H^0(X, 3 D_0 - \opdiv z)$ and $W_\Delta = H^0(X,3 D_0 - \Delta)$.
Since $3 D_0 - \Delta$ is basepoint-free (its degree exceeds $2g$), we conclude
that
\begin{equation}
    Z' = H^0\big(X,3 D_0 - \opdiv z - (3 D_0 - \Delta) \big)
    =
    H^0\left(X, 2 D_0 - \sum_{k=1}^{d_0-g} P_k  - \sum_{k=1}^g Q_k \right).
\end{equation}
We then recover the divisor $\sum_k P_k + \sum_k Q_k$ as the intersection of
the locus of zeros of the functions in $Z'$, and then the points $Q_k$
themselves whenever they are distinct from the chosen auxiliary points $P_k$.
Once more, this procedure works for generic input, and we can check if
we are in the generic case and rectify failure if this turns out not to be the
case.

\begin{example}
    In the case of a hyperelliptic curve, as in Example \ref{exm:hyer_RR} with
    $D_0 = (g+1) \infty_X$, the method described above leads us to
    \begin{equation}
        T = H^0\left(X, (2g+2) \infty_X - \textstyle{\sum}_{k=1}^{d_0-g} P_k
        - \textstyle{\sum}_{k=1}^g Q_k \right),
    \end{equation}
    which consists of functions which are linear combinations of $x^n$ and $x^n
    y$ for $n \in \Z_{\geqslant 0}$. These linear combinations thus describe
    polynomial equations that the coordinates of the points $P_k$ and $Q_k$ must
    satisfy, which allows us to recover the $Q_k$.
\end{example}

\begin{remark}
    Khuri-Makdisi's method relies only linear algebra operations in vector
    spaces of dimension $O(g \log g)$.  As we are working numerically, we must
    rely upon numerical linear algebra, and in our implementation we performed
    most of these operations by QR decompositions, a good trade-off between
    speed and stability. In practice, our loss of precision was at most 10
    precision bits per Jacobian operation.
\end{remark}

\subsection{Examples}

We now present two examples of the above approach.

\begin{example} \label{example:QMexample-IAJ}
    We return to Example \ref{example:QMexample-numeric}. Let $P_0 =\left(1,
    0\right)$ and $ P = (2, 5) $.
    Integrating, we find $ \AJ_{P_0}(P)  \equiv b
    \pmod{\Lambda}$ where
    \begin{equation}
        b \approx \left( 0.2525 , 1.475 \right),
    \end{equation}

    We now apply the methods of section \ref{sec:invertabeljac}. We arbitrarily
    set
    \begin{equation}
        \begin{aligned}
            O_{0,1} &= \left(0.9163 + 0.8483i,\,1.104 - 1.884i\right),\\
            O_{0,2} &= \left(0.3311 + 0.9656i,\,2.159 - 0.3835i\right).
        \end{aligned}
    \end{equation}
    The first step inverts the Abel--Jacobi map to obtain
    \begin{equation}
        2^{-10} M b = \AJ(\{Q_1' , Q_2'\} )
    \end{equation}
    where
    \begin{equation}
        \begin{aligned}
            Q_1' &\approx \left(0.9224 + 0.8521i,\,1.103 - 1.909i\right),\\
            Q_2' &\approx \left(0.3257 + 0.9592i,\,2.146 - 0.3645i\right).\\
        \end{aligned}
    \end{equation}
    The remaining steps (adapting the algorithms of Khuri-Makdisi) compute
    $Q_1$ and $Q_2$ such that
    \begin{equation}
        2^{10}[ Q_1' + Q_2' - O_{0,1} - O_{0,2}] = [Q_1 + Q_2 -  2 P_0],
    \end{equation}
    where
    \begin{equation}
        Q_k \approx \left(0.7500 \pm 0.4330i,\,-0.4419 \pm 0.7655i\right) .
    \end{equation}

    Using the LLL algorithm \cite{LLL}, we guess that the $x$-coordinates of
    $Q_1$ and $Q_2$ satisfy $4x^{2} - 6 x + 3=0$, and under this assumption we
    have
    \begin{equation}
        Q_k =
        \left(
            \frac{ 3 \pm i \sqrt{3}}{4}
            ,
            \frac{- 5\sqrt{2} \pm 5 i \sqrt{6}}{16}
        \right).
    \end{equation}
    All the computations above were performed with at least 600 decimal digits.
    On a standard desktop machine, figuring out right number of points for the
    Gauss--Legendre quadrature and calculating $b$ took less than $3$ CPU seconds,
    and the computation of the points $Q_1$ and $Q_2$ took around $2$ CPU minutes.
\end{example}

\begin{example} \label{exm:magmaproblem}
    The \Magma\ functions \texttt{ToAnalyticJacobian} and
    \texttt{FromAnalyticJacobian} provide us similar functionality. However, we
    have found these algorithms to be sometimes numerically unstable (in
    v2.22-6).

    For example, consider the curve with LMFDB label
    \href{http://www.lmfdb.org/Genus2Curve/Q/169/a/169/1}{\texttt{169.a.169.1}},
    a model for the modular curve $X_1 (13)$ with equation
    \begin{equation}
        X\colon y^2 = x^{6} + 4 x^{5} + 6 x^{4} + 2 x^{3} + x^{2} + 2 x + 1.
    \end{equation}
    We find a numerical endomorphism $\alpha$ with $\alpha^2=1$ defined over
    $\Q(\lambda)$ where $\lambda=2\cos(2\pi/13)$, with matrix
    \begin{equation}
        M = \frac{1}{13}\begin{pmatrix}
            -7\lambda^5 - 8\lambda^4 + 32\lambda^3 + 27\lambda^2 - 27\lambda - 10 &
            -5\lambda^5 - 2\lambda^4 + 21\lambda^3 + 10\lambda^2 - 10\lambda - 9 \\
            2\lambda^5 + 6\lambda^4 - 11\lambda^3 - 17\lambda^2 + 17\lambda + 1 &
            7\lambda^5 + 8\lambda^4 - 32\lambda^3 - 27\lambda^2 + 27\lambda + 10
        \end{pmatrix}.
    \end{equation}
    For a random point $P$, \Magma\ is unable to compute
    \begin{equation*}
        \texttt{FromAnalyticJacobian}\left(
            \alpha \cdot
        \texttt{ToAnalyticJacobian}(P, X), X\right)
    \end{equation*}
    in precision $600$. A workaround in this case is to replace $\alpha$ by
    $\alpha + 1$ instead; it is unclear why such a modification restores
    numerical stability (sometimes a change of variables in the equation also
    suffices).

    In comparison, if we set $P_0=(0,1)$, $P_1=(-1,1)$, and
    $O_{0}=\{ \infty_{+}, \infty_{-} \}$,
    then thanks to the above approach we can compute that
    \begin{equation}
        M \AJ_{P_0}(P_1) = \AJ(\{Q_1, Q_2\}),
    \end{equation}
    or in other words
    \begin{equation}
        \alpha([P_1-P_0]) = [Q_1+Q_2-\infty_+-\infty_-],
    \end{equation}
    where
    \begin{equation}
        \begin{aligned}
            Q_1& \approx (-1.3772, 1.8730), \\
            Q_2& \approx (2.6511, 34.8995). \\
        \end{aligned}
    \end{equation}
    With 600 decimal digits of accuracy, the computation takes about $1$ CPU
    minute.

    The LLL algorithm then suggests that
    \begin{equation}
        \begin{aligned}
            Q_1& = (\theta^2+2\theta-2, \ 11 \lambda^5+18\lambda^4
            -43\lambda^3-66\lambda^2+26\lambda+33), \\
            Q_2& = (-\theta^2-\theta+3, \ -6\lambda^5+6\lambda^4
            +31\lambda^3-19\lambda^2-21\lambda+5), \\
        \end{aligned}
    \end{equation}
    where $\theta = \lambda^{5} - 5 \lambda^{3} + 6 \lambda$, which holds to at least 500 decimal places.

\end{example}

\begin{remark}
    In the example above, it is surprising that $Q_1$ and $Q_2$ are both
    defined (instead of being conjugate) over $\Q(\lambda)$ and that their
    $x$-coordinates are defined over the subfield $\Q(\theta)$. This happens
    because $\alpha$ turns out to be induced by a modular (sometimes called a
    \emph{Fricke}) involution of $X_1(13)$ (to be precise, the one attached to
    the root of unity $e^{8 \pi i / 13}$), and because $X_1(13)(\Q)$ only
    contains cusps, so that $P_0$, $P_1$, $\infty_+$, $\infty_-$ and thus $Q_1$
    and $Q_2$ are cusps.
\end{remark}

\section{Newton lift}
\label{sec:newton}

In the previous section, we showed how one can numerically compute the
composite map
\[     \alpha_X
    \colon
    X \xrightarrow{\AJ_{P_0}} J \xrightarrow{\alpha}
    J \xdashrightarrow{\Mum} \Sym^g(X). \]
given $\alpha \in \End(J_\C)$. As explained in the introduction, by
interpolation we can then fit a divisor $Y \subset X \times X$ representing the
graph of the numerical endomorphism $\alpha$. When this divisor is defined over
a number field and the induced homomorphism on differentials as in Smith
\cite[\S 3.5]{smith-thesis} is our given tangent matrix, then we have
successfully verified the existence of the corresponding endomorphism. In this
section---one that can be read as a warmup for the next section or as a hybrid
method---we only use numerical approximation for a single point, after which we
use a Newton lift to express the endomorphism in a formal neighborhood.

\subsection{Setup}

We retain the notation of the previous section.  We further suppose that the
base point $P_0 \in X(K)$ and origin divisor $O_0=\sum_{i=1}^g O_{0,i} \in
\Div^0(X)(K)$ are defined over a finite extension $K \supseteq F$.  Enlarging
$K$ further if necessary, we choose $P \in X(K)$ distinct from $P_0$ and
suppose (as computed in the previous section, or another way) that we are given
points $Q_1,\dots,Q_g \in X(K)$ such that numerically we have
\begin{equation}
    \alpha_X(P) = \{Q_1,\dots,Q_g\}.
\end{equation}
Moreover, possibly enlarging $K$ again, we may assume the matrix $M$
representing the action of $\alpha$ on differentials has entries in $K$.

For concreteness, we will exhibit the method for the case of a hyperelliptic
curve; we restore generality in the next section.  Suppose $X\colon y^2=f(x)$ is
hyperelliptic as in Example \ref{exm:hyperell}.  Let $t \colonequals x-x(P)$;
we think of $t$ as a formal parameter.  We further assume that $t$ is a
\emph{uniformizer} at $P$: equivalently, $f(x(P)) \neq 0$, i.e., $P$ is
\emph{not} a Weierstrass point. Since $X$ is smooth at $P$, there exists a lift
of $P$ to a point $\widetilde{P} \in X(K[[t]])$ with
\begin{equation}
    \begin{split}
        x(\widetilde{P}) &= x(P)+t = x \\
        y(\widetilde{P}) &= y(P)+O(t).
    \end{split}
\end{equation}
We can think of $\widetilde{P}$ as expressing the expansion of the coordinates
$x,y$ with respect to the parameter $t$.  Indeed, we have
\begin{equation}
    y(\widetilde{P})=\sqrt{f(x(P)+t)} \in K[[t]]
\end{equation}
expanded in the usual way, since $f(x(P))\neq 0$ and the square root is
specified by $y(\widetilde{P})=y(P)+O(t)$.  Alternatively, we can think of
$\widetilde{P}$ as a formal neighborhood of $P$.

The Abel--Jacobi map, the putative endomorphism $\alpha$, and the Mumford map
extend to the ring $K[[t]]$.  By a lifting procedure, we will compute points
$\widetilde{Q}_1,\dots,\widetilde{Q}_g \in X(K[[t]])$ to arbitrary $t$-adic
precision such that
\begin{equation}
    \alpha_X\big(\widetilde{P}\big)
    =
    \big\{\widetilde{Q}_1,\dots,\widetilde{Q}_g\big\}
\end{equation}
with
\begin{equation}
    x\big(\widetilde{Q}_j\big) = x(Q_j) + O(t).
\end{equation}
We then attempt to fit a divisor $Y \subset X \times X$ defined over $K$ to the
point $\big\{(\widetilde{P},\widetilde{Q}_j)\}_{j}$, and proceed as before. The
only difference is that the divisor now interpolates this single infinitesimal
point instead of many points of the form $\big(R,\alpha_X(R)\big)$ with $R \in
X(\C)$.

\subsection{Lifting procedure}

For a generic choice of $P$, we may assume that $y(Q_j) \neq 0$ for all $j$ and
that the values $x(Q_j)$ are all distinct.  In practice, we may also keep $P$
and simply replace $\alpha \leftarrow \alpha+m$ with small $m \in \Z$ to
achieve this.

Let $x_j(t) \colonequals x(\widetilde{Q}_j)$.  The fact that the matrix
$M=(m_{ij})_{i,j}$ describes the action of $\alpha$ on the $F$-basis of
differentials $x^j \d x/y$ implies (by an argument described in detail in the
next section) that
\begin{equation}  \label{eqn:j1gxjM}
    \sum_{j=1}^g \frac{x_j^k \d x_j}{\sqrt{f(x_j)}}
    = \left( \sum_{j=0}^{g-1} m_{ij} x^j \right) \frac{\d x}{\sqrt{f(x)}}
\end{equation}
for all $k = 0, \dots, g-1$. In this equation, the branches of the square roots
are chosen so that $\sqrt{f(x)} = y(P) + O(t)$ and that $\sqrt{f(x_j)} = y(Q_j)
+ O(t)$ for all $j$.  Dividing by $\d x = \d t$, \eqref{eqn:j1gxjM} can be
rewritten in matrix form:
\begin{equation} \label{eqn:WD40}
    WD x' = \frac{1}{\sqrt{f(x)}} M w
\end{equation}
where
\begin{equation}
    \begin{aligned}
        W &\colonequals
        \begin{pmatrix}
            1 & \cdots & 1 \\
            x_1 & \cdots & x_g \\
            \vdots & \ddots & \vdots \\
            x_1^{g-1} & \cdots & x_g^{g-1}
        \end{pmatrix}, \\
        D &\colonequals \diag\big(\textstyle{\sqrt{f(x_1)}^{\,-1},\dots,
        \sqrt{f(x_g)}^{\, -1}}\big), \\
        x' &\colonequals \big(\!\d x_1/\d t, \ldots, \d x_g/\d t \big)^{\textsf{T}},
        \text{ and} \\
        w &\colonequals \big(1, x, \ldots,x^{g-1}\big)^{\textsf{T}},
    \end{aligned}
\end{equation}
where $\textsf{T}$ denotes the transpose. Since the values $x(Q_j) \in K$ are
all distinct, the Vandermonde matrix $W$ is invertible over $K[[t]]$.
Therefore, equation \eqref{eqn:WD40} allows us to solve for $x'$:
\begin{equation} \label{eqn:xpDW}
    x' = \frac{1}{\sqrt{f(x)}} D^{-1} W^{-1} M w.
\end{equation}
In practice, we use \eqref{eqn:xpDW} to solve for the series $x_j(t) \in
K[[t]]$ iteratively to any desired $t$-adic accuracy: if they are known up to
precision $O(t^n)$ for some $n \in \Z_{\geq 1}$, we may apply the identity
\eqref{eqn:xpDW} and integrate to get the series up to $O(t^{n+1})$.

\begin{example} \label{example:QMexample-Newton}
    We return to Example \ref{example:QMexample-IAJ}, and take $P=(2, 5)$ a
    non-Weierstrass point.
    We obtain
    \begin{equation}
        x_j(t) =
        \frac{1}{4} \bigl(3 \pm i \sqrt{3}\bigr)
        +
        \frac{1}{12} i \bigl(\sqrt{3} \pm 3 i\bigr) t
        +
        \frac{1}{144} \bigl(9 \mp 11 i \sqrt{3}\bigr) t^2
        +
        \frac{\pm 5 i}{36 \sqrt{3}} t^3
        + O\left(t^4\right)
        ,
    \end{equation}
    where $t=x - 2$ is a uniformizer at $P$.
    Taking advantage of the evident
    symmetry of $x_1, x_2$, we find
    \begin{equation} \label{eqn:x1x2t}
        x_1(t) + x_2(t) = \frac{4 t+6}{(t+2)^2}
        ,
        \quad
        x_1(t)x_2(t) = \frac{2 t+3}{(t+2)^2}.
    \end{equation}
    Thus
    \begin{equation}
        x_j(t) =
        \frac{2 t + 3 \pm i(t+1)\sqrt{2 t+3}}{(t+2)^2}.
    \end{equation}

    In Section~\ref{sec:fitdivisor} we will tackle the problem how to certify
    that $\alpha$ is indeed an endomorphism, and that the rational functions
    \eqref{eqn:x1x2t} are correct: see Example \ref{exm:certrunexample}.

    Here is another way: for genus 2 curves we have an upper bound for the
    degrees of $x_1(t) + x_2(t)$ and $x_1(t)x_2(t)$ as rational functions,
    given by
    \begin{equation}
      d: = \tr(\alpha \alpha^\dagger) = \tr(R J R^{\textsf{T}} J^{-1}) / 2 =
        \langle \alpha(\Theta), \Theta \rangle,
    \end{equation}
    where $\dagger$ denotes the Rosati involution and $J$ is the standard
    symplectic matrix; see van Wamelen \cite[\S 3]{vanwamelen-cm} for more
    details and Remark~\ref{rmk:daggerpi2} for a possible generalization to
    higher genus. Therefore,  to deduce the pair $(x_1(t) + x_2(t),
    x_1(t)x_2(t))$ it is sufficient to compute $x_j(t)$ up to precision $O(t^{2
    d + 1})$. Furthermore, we may sped up the process significantly by doing
    this modulo many small primes and applying a version of the Chinese
    remainder theorem with denominators (involving LLL).

    In this example, we have $d = 4$ and we deduced the pair $(x_1(t) + x_2(t),
    x_1(t)x_2(t))$ modulo 131 62-bit primes that split completely in
    $\Q(\sqrt{2}, \sqrt{-3})$ by computing $x_j (t)$ up to precision
    $O(t^{9})$. All together deducing $(x_1(t) + x_2(t), x_1(t)x_2(t))$ given
    $(Q_1, Q_2)$ took less than 5 CPU seconds on a standard desktop machine.

    (A third possible way to certify $\alpha$ using \eqref{eqn:x1x2t} is by
    following van Wamelen's approach \cite[\S 9]{vanwamelen-cm}.)
\end{example}

\section{Puiseux lift} \label{sec:puiseux}

In the previous section, we lifted a single computation of $\alpha_X(P) =
\sum_{j=1}^g Q_j - O_0$ to a formal neighborhood.  In this section, we show how
one can dispense with even this one numerical computation to obtain an exact
certification algorithm for the matrix of a putative endomorphism.

\subsection{Setup}

We continue our notation but restore generality, once more allowing $X$ to be a
general curve.  We may for example represent $X$ by a plane model that is
smooth at $P_0$ (but possibly with singularities elsewhere).  Let $P_0 \in
X(K)$ and let $M \in \M_g(K)$ be the tangent representation of a putative
endomorphism $\alpha$ on an $F$-basis of $H^0 (X, \omega_X)^*$.

We now make the additional assumption that $P_0$ is \emph{not} a Weierstrass
point. Then by Riemann--Roch, the map
\begin{align}\label{eq:iso_at_P0}
    \begin{split}
        \Sym^g (X) & \to J \\
        \{Q_1, \dots , Q_g\} & \mapsto \sum_{j=1}^g (Q_j - P_0)
    \end{split}
\end{align}
is locally an isomorphism around $\{P_0, \dots , P_0\}$, in the sense that it
is a birational map that restricts to an isomorphism in a neighborhood of said
point.

Let $x \in F(X)$ be a local parameter for $X$ at $P_0$.  Then $x \colon X \to
\PP^1$ is also a rational function, and we use the same symbol for this map.
Since $X$ is smooth at $P_0$, we obtain a canonical point $\widetilde{P}_0 \in
X (F [[x]])$ such that:
\begin{enumerate}
    \item[(i)] $\widetilde{P}_0$ reduces to $P_0$ under the reduction map
        $X(F[[x]]) \to X(F)$, and
    \item[(ii)] $x (\widetilde{P}_0) = x \in F[[x]]$.
\end{enumerate}
On an affine open set $U \ni P_0$ of $X$ with $U$ embedded into affine space
over $F$, we may think of $\widetilde{P}_0$ as providing the local expansions
of the coordinates at $P_0$ in the local ring at $P_0$.

Since \eqref{eq:iso_at_P0} is locally an isomorphism at $P_0$, we can locally
describe $\alpha_X(\widetilde{P}_0)$ uniquely as
\begin{equation}
    \alpha_X(\widetilde{P}_0)
    =
    \{\widetilde{Q}_1,\dots,\widetilde{Q}_g\} \in \Sym^g(X)(F[[x]]) .
\end{equation}
The reduction to $F$ of $\{\widetilde{Q}_i\}_i$ is the $g$-fold multiple
$\{P_0,\dots,P_0\} \in \Sym^g(X)(F)$. The map $X^g \to \Sym^g (X)$ is ramified
above $\{P_0, \dots , P_0\}$, so in general we cannot expect to have
$\widetilde{Q}_i \in X(F[[x]])$. Instead, consider the generic fiber of the
point $\{\widetilde{Q}_i\}_i$, an element of $\Sym^g(X)(F((x)))$; this generic
fiber lifts to a point of $X^g$ defined over some finite extension of $F
((x))$.  Since $\opchar F =0$, the algebraic closure of $F((x))$ is the field
$F^{\alg}((x^{1/\infty}))$ of Puiseux series over $F^{\alg}$. Since $X$ is
smooth at $P_0$, the lift of $\{\widetilde{Q}_i\}_i$ is even a point on $X^g$
over the ring of integral Puiseux series $F^{\alg}[[x^{1/\infty}]]$.

In other words, if we allow ramification (fractional exponents) in our formal
expansion, we can deform the equality $\alpha_X(P_0)=\{P_0,\dots,P_0\}$ to a
formal neighborhood of $P_0$.

\subsection{Lifting procedure}

The lifting procedure to obtain this deformation algorithmically is similar to
the one outlined in the previous section; here we provide complete details. For
$i=1,\dots,g$, let
\begin{equation}
    \omega_i = f_i \d x
\end{equation}
be an $F$-basis of $H^0(X,\omega_X)$ with $f_i \in F(X)$.  The functions $f_i$
are by definition regular at $P_0$, so they admit a power series expansion
$f_i(x) \in F[[x]]$ in the uniformizing parameter $x$.  Because $P_0$ is not a
Weierstrass point, we may without loss of generality choose $\omega_i$ in row
echelonized form, i.e., so that
\begin{equation} \label{eqn:omegai}
    \omega_i = ( x^{i-1} + O(x^i) ) \d x
\end{equation}
for $i=1,\dots,g$.  (If it is more convenient, we may even work with a full
echelonized basis.)

For $j=1,\dots,g$, let
\begin{equation}
    x_j = x(\widetilde{Q}_j) \in F^{\alg}[[x^{1/\infty}]]
\end{equation}
be the $x$-coordinates of the points $\widetilde{Q}_j$ on the graph of $\alpha$
above $\widetilde{P}$.

\begin{proposition}\label{prop:pullpush}
    Let $\left\{ \omega_1, \dots , \omega_g \right\}$ be a basis of $H^0 (X,
    \omega_X)$, with $\omega_i = f_i \d x$ around $P_0$.
    Let $M = (m_{i,j})_{i,j}$ be the tangent representation of $\alpha$ with
    respect to the dual of this basis. Then we have
    \begin{equation}\label{eq:lift}
        \sum_{j=1}^g f_i (x_j) \d x_j
        =
        \sum_{j=1}^g m_{i,j} f_j (x) \d x
        \quad \text{ for all $i = 1, \dots , g$.}
    \end{equation}
\end{proposition}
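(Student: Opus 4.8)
The plan is to trace through the definition of $\alpha_X$ in \eqref{eqn:alphastar} at the level of differential forms, and to pull back the defining relation \eqref{equation:analytic_geometric} of the tangent representation $M$ via the Abel--Jacobi map. The key observation is that the map $\AJ_{P_0}\colon X \to J$ induces, on cotangent spaces, the identification $H^0(J,\Omega^1_J) \xrightarrow{\sim} H^0(X,\omega_X)$ sending the $i$-th coordinate differential (dual to the $i$-th basis vector of $H^0(X,\omega_X)^*$) to $\omega_i$; this is just the statement that, analytically, $\AJ_{P_0}$ is $P \mapsto (\int_{P_0}^P \omega_i)_i$, so $\AJ_{P_0}^* (dz_i) = \omega_i$ where $z_1,\dots,z_g$ are the linear coordinates on $\C^g \supset \Lambda$. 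Since $M$ represents $\alpha$ on the tangent space $H^0(X,\omega_X)^*$ by left multiplication, its transpose acts on $H^0(X,\omega_X)$, i.e. $\alpha^*\omega_i = \sum_{j} m_{i,j}\,\omega_j$ as global differentials on $J$ (this is exactly the content of \eqref{equation:analytic_geometric}, read on differentials rather than on homology).

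First I would set up the two maps fitting into \eqref{eqn:alphastar}: write $s\colon \Sym^g(X) \dashrightarrow J$ for the Abel--Jacobi map $\{Q_1,\dots,Q_g\} \mapsto \sum_j (Q_j - O_0)$ (with $\Mum$ its rational inverse) and $\AJ = \AJ_{P_0}$. By the see-saw/summation description of $s$, on differentials we have $s^*\omega_i = \sum_{j=1}^g \mathrm{pr}_j^*\omega_i$ on $\Sym^g(X)$ pulled back to $X^g$, where $\mathrm{pr}_j$ is the $j$-th projection; concretely $s^*(dz_i) = \sum_{j=1}^g f_i(x_j)\,dx_j$ in the local coordinates $x_j = x(\widetilde Q_j)$. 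Next, the map $\alpha_X = \Mum \circ \alpha \circ \AJ_{P_0}$ satisfies $\alpha_X^* \circ s^* = \AJ_{P_0}^* \circ \alpha^*$ (because $\Mum$ is a section of $s$, so $s \circ \alpha_X = \alpha \circ \AJ_{P_0}$ as rational maps $X \dashrightarrow J$). Applying both sides to $dz_i$ and using the two computations above gives
\begin{equation*}
    \alpha_X^*\!\left( \sum_{j=1}^g f_i(x_j)\,dx_j \right)
    = \AJ_{P_0}^*\!\left( \sum_{j=1}^g m_{i,j}\,dz_j \right)
    = \sum_{j=1}^g m_{i,j}\, f_j(x)\,dx .
\end{equation*}
Finally, since $\widetilde P_0 \in X(F[[x]])$ is the tautological point with $x(\widetilde P_0) = x$ and $\alpha_X(\widetilde P_0) = \{\widetilde Q_1,\dots,\widetilde Q_g\}$, evaluating the composite $\alpha_X$ at $\widetilde P_0$ is precisely the substitution $x_j = x_j(x) \in F^{\alg}[[x^{1/\infty}]]$, so the left-hand side becomes $\sum_j f_i(x_j)\,dx_j$ as a Puiseux differential, which is \eqref{eq:lift}.

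I expect the main obstacle to be making the functoriality step $\alpha_X^* \circ s^* = \AJ_{P_0}^* \circ \alpha^*$ rigorous at the level of \emph{rational} maps evaluated at a formal/Puiseux point, rather than as an identity of honest morphisms of smooth varieties. The subtlety is that $\Mum$ is only a rational map and $\alpha_X$ is defined only on a dense open set; one must check that $\widetilde P_0$ (and hence $\widetilde Q_j$) lands in the locus where all the maps in \eqref{eqn:alphastar} are defined and where $s$ is étale, which is guaranteed by the hypothesis that $P_0$ is not a Weierstrass point together with \eqref{eq:iso_at_P0}. Once one knows the relevant points avoid the indeterminacy and ramification loci, pullback of differentials is a purely local (hence formal) operation and commutes with composition of the (now regular) maps, so the displayed identity of Puiseux-series differentials follows. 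The echelonized normalization \eqref{eqn:omegai} plays no role in the proof itself; it is only there so that the subsequent lifting recursion can be run triangularly.
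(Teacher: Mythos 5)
Your argument is correct and is essentially the paper's own (second, ``alternative'') proof: you pull back the identity $\alpha([\widetilde{P}_0-P_0])=[\sum_j \widetilde{Q}_j - gP_0]$ through the Abel--Jacobi and summation maps, using $\AJ_{P_0}^*(dz_i)=\omega_i$, $\alpha^*(dz_i)=\sum_j m_{i,j}\,dz_j$, and functoriality of pullback, exactly as in the paper, just with the intermediate maps made more explicit. The only nitpick is notational: in the Puiseux setting the base divisor of the map $\Sym^g(X)\dashrightarrow J$ is $gP_0$ rather than $O_0$, which is immaterial since translation by a constant does not affect the pullback of invariant differentials.
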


\begin{proof}
    This is essentially proven by Smith \cite[\S3.5]{smith-thesis}. Let $Y$ be
    the divisor corresponding to $\alpha$, and let $\pi_1$ and $\pi_2$ be the two
    projection maps from $Y$ to $X$. Then $\alpha^* = (\pi_2)_* \pi_1^*$ (see
    \emph{loc.\ cit.}), which in an infinitesimal neighborhood of $P_0$ becomes
    \eqref{eq:lift}.

    An alternative argument is as follows. By construction, we have
    \begin{equation}\label{eq:pullpush1}
        \sum_{j = 1}^g (\widetilde{Q}_j - P_0) = \alpha (\widetilde{P}_0 - P_0) .
    \end{equation}
    On the tangent space, addition on the Jacobian induces the usual addition.
    Considering both sides of \eqref{eq:pullpush1} over
    $F^{\alg}[[x^{1/\infty}]]$ and substituting the resulting power series in the
    differential form $\omega_i$, we obtain
    \begin{equation}
        \sum_{j=1}^g x_j^* (\omega_i)
        =
        x^* (\alpha^* (\omega_i)))
        \quad \text{ for all $i = 1, \dots , g$,}
    \end{equation}
    which also yields \eqref{eq:lift}.
\end{proof}

We iteratively solve \eqref{eq:lift} as follows.  We begin by computing initial
expansions
\begin{equation}
    x_j = c_{j,\nu} x^{\nu} + O(x^{\nu+1/e})
\end{equation}
where
\begin{equation} \label{eqn:computenu}
    \nu \colonequals \min_{i,j}(\{j/i : m_{i, j} \neq 0\}) \in \Q_{>0},
\end{equation}
and where $e$ is the denominator of $\nu$. Note that $\nu$ is well-defined
since the matrix $M$ has full rank; typically, but not always, we have
$\nu=1/g$. Combining the notation above with \eqref{eqn:omegai} we obtain
\begin{equation}
    \begin{aligned}
        x f_i(x_j) \d x_j
        &= ((c_{j,\nu}x^{\nu})^{i-1} + O(x^{i\nu}))\left(\nu c_{j,\nu}x^{\nu}
    + O(x^{\nu+1/e})\right) \d x \\
    &= (\nu c_{j,\nu}^ix^{i\nu} + O(x^{i\nu+1/e}))\d x.
\end{aligned}
\end{equation}
Inspecting the leading terms of \eqref{eq:lift} for each $i$ we obtain
\begin{equation}
    \sum_{j = 1} ^g (\nu c_{j,\nu}^i x^{i\nu} + O(x^{i\nu + 1/e}))\d x
    =
    \sum_{j=1}^g m_{i,j}(x^j + O(x^{j+1}))\d x ,
\end{equation}
therefore for all $i$ we have
\begin{equation} \label{eqn:defnu}
    \nu \sum_{j=1}^g c_{j,\nu}^i = m_{i,i\nu},
\end{equation}
where $m_{i,i\nu}=0$ if $i\nu \not\in \Z$. The equations \eqref{eqn:defnu} are
symmetric under the action of the permutation group $S_g$, and up to this
action there is a unique nonzero solution by Newton's formulas, as $m_{i,i\nu}
\neq 0 $ for some $i$.

The equations \eqref{eqn:defnu} are of different degree with respect to the
leading terms $c_{j,\nu}$.  Therefore, replacing $\alpha$ by $\alpha+m$ with $m
\in \Z$ will eventually result in a solution with distinct $c_{j,\nu}$.  For
purposes of rigorous verification it is the same to verify $\alpha$ as it is
$\alpha+m$, so we may suppose that the values $c_{j,\nu}$ are distinct.

Having determined the expansions
\begin{equation}
    x_j = c_{j,\nu}x^{\nu} + c_{j,\nu+1/e}x^{\nu+1/e} + \dots
    + c_{j,\nu+n/e}x^{\nu+n/e} + O(x^{\nu+(n+1)/e})
\end{equation}
for $j=1,\dots,g$ up to some precision $n \geq 1$, we integrate \eqref{eq:lift}
to iteratively solve for the next term in precision $n+1$. As at the end of the
previous section, we then introduce new variables $c_{j,\nu+(n+1)/e}$ for the
next term and consider the first coefficients on the left hand side of the
equations \eqref{eq:lift} in which these new variables occur. Because of our
echelonization and the presence of the derivative $\d x_j$, the exponents of $x$
for which these coefficients occur are
\begin{equation}
    \nu - 1 + (n+1)/e, 2 \nu - 1 + (n+1)/e, \dots, g \nu - 1 + (n+1)/e.
\end{equation}
We obtain an inhomogeneous linear system in the new variables whose homogeneous
part is described by a Vandermonde matrix in $c_{1,\nu}, \dots, c_{g,\nu}$.
This system has a unique solution since we have ensured that the latter
coefficients are distinct. The Puiseux series $x_j = x(\widetilde{Q}_j)$ for
each $j$ then determines the point $\widetilde{Q}_j$ because we assumed $x$ to
be a uniformizing element.

\begin{remark}
    In practice, we iterate the approximations $x_j$ by successive Hensel
    lifting. Indeed, let $F_i$ be the formal integral of the function $f_i$,
    and let $F$ be the multivariate function $(F_1, \dots, F_g)$. Then the
    equation \eqref{eq:lift} is equivalent to solving for $x_1, \dots, x_g$ in
    \begin{equation}
        F (x_1, \dots , x_g)
        =
        \left(\sum_{j = 1}^g m_{1,j} F_i (x), \dots,
        \sum_{j = 1}^g m_{g,j} F_g (x)\right).
    \end{equation}
    Our initialization is a sufficiently close approximation for the Hensel
    lifting process to take off.
\end{remark}

\begin{example}
    We compute Example \ref{example:QMexample-Newton} again, but starting afresh
    with just the matrix $M = \begin{pmatrix} 0 & \sqrt{2} \\ \sqrt{2} & 0
    \end{pmatrix}$ and the point $P_0 = (0,\sqrt{-1})$.  In order to be able to
    display our results, we work modulo a prime above $4001$ in
    $K = \Q(\sqrt{-1},\sqrt{2})$. We first expand
    \begin{equation}
        \widetilde{P}_0 = (x, 3102+247x+1714x^2+2082x^3+1505x^4+O(x^5)).
    \end{equation}
    By \eqref{eqn:computenu}, we have $\nu=1/2$.  The equations \eqref{eqn:defnu}
    read:
    \begin{equation}
        \begin{aligned}
            c_{1,1/2}+c_{2,1/2} &= 2m_{1,1/2}=0 \\
            c_{1,1/2}^2+c_{2,1/2}^2 &= 2m_{2,1} = 2\sqrt{2}
        \end{aligned}
    \end{equation}
    so $c_{2,1/2}=-c_{1,1/2}$ and $c_{1,1/2}^2 = \sqrt{2}$, giving
    \begin{equation}
        c_{1,1/2} \equiv 2559 \psmod{4001},
        \quad c_{2,1/2} \equiv -2559 \equiv 1442 \psmod{4001}.
    \end{equation}
    Now iteratively solving the differential system \eqref{eq:lift}, we find
    \begin{equation}
        \begin{aligned}
            \widetilde{Q_1} &= (2559x^{1/2} + 1445x + 2635x^{3/2} + O(x^2), \\
                            &\qquad 3102 + 3916x^{1/2} + 3938x + 1271x^{3/2} + O(x^2)) \\
            \widetilde{Q_2} &= (1442x^{1/2} + 1445x + 1366x^{3/2} + O(x^2), \\
                            &\qquad 3102 + 85x^{1/2} + 3938x + 2730x^{3/2} + O(x^2)).
        \end{aligned}
    \end{equation}
    We use these functions directly to interpolate a divisor in
    the next section (and we also consider the Cantor representation, involving
    in particular their symmetric functions).
\end{example}

\section{Proving correctness}
\label{sec:fitdivisor}

The procedures described in the previous sections work unimpeded for any matrix
$M$, including those that do \emph{not} correspond to actual endomorphisms. In
order for $M$ to represent an honest endomorphism $\alpha \in \End(J_K)$, we
now need to fit a divisor $Y \subset X \times X$ representing the graph of
$\alpha$.

\subsection{Fitting and verifying}

We now proceed to fit a divisor to either the points computed numerically or
the Taylor or Puiseux series in a formal neighborhood computed exactly.  The
case of numerical interpolation was considered by Kumar--Mukamel
\cite{kumar-mukamel}, and the case of Taylor series is similar, so up until
Proposition~\ref{prop:verify} below we focus on our infinitesimal versions.

Let $\pi_1, \pi_2 \colon X \times X \to X$ be the two projection maps. If the
matrix $M$ corresponds to an endomorphism, then the divisor $Y$ traced out by
the points $(\widetilde{P}_0,\widetilde{Q}_j)$ has degree $g$ with respect to
$\pi_1$ and degree $d$ with respect to $\pi_2$ for some $d \in \Z_{\geq 1}$.
Accordingly, we seek equations defining this divisor.

Choose an affine open $U \subset X$, with a fixed embedding into some ambient
affine space. We then try to describe $D \subset U \times U$ by choosing degree
bounds $n_1,n_2 \in \Z_{\geq 1}$ (with usually $n_2 = g$) and considering the
$K$-vector space $K[U \times U]_{\leq(n_1, n_2)}$ of regular functions on $U
\times U$ that are of degree at most $n_1$ when considered as functions on $U
\times \left\{ P_0 \right\}$ and degree at most $n_2$ on $\left\{ P_0 \right\}
\times U$. Let $N \colonequals \dim_K K[U \times U]_{\leq (n_1, n_2)}$ be the
dimension of this space of functions.  We then develop the points
$(\widetilde{P}_0, \widetilde{Q}_j)$ to precision $N + m$ for some suitable
global margin $m \geq 2$, and compute the subspace $Z \subseteq K[U \times
U]_{\leq (n_1 , n_2)}$ of functions that annihilates all these points to the
given precision. If $M$ is the representation of an actual endomorphism, then
we will in this way eventually find equations satisfied by $(\widetilde{P}_0,
\widetilde{Q}_j)$ by increasing $n_1,n_2$.

We show how to verify that a putative set of such equations is in fact correct.
Assume that $Z$ contains a nonzero function (on $U \times U$), and let $E$ be
the subscheme of $X \times X$ defined by the vanishing of $Z$.

\begin{proposition}\label{prop:verify}
    Suppose that the second projection $\pi_2$ maps $E$ surjectively onto $X$
    and that the intersection of $E$ with $\left\{ P_0 \right\} \times X$
    consists of a single point with multiplicity $g$. Then $M$ defines an
    endomorphism of $J_K$.
\end{proposition}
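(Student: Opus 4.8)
The plan is to extract from $E$ an honest correspondence on $X$, show that it induces an endomorphism $\beta\in\End(J_K)$, and then invoke Proposition~\ref{prop:pullpush} to recognize $M$ as the tangent representation of $\beta$. Hypothesis (ii) is exactly what forces the local structure of $E$ at $P_0$ to match the computed Puiseux data.

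First I would pin down the geometry of $E$. Since $E$ is the vanishing locus of a nonzero subspace $Z$, it is a proper closed subscheme of the surface $X\times X$, so $\dim E\leq 1$; as $E$ contains $(P_0,P_0)$ (every function of $Z$ vanishes on each branch $(\widetilde P_0,\widetilde Q_j)$, which reduces to $(P_0,P_0)$), hypothesis (i) forces $\dim E\geq 1$, so $E$ is a curve and $E\cap(\{P_0\}\times X)$ is, by hypothesis (ii), exactly the single point $(P_0,P_0)$ with length $g$. Let $Y\subseteq E$ be the union, with reduced structure, of the irreducible components on which $\pi_1$ is dominant. Any component of $E$ not contained in $Y$ is vertical, of the form $\{P\}\times C$ with $P\neq P_0$ (were $P=P_0$, then $E\cap(\{P_0\}\times X)$ would be positive-dimensional), so $Y\cap(\{P_0\}\times X)=E\cap(\{P_0\}\times X)$ is the length-$g$ point $(P_0,P_0)$. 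Since $Y$ is a reduced curve and $X$ is smooth of dimension $1$, the finite morphism $\pi_1|_Y$ is flat, so $\deg(\pi_1|_Y)$ equals the length of this fibre, namely $g$. Combining this with the precision margin built into the definition of $Z$ --- chosen, as $m$ is increased, large enough that no nonzero degree-$(n_1,n_2)$ function can vanish on a branch to order $\geq N+m$ without vanishing on it identically --- identifies $Y$ with the Zariski closure of the branches $(\widetilde P_0,\widetilde Q_j)$ and shows that $\pi_1|_Y$ carries no sheets beyond these. Passing to the normalization, we obtain a disjoint union of smooth curves $\widetilde Y$, defined over $K$, with finite maps $p_1,p_2\colon\widetilde Y\to X$ and $\deg p_1=g$.

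Next I would build the endomorphism. The assignment $P\mapsto\bigl[(p_2)_*p_1^*(P)-(p_2)_*p_1^*(P_0)\bigr]$ defines a morphism $X\to J=\Pic^0(X)$ sending $P_0$ to $0$; by the universal property of the Jacobian it factors through $\AJ_{P_0}$ as $X\xrightarrow{\AJ_{P_0}}J\xrightarrow{\beta}J$ for a unique homomorphism of abelian varieties $\beta$, and since $Y$, $\pi_1$, $\pi_2$ are defined over $K$ we have $\beta\in\End(J_K)$. Now $Y$ is the graph of the rational map $X\dashrightarrow\Sym^g(X)$ attached to $\beta$, and by the previous paragraph the fibre of $Y$ over the formal point $\widetilde P_0$ consists exactly of $(\widetilde P_0,\widetilde Q_1),\dots,(\widetilde P_0,\widetilde Q_g)$, with $x$-coordinates the computed series $x_j=x(\widetilde Q_j)$. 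Applying Proposition~\ref{prop:pullpush} to $\beta$ and its graph $Y$, the tangent matrix $M'=(m'_{i,j})_{i,j}$ of $\beta$ satisfies $\sum_{j=1}^g f_i(x_j)\,\d x_j=\sum_{j=1}^g m'_{i,j}f_j(x)\,\d x$ for all $i$. On the other hand, the lifting procedure of the previous section produced the very same series $x_j$ subject to $\sum_{j=1}^g f_i(x_j)\,\d x_j=\sum_{j=1}^g m_{i,j}f_j(x)\,\d x$; since $f_1,\dots,f_g$ are $K$-linearly independent, comparing coefficients gives $M'=M$. Thus $M$ is the tangent representation of the genuine endomorphism $\beta\in\End(J_K)$, as required.

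I expect the main obstacle to be the assertion in the second paragraph that $Y$ is exactly the closure of the $g$ computed branches, with $\pi_1|_Y$ carrying no extra sheet. This rests on two ingredients working in tandem: hypothesis (ii), through the chain ``the fibre of $\pi_1|_Y$ over $P_0$ has length $g$, hence $\deg\pi_1|_Y=g$, hence the generic fibre of $\pi_1|_Y$ is nothing but the branches $\widetilde Q_j$'', which excludes spurious $\pi_1$-dominant components of $E$ that would inflate the degree; and the choice of the precision margin $m$ large enough that vanishing on a branch to order $N+m$ is equivalent to identical vanishing, so that $E$ actually contains these branch closures. Everything else --- the dimension count for $E$, the finite flatness of $\pi_1|_Y$, the correspondence-to-endomorphism construction, and the appeal to Proposition~\ref{prop:pullpush} --- is routine once this matching is secured.
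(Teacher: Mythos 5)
You take essentially the same route as the paper's proof: show $\dim E=1$, use the fibre condition at $P_0$ to exclude vertical components through $(P_0,P_0)$, pass to the horizontal part $Y$, note that such a correspondence induces an endomorphism of $J_K$, and identify its tangent matrix with $M$ by matching the branches of $Y$ over $\widetilde{P}_0$ against the computed Puiseux data via \eqref{eq:lift}. Constructing the endomorphism by $(p_2)_*p_1^*$ together with the universal property of the Jacobian, instead of citing Smith, and invoking Proposition \ref{prop:pullpush} explicitly, are presentational differences only.

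The one place where you go beyond what the construction supplies is the justification of the key containment: you posit that the margin can be increased until no nonzero function of bidegree at most $(n_1,n_2)$ vanishes to order $N+m$ on a branch without vanishing on it identically. The construction only fixes a small global margin $m\ge 2$, and the threshold you would need is a Bezout-type bound in terms of the bidegree of the branch closure---precisely the degree estimates alluded to in Remark \ref{rmk:daggerpi2}, which the paper deliberately does not develop---and it cannot be certified from the data at hand (a priori the branch need not even be algebraic, which is part of what is being proved). The paper is terse at exactly the same spot: it takes the containment of the points $(\widetilde{P}_0,\widetilde{Q}_j)$ in $Y$ as built into the construction and uses only that the branches of $Y$ over $\widetilde{P}_0$ reproduce the computed series to within the margin; together with hypothesis (ii), which bounds the branches over the formal disc by $g$, this already pins down the tangent matrix, since $M$ is determined by finitely many leading coefficients of the $x_j$ (cf.\ the remark following the proposition). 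So your argument is on par with the paper's, but the specific ``increase $m$ until order-$(N+m)$ vanishing forces identical vanishing'' mechanism should not be presented as available. A minor further slip: with the reduced structure on $Y$, hypothesis (ii) only yields $\deg(\pi_1|_Y)\le g$ rather than equality, since the fibre of $E$ over $P_0$ may carry extra length from embedded or non-reduced structure; fortunately the inequality is all your argument actually uses.
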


\begin{proof}
    We have ensured that a nonzero function on $U \times U$ vanishes at $E$, so
    $E$ cannot be all of $X \times X$. Yet the subscheme $E$ cannot be of
    (Krull) dimension $0$ either because $E$ surjects to $X$. Therefore $E$ is
    of dimension $1$.

    Let $Y \subset E$ be the union of the irreducible components of dimension
    $1$ of $E$ that contain the points $(\widetilde{P}_0, \widetilde{Q}_j)$.
    Because the degree of the projections to the second factor do not depend on
    the chosen base point, our hypothesis on the intersection of $E$ with
    $\left\{ P_0 \right\} \times X$ ensures that $E \smallsetminus Y$ consists
    of a union of points and vertical divisors: these define the trivial
    endomorphism.

    The subscheme $Y \subset X \times X$ defines a (Weil or Cartier) divisor
    whose projection to the second component is of degree $g$, and such a
    divisor defines an endomorphism \cite[\S 3.5]{smith-thesis}. The fact that
    $Y$ contains the points $(\widetilde{P}_0, \widetilde{Q}_j)$, which we
    chose to satisfy \eqref{eq:lift} over $K$ with a suitable nontrivial
    margin, then ensures without any further verification the endomorphism
    enduced by $Y$ has tangent representation $M$.
\end{proof}

The hypotheses of Proposition \ref{prop:verify} can be verified
algorithmically, for example by using Gr\"obner bases. Indeed, the property
that $\pi_2$ maps $E$ surjectively to $X$ can be verified by calculating a
suitable elimination ideal, and the degree of the intersection with $\left\{
P_0 \right\} \times X$ is the dimension over $K$ of the space of global
sections of a zero-dimensional scheme. If desired, the construction of the
divisor $Y$ from $E$ is also effectively computable, calculating irreducible
components via primary decomposition.

In a day-and-night algorithm, we would alternate the step of seeking to fit a
divisor (running through an enumeration of the possible values $(n_1, n_2)$
above) with refining the numerical endomorphism ring by computing with
increased precision of the period matrix.  If $M$ does not correspond to an
endomorphism, then we will discover this in the numerical computation (provably
so, if one works with interval arithmetic to keep track of errors in the
numerical integration). On the other hand, if $M$ does correspond to an
endomorphism, then eventually a divisor will be found, since increasing $n_1$
and $n_2$ eventually yields generators of the defining ideal of the divisor in
$U \times U$ defined by $M$, which we can prove to be correct by using
Proposition \ref{prop:verify}. Therefore we have a deterministic algorithm that
takes a putative endomorphism represented by a matrix $M \in \M_g(F^{\alg})$
and returns \textsf{true} or \textsf{false} according to whether or not $M$
represents an endomorphism of the Jacobian.

\begin{remark}
    More sophisticated versions of the approach above are possible, for example
    by using products of Riemann--Roch spaces instead of using the square of
    the given ambient space. Additionally, the algorithm can be significantly
    sped up by determining the divisor $Y$ modulo many small primes and
    applying a version of the Chinese remainder theorem with denominators
    (involving LLL) to recover the defining ideal of $Y$ from its reductions.
\end{remark}

\begin{remark}
    Conversely, if we have a divisor $Y \subset X \times X$ (not necessarily
    obtained from the Taylor or Puiseux method), we can compute the tangent
    representation of the corresponding endomorphism as follows. Choose a point
    $P_0$ on $X$ such that the intersection of $\{P_0\} \times X$ with $Y$ is
    proper, with
    \begin{equation}
        Y \cap (\{P_0\} \times X) =\left\{ Q_1, \dots , Q_e \right\}
    \end{equation}
    the points $Q_e$ taken with multiplicity.  Then we can again develop the
    points $Q_j$ infinitesimally, and as long as \eqref{eq:lift} is verified
    for the initial terms, the divisor $Y$ induces an endomorphism with $M$ as
    tangent representation.
\end{remark}

\begin{remark} \label{rmk:daggerpi2}
    While the above method will terminate as long as $M$ corresponds to an
    actual endomorphism, Khuri-Makdisi has indicated an upper bound $D$ of the
    degree of $\pi_2$ to us, namely $(g - 1)! \tr (\alpha \alpha^{\dagger})$,
    where $\dagger$ denotes the Rosati involution. Such an upper bound would
    allow us to rule out a putative tangent matrix $M$ as one \emph{not}
    corresponding to an endomorphism without resort to a numerical computation.

    Indeed, having calculated the upper bound $D$, we can take $(n_1, n_2) =
    (D, g)$ above, and a suitably large $N$ can be determined by applying a
    version of the Riemann-Roch theorem for surfaces. After determining the
    resulting equations, Proposition \ref{prop:verify} can be used to tell us
    conclusively whether we actually obtain a suitable divisor or not. However,
    since our day-and-night algorithm is provably correct and functions very
    well in practice, we have not elaborated these details or implemented this
    approach.
\end{remark}

\begin{example} \label{exm:certrunexample}
    We revisit our running example one last time. Recall that
    \begin{equation}
        X \colon y^2 = x^{5} - x^{4} + 4 x^{3} - 8 x^{2} + 5 x - 1
    \end{equation}
    and
    \begin{equation}
        M = \left(\begin{array}{rr}
                0 & \sqrt{2} \\
                \sqrt{2} & 0
        \end{array}\right) .
    \end{equation}
    While $X$ may not have an obvious Weierstrass point, we can apply a trick
    that is useful for general hyperelliptic curves. Instead of $X$, we
    consider the quadratic twist of $X$ by $-1$, namely
    \begin{equation}
        X' \colon y^2 = -(x^{5} - x^{4} + 4 x^{3} - 8 x^{2} + 5 x - 1) ,
    \end{equation}
    which has the rational non-Weierstrass point $P_0 = (0, 1)$. While the
    curves $X$ and $X'$ are not isomorphic, their endomorphism rings are,
    because the isomorphism $(x, y) \to (x, \sqrt{-1} y)$ induces a scalar
    multiplication on global differentials, which disappears when changing
    basis by it.

    We find a divisor with $d = 4$ with respect to $\pi_2$ (matching
    Khuri-Makdisi's estimate $(g - 1)! \tr (\alpha \alpha^{\dagger}) = 4$ from
    Remark \ref{rmk:daggerpi2}).  Using a margin $m = 16$, the number of terms
    needed in the Puiseux expansion to find enough equations of $Y$ equals
    $48$. On a standard desktop machine, this calculation took less than $3$
    CPU seconds.

    The equations defining the divisor $Y$ representing $M$ are quite long and
    unpleasant, so that we cannot reproduce them here. As mentioned in the
    introduction, they are available in the repository that contains our
    implementation. However, we can indicate the induced divisor mapped to
    $\P^1 \times \P^1$ under the hyperelliptic involution: it is given by
    \begin{small}
        \begin{equation}
            \begin{split}
                & 4 x_2^4 x_1^8 + 4 x_2^4 x_1^7 + (-96 \sqrt{2} + 29) x_2^4 x_1^6
                + 2 (48 \sqrt{2} - 9) x_2^4 x_1^5 + (-312 \sqrt{2} + 1193) x_2^4 x_1^4 \\ &
                + 4 (216 \sqrt{2} - 891) x_2^4 x_1^3 + 4 (-210 \sqrt{2} + 959) x_2^4 x_1^2
                + 4 (84 \sqrt{2} - 440) x_2^4 x_1 + 4 (-12 \sqrt{2} + 73) x_2^4 \\ &
                + 4 (2 \sqrt{2} + 2) x_2^3 x_1^8 + 4 (-39 \sqrt{2} - 65) x_2^3 x_1^7
                + 4 (107 \sqrt{2} + 597) x_2^3 x_1^6 + 4 (-120 \sqrt{2} - 1864) x_2^3 x_1^5 \\ &
                + 4 (152 \sqrt{2} + 2649) x_2^3 x_1^4 + 4 (-243 \sqrt{2} - 1945) x_2^3 x_1^3
                + 4 (223 \sqrt{2} + 776) x_2^3 x_1^2 + 4 (-84 \sqrt{2} - 166) x_2^3 x_1 \\ &
                + 4 (10 \sqrt{2} + 24) x_2^3 + 4 (-2 \sqrt{2} + 2) x_2^2 x_1^8
                + 2 (164 \sqrt{2} + 51) x_2^2 x_1^7 + 2 (-664 \sqrt{2} - 1543) x_2^2 x_1^6 \\  &
                + 4 (340 \sqrt{2} + 3770) x_2^2 x_1^5 + 2 (-348 \sqrt{2} - 13363) x_2^2 x_1^4
                + 2 (484 \sqrt{2} + 10499) x_2^2 x_1^3 \\ &
                + 4 (-196 \sqrt{2} - 1841) x_2^2 x_1^2 + 4 (20 \sqrt{2} + 301) x_2^2 x_1
                + 4 (12 \sqrt{2} - 46) x_2^2 + 4 (-5 \sqrt{2} - 9) x_2 x_1^8 \\ &
                + 4 (-24 \sqrt{2} + 12) x_2 x_1^7 + 4 (358 \sqrt{2} + 226) x_2 x_1^6
                + 4 (-303 \sqrt{2} - 2210) x_2 x_1^5 + 4 (63 \sqrt{2} + 4242) x_2 x_1^4 \\ &
                + 4 (-508 \sqrt{2} - 2960) x_2 x_1^3 + 4 (538 \sqrt{2} + 623) x_2 x_1^2
                + 4 (-139 \sqrt{2} + 40) x_2 x_1 + (8 \sqrt{2} + 33) x_1^8 \\ &
                + 4 (-2 \sqrt{2} + 4) x_1^7 + 4 (-106 \sqrt{2} + 19) x_1^6
                + 2 (164 \sqrt{2} + 807) x_1^5 - 3348 x_1^4 + 4 (166 \sqrt{2} + 515) x_1^3 \\ &
                + (-720 \sqrt{2} - 223) x_1^2 + 4 (46 \sqrt{2} - 24) x_1 = 0 .
            \end{split}
        \end{equation}
    \end{small}
\end{example}

\subsection{Cantor representation}

In certain situations it might be more convenient directly to compute the
rational map
\begin{equation}\label{eq:maptosym}
    \alpha_X \colon X \dashrightarrow \Sym^g (X).
\end{equation}
This can be done as follows. Choose an affine model of $f (x, y) = 0$ for $X$.
Then a generic divisor of degree $g$ on $X$ can be described by equations of
the form
\begin{equation}\label{eq:cantorrep}
    \begin{split}
        x^g + a_1 x^{g - 1} + \dots + a_{g - 1} x + a_g = 0 , \\
        y = b_1 x^{g - 1} + \dots + b_{g - 1} x + b_g ,
    \end{split}
\end{equation}
which we call a \defi{Cantor representation}. Using $f$ one can determine $g$
equations in the $a_i$ and $b_i$ that conversely determine when a generic point
of the form \eqref{eq:cantorrep} defines a divisor of degree $g$ on $X$.

After fixing our origin in some point $P_0$ as before, \eqref{eq:cantorrep}
also gives a description of generic divisors of degree $0$ on $X$. By taking a
sufficiently precise development $(\widetilde{P}_0, \widetilde{Q}_j)$, we can
obtain $a_i$ and $b_i$ as functions in $K (X)$, increasing this precision as we
try functions of larger degree. In the end, we can verify these rational
functions by checking that the equations \eqref{eq:cantorrep} are satisfied and
additionally checking that the corresponding tangent representation is correct.
As above, we see that for this final step it suffices to check that the initial
terms of the Puiseux approximation cancel \eqref{eq:cantorrep}.

\subsection{Splitting the Jacobian} \label{sec:splitjac}

The algorithms above can be generalized to the verification of the existence of
homomorphisms $\Jac (X) \to \Jac (Y)$, which can be represented by either a
rational map $X \dashrightarrow \Sym^{g_Y} (Y)$ or a divisor on $X \times Y$.
In particular, this allows us to verify factors of the Jacobian variety that
correspond to curves, as explained by Lombardo \cite[\S 6.2]{lombardo-endos} in
genus $2$. For curves of genus $3$, we can similarly identify curves of genus
$2$ that arise in their Jacobian, by reconstructing these genus $2$ curves from
their period matrices after choosing a suitable polarization.

\subsection{Saturation} \label{sec:saturation}

The methods above allow us to certify that the tangent representation $M \in
\M_g(K)$ of a putative endomorphism is correct. If we are also given that the
period matrix $\Pi$ is correct up to some (typically small) precision---for
hyperelliptic curves, one may use Molin's double exponentiation algorithm
\cite[Th\'{e}or\`{e}me 4.3]{molin}---we may also deduce that the geometric
representation $R \in \M_{2g}(\Z)$ in \eqref{equation:analytic_geometric} is
also correct. Assuming that we have verified the geometric representation of
all the generators of the endomorphism algebra, we can then also recover the
endomorphism ring by considering possible superorders and ruling them out.

\begin{example}
    For example, take $X\colon y^2 = -3 x^6+8 x^5-30 x^4+50 x^3-71 x^2+50 x-27$
    to be a simplified Weierstrass model for the genus 2 curve with LMFDB label
    \href{http://www.lmfdb.org/Genus2Curve/Q/961/a/961/2}{\texttt{961.a.961.2}}.
    We can then verify that the endomorphism algebra is $\Q(\sqrt{5})$, and
    $\sqrt{5}$ is represented by
    \begin{equation}
        M =
        \begin{pmatrix}
            -1 & 2 \\
            2 & 1
        \end{pmatrix}
        \quad \text{and} \quad
        R =
        \begin{pmatrix}
            -1 & 0 & 0 & -1
            \\
            1 & 1 & 1 & 0
            \\
            0 & 4 & -1 & 1
            \\
            -4 & 0 & 0 & 1
        \end{pmatrix}.
    \end{equation}
    From the above computation, we also deduce that the endomorphism ring is
    $\Z[\sqrt{5}]$ and not the superorder $\Z[(1 + \sqrt{5})/2]$, as $1 + R
    \notin 2M_{4}(\Z)$.
\end{example}

\section{Upper bounds}
\label{sec:upperbounds}

In this section, we show how determining Frobenius action on $X$ for a large
set of primes often quickly leads to sharp upper bounds on the dimension of the
endomorphism algebra of the Jacobian $J$ of $X$.

\subsection[Genus 2 upper bounds]{Upper bounds in genus 2 via N\'eron--Severi rank} \label{sec:71g2}

We begin with upper bounds for curves of genus 2.  Lombardo \cite[\S
6]{lombardo-endos} has already given a practical method for these curves; we
consider a slightly different approach.

Suppose $X$ has genus $2$. Then its Jacobian $J$ is naturally a principally
polarized abelian \emph{surface}; let $\dagger$ denote its Rosati involution.
In this case, we can take advantage of the relation between the N\'eron--Severi
group $\NS(J)$ and $\End(J)_\Q$: by Mumford \cite[Section 21]{mumford}, we have
an isomorphism of $\Q$-vector spaces
\begin{equation} \label{eqn:rosati}
    \NS ( J )_\Q \simeq \{ \phi \in \End( J )_\Q : \phi^{\dagger} = \phi \} .
\end{equation}

Let $\rho( J ) \colonequals \rk \NS( J )$.  By Albert's classification of
endomorphism algebras,
\begin{equation} \label{eqn:rhotoEnd}
    \rho(J^{\alg}) =
    \begin{cases}
        4, & \text{if } \End(J^{\alg})_\R \simeq \M_2(\C); \\
        3, & \text{if } \End(J^{\alg})_\R \simeq \M_2(\R); \\
        2, & \text{if } \End(J^{\alg})_\R \simeq \R\times\R, \C\times\C \text{ or } \C \times \R; \\
        1, & \text{if } \End(J^{\alg})_\R \simeq \R.
    \end{cases}
\end{equation}
So if we had a way to compute $\rho(J^{\alg})$, we could limit the number of
possibilities for $\End(J^{\alg})_\R$, and hit it exactly in many cases
including the typical case when $\End(J^{\alg})=\Z$.  To compute
$\rho(J^{\alg})$, we look modulo primes.

Let $\frakp$ be a nonzero prime of (the ring of integers of) $F$ with residue
field $\F_\frakp$.  Let $\F_\frakp^{\alg}$ be an algebraic closure of $\Fp$.
Suppose that $X$ has good reduction $X_{\Fp}$ at $\frakp$. We write
$J_\frakp=J_{\Fp}$ for the reduction of $J$ modulo $\frakp$ and
$J_\frakp^{\alg}=J_{\F_\frakp^{\alg}}$ its base change to $\F_\frakp^{\alg}$.
(There is no ambiguity in this notation, as $(J^{\alg})_\frakp$ does not make
sense.) Then there is a natural injective specialization homomorphism of
$\Z$-lattices
\begin{equation} \label{eqn:NsJfalg}
    s_p\colon \NS (J^{\alg}) \hookrightarrow \NS ( J_{\frakp}^{\alg} ),
\end{equation}
so $\rho(J^{\alg}) \leq \rho(J_{\frakp}^{\alg})$.

Let $q=\#\Fp$, let $\Frob_\frakp$ be the $q$-power Frobenius automorphism, and
let $\ell \nmid q$ be prime.  Let
\begin{equation}
    \begin{aligned}
        c_\frakp(T) \colonequals &
        \det\left( 1 - \Frob_{\frakp} T \,|\, H_{\et}^1( J^{\alg}, \Q_{\ell}) \right)\\
        = & \det\left( 1 - \Frob_{\frakp} T \,|\, H_{\et}^1( X^{\alg}, \Q_{\ell}) \right) \\
        = & 1 + a_1 T + a_2 T^2 + a_1 q T^3 + q^2 T^4 \in 1+T\Z[T].
    \end{aligned}
\end{equation}
Then
\begin{equation}
    \begin{aligned}
        c_\frakp^{\wedge 2}(T) \colonequals
        & \det\left( 1 - \Frob_{\frakp} T \,|\, H_{\et}^2( J^{\alg}, \Q_{\ell}) \right) \\
        =& \det\left( 1 - \Frob_{\frakp} T \,|\, \textstyle{\bigwedge}^2 H_{\et}^1( J^{\alg}, \Q_{\ell}) \right)\\
        = & (1 - q T)^2 (1 + (2 q - a_2)T + (2 q + a_1 ^2 - 2 a_2) q T^2
        + (2 q - a_2) q^2 T^3 + q^4 T^4).
    \end{aligned}
\end{equation}

The Tate conjecture holds for abelian varieties over finite fields \cite{tate},
and it relates $\NS(J_\Fp)$, as a lattice with its intersection form, with
$c_\frakp^{\wedge 2}(T)$ in the following way.

\begin{proposition} \label{prop:tate}
    The following statements hold.
    \begin{enumalph}
    \item $\rho(J_{\frakp}^{\alg})$ is equal to the number of reciprocal roots
        of $c_\frakp^{\wedge 2}(T)$ of the form $q$ times a root of unity.
    \item We have
        \begin{equation} \label{disc}
            \disc( \NS(J_{\frakp}) )
            = \lim_{s \rightarrow 1} \frac{  (-1)^{\rho(J_{\frakp}) -1}
            c_\frakp^{\wedge 2}(q^{-s})}{ q (1-q^{1-s})^{\rho(J_{\frakp})} }
            \bmod{\Q^{\times 2}}.
        \end{equation}
    \end{enumalph}
\end{proposition}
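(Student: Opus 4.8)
The plan is to obtain part (a) directly from Tate's theorem \cite{tate}, applied over all finite extensions of $\Fp$, and part (b) from its quantitative refinement (the Artin--Tate formula), together with a computation of the numerical invariants of the abelian surface $J_{\frakp}$. Throughout, part (a) concerns the \emph{geometric} Picard number $\rho(J_{\frakp}^{\alg})$, while part (b) concerns $\rho(J_{\frakp})$ over $\Fp$, exactly as in the statement.

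For part (a), I would first record that, since $J_{\frakp}$ is an abelian surface, $\Het^2(J_{\frakp}^{\alg},\Q_\ell) \cong \bigwedge^2 \Het^1(J_{\frakp}^{\alg},\Q_\ell)$, so $c_{\frakp}^{\wedge 2}(T)$ is exactly the characteristic polynomial of $\Frob_{\frakp}$ acting on $\Het^2(J_{\frakp}^{\alg},\Q_\ell)$; write its reciprocal roots as $\beta_i$, all of absolute value $q$ by the Weil bounds. Over the degree-$n$ extension $\F_{q^n}$ of $\Fp$, Frobenius acts as $\Frob_{\frakp}^n$ with eigenvalues $\beta_i^n$, so the Tate conjecture for divisors on abelian varieties over finite fields \cite{tate} gives that the Picard number of $J_{\frakp}$ over $\F_{q^n}$ equals $\#\{i : \beta_i^n = q^n\} = \#\{i : (\beta_i/q)^n = 1\}$. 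Since $\NS(J_{\frakp}^{\alg})$ is finitely generated with continuous Galois action, $\rho(J_{\frakp}^{\alg})$ agrees with this Picard number for all sufficiently divisible $n$, and for such $n$ the condition $(\beta_i/q)^n = 1$ selects precisely those $\beta_i$ with $\beta_i/q$ a root of unity. This proves (a).

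For part (b), I would apply the Artin--Tate formula to the smooth projective geometrically integral surface $J_{\frakp}/\Fp$. Since the Tate conjecture for divisors holds for $J_{\frakp}$, the Brauer group $\Br(J_{\frakp})$ is finite and
\begin{equation*}
    \lim_{s\to 1}\frac{c_{\frakp}^{\wedge 2}(q^{-s})}{(1 - q^{1-s})^{\rho(J_{\frakp})}}
    = \frac{\#\Br(J_{\frakp}) \cdot |\disc(\NS(J_{\frakp}))|}{q^{\alpha}\,(\#\NS(J_{\frakp})_{\mathrm{tors}})^2},
\end{equation*}
where $\alpha = \chi(\mathcal{O}_{J_{\frakp}}) - 1 + \dim\Pic^0(J_{\frakp})$; the left-hand limit is finite and nonzero since $(1-qT)$ divides $c_{\frakp}^{\wedge 2}(T)$ with multiplicity exactly $\rho(J_{\frakp})$, by Tate over $\Fp$ as in (a). For the abelian surface $J_{\frakp}$ one computes $\chi(\mathcal{O}_{J_{\frakp}}) = 1 - 2 + 1 = 0$ and $\dim\Pic^0(J_{\frakp}) = 2$, so $\alpha = 1$. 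Reducing modulo $\Q^{\times 2}$, the factor $(\#\NS(J_{\frakp})_{\mathrm{tors}})^2$ is a square, and $\#\Br(J_{\frakp})$ is a perfect square (Tate's skew-symmetric pairing handles the prime-to-$p$ part, and the $p$-primary part is known to have square order as well), so that
\begin{equation*}
    |\disc(\NS(J_{\frakp}))| \equiv q \cdot \lim_{s\to 1}\frac{c_{\frakp}^{\wedge 2}(q^{-s})}{(1 - q^{1-s})^{\rho(J_{\frakp})}} \pmod{\Q^{\times 2}}.
\end{equation*}
Finally, by the Hodge index theorem the intersection form on $\NS(J_{\frakp})\otimes\R$ has signature $(1, \rho(J_{\frakp}) - 1)$, so $\disc(\NS(J_{\frakp}))$ has sign $(-1)^{\rho(J_{\frakp})-1}$; substituting $|\disc(\NS(J_{\frakp}))| = (-1)^{\rho(J_{\frakp})-1}\disc(\NS(J_{\frakp}))$ and using $q \equiv q^{-1} \pmod{\Q^{\times 2}}$ then yields the formula \eqref{disc}.

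The step I expect to be the main obstacle is assembling the Artin--Tate formula with the correct constants: pinning down the exponent $\alpha = 1$ for the abelian surface, and especially invoking the fact that $\#\Br(J_{\frakp})$ is a perfect square, which is delicate at the prime $p$. By contrast, the remaining ingredients---the identification $\Het^2 = \bigwedge^2\Het^1$, the Weil bounds, Tate's theorem, and the Hodge index theorem---are standard.
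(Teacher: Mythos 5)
Your proposal is correct and follows essentially the same route as the paper: part (a) is Tate's theorem applied after passing to a sufficiently large finite extension (the paper's ``taking a power of the Frobenius''), and part (b) is the Artin--Tate formula (Tate plus Milne's theorem) combined with the squareness of $\#\Br$, which is exactly the paper's ``after simplification.'' Your added details---the computation $\alpha = \chi(\mathcal{O}_{J_\frakp}) - 1 + \dim\Pic^0(J_\frakp) = 1$, the sign $(-1)^{\rho(J_\frakp)-1}$ from the Hodge index theorem, and $q \equiv q^{-1} \bmod \Q^{\times 2}$---are precisely the simplifications the paper leaves implicit.
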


\begin{proof}
    For part (a), we know that $\rho(X_{\frakp})$ is equal to the multiplicity
    of $q$ as a reciprocal root of $c_\frakp^{\wedge 2}(T)$ by the Tate
    conjecture, and (a) follows by taking a power of the Frobenius.  For part
    (b), the Tate conjecture implies the Artin--Tate conjecture by work of
    Milne \cite[Theorem 6.1,][]{milne-tate,milne-tate-a}, which implies (b)
    after simplification using that $\# \Br(X)$ is a perfect square
    \cite{liu-lorenzini-raynaud}.
\end{proof}

We will use one other ingredient: we can rule out the possibility that
$J^{\alg}$ has CM by looking at $c_\frakp(T)$ as follows.

\begin{lemma} \label{lem:endJFalg}
    Suppose that $\End(J^{\alg})_\Q=L$ is a quartic CM field.  Let $\frakp$ be
    a prime of $F$ of good reduction for $X$, let $p$ be the prime of $\Q$
    below $\frakp$, and suppose that $p$ splits completely in $L$.  Then
    $c_\frakp(T)$ is irreducible and
    \begin{equation}  \label{eqn:QTCM}
        L \simeq \Q[T]/(c_\frakp(T)).
    \end{equation}
\end{lemma}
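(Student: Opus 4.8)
The plan is to forget the number field almost entirely and work with $J_\frakp$, combining the classification of endomorphism algebras of abelian surfaces over finite fields with the classical theory of complex multiplication. First I would extract structure from the hypothesis: since $\End(J^{\alg})_\Q = L$ is a commutative field, $J^{\alg}$ is a simple abelian surface of CM type, and its CM type $(L,\Phi)$ is necessarily \emph{primitive} --- otherwise $J^{\alg}$ would be isogenous to the square of an elliptic curve and $\End(J^{\alg})_\Q$ would strictly contain $L$. The only consequence I will use is that $L$ then has no imaginary quadratic subfield (the Galois closure of $L$ has group $\Z/4$ or $D_4$, never $(\Z/2)^2$; in the excluded case every CM type of $L$ is induced from one of its two imaginary quadratic subfields).

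Next I would pass to the reduction. Choose a finite extension $F'/F$ over which all endomorphisms of $J^{\alg}$ are defined and a prime $\frakp'$ of $F'$ above $\frakp$. Good reduction gives an injective specialization map $\End(J_{F'}) \hookrightarrow \End(J_{\frakp'})$, compatible with the identification of $V_\ell(J^{\alg})$ with $V_\ell(J_{\frakp'})$ carrying the Frobenius action; hence $J_\frakp^{\alg}$ also carries an action of $L$, and since $[L:\Q] = 4 = 2\dim J$ the Tate module is free of rank one over $L \otimes_\Q \Q_\ell$, so that $L$ is its own centralizer in $\End(J_{\frakp'})_\Q$. Because $L$ is a quartic CM field with no imaginary quadratic subfield, the classification of endomorphism algebras of abelian surfaces then forces $\End(J_\frakp^{\alg})_\Q = L$ and $J_\frakp^{\alg}$ simple: the only competitor, $\M_2$ of an imaginary quadratic field $K_0$, would require $K_0 \subseteq L$. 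Separately, since $p$ splits completely in $L$, the $p$-divisible group of $J_\frakp$ decomposes according to the four primes of $L$ above $p$ into pieces of height one, each of which is étale or multiplicative, so $J_\frakp$ has \emph{ordinary} good reduction.

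It then remains to identify $c_\frakp(T)$. The Frobenius endomorphism $\Frob_\frakp$ lies in $\End(J_\frakp^{\alg})_\Q = L$, so $\Q(\Frob_\frakp) \subseteq L$, and I would rule out proper containment: the proper subfields of $L$ are $\Q$ and its maximal totally real subfield, and a Weil $q$-number generating either of these squares to $q$, contradicting ordinarity. Hence $\Q(\Frob_\frakp) = L$, so $J_\frakp$ is simple over $\Fp$ with $\End(J_\frakp)_\Q = L$ commutative, and therefore the characteristic polynomial of $\Frob_\frakp$ on $H_{\et}^1$ equals its minimal polynomial, irreducible of degree $4$ with residue algebra $\Q(\Frob_\frakp) = L$. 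Finally $c_\frakp(T)$ is the reciprocal normalization of this characteristic polynomial (and $\Q(\Frob_\frakp^{-1}) = \Q(\Frob_\frakp)$ since complex conjugation preserves $L$), so $c_\frakp(T)$ is irreducible and $\Q[T]/(c_\frakp(T)) \cong L$, which is the claim.

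The main obstacle is the CM bookkeeping in the middle step: deducing from the single equality $\End(J^{\alg})_\Q = L$ that the \emph{geometric} reduction remains simple with endomorphism algebra exactly $L$ and with Frobenius generating $L$. This is the point where one must simultaneously use primitivity of the CM type (to exclude imaginary quadratic subfields of $L$), the classification of endomorphism algebras of abelian surfaces over finite fields, and the ordinarity forced by the splitting hypothesis on $p$. Once these are secured, the identification of $c_\frakp(T)$ is a formality; one could alternatively replace the ordinarity argument for $\Q(\Frob_\frakp)=L$ by a direct appeal to the Shimura--Taniyama formula, which when $p$ splits completely gives valuations $0$ or $v(q)$ at the primes of $L$ above $p$, distributed $g$--$g$, again forcing the Weil number to generate $L$.
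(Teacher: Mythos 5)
Your proof is correct, but it takes a more self-contained route than the paper's. The paper's proof is essentially a citation chain: it invokes Oort's result that $p$ split in $L$ forces ordinary reduction, concludes from ordinarity that $\End(J_{\F_{\frakp'}})_\Q = L$ for a prime $\frakp'$ of the field of definition $F'$ of the CM, applies Tate's theorem that the center of the endomorphism algebra over a finite field is generated by Frobenius to get $\Q[\pi'] = L$, and then descends to $\F_\frakp$ via the sandwich $L \supseteq \Q[\pi] \supseteq \Q[\pi'] = L$, using that $\pi'$ is a power of $\pi$. You instead prove ordinarity directly from the slope decomposition of the $p$-divisible group under the split $L$-action, make explicit the step the paper leaves implicit---that primitivity of the CM type rules out an imaginary quadratic subfield of $L$, so the geometric endomorphism algebra of the reduction cannot jump---and replace the appeal to Tate's center theorem plus the power trick by the observation that the proper subfields of $L$ are $\Q$ and the real quadratic subfield, either of which would force $\pi^2 = q$ and hence supersingularity. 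What the paper's route buys is brevity; what yours buys is an explicit justification of the assertion ``ordinary, so $\End = L$'', which is precisely where the imaginary-quadratic-subfield point has to enter.

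One small patch is needed in your classification step: you name $\M_2(K_0)$ as the only competitor, but over $\F_\frakp^{\alg}$ the algebra $\M_2(B)$, with $B$ the quaternion algebra over $\Q$ ramified at $p$ and $\infty$, is also a possible geometric endomorphism algebra of an abelian surface (the supersingular case), and the center argument does not exclude it since its center $\Q$ lies in $L$. This is harmless for you: the ordinarity you establish in the same paragraph rules it out, or alternatively $p$ split completely in $L$ gives $L$ local degree $1$ at the primes above $p$, so $L$ does not split $B$ and hence cannot embed in $\M_2(B)$. The remaining a priori possibilities (product algebras, or a division quaternion algebra with imaginary quadratic center) are disposed of by the same self-centralizing and center considerations you already use, so no further idea is missing.
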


\begin{proof}
    Suppose that the CM for $J$ is defined over $F' \supseteq F$, so
    $\End(J_{F'})_\Q=L$.  Let $\frakp'$ be a prime above $\frakp$ in $F'$.
    Then by Oort \cite[(6.5.e)]{oort}, if $p$ splits in $L$ then $J_{\Fp}$ is
    ordinary, so $\End(J_{\F_{\frakp'}})_\Q=L$. Let $\pi \in \End(J)$ be the
    geometric Frobenius for $\frakp$ and similarly $\pi' \in \End(J_{F'})$ for
    $\frakp'$.  Then by Tate \cite[Theorem 2]{tate}, $\Q[\pi']=L$ and in
    particular the characteristic polynomial of $\pi'$ is irreducible.  But
    $\pi'$ is a power of $\pi$, so we have the inclusions $L \supseteq \Q[\pi]
    \supseteq \Q[\pi']=L$, and the lemma follows.
\end{proof}

We compute upper bounds on $\rho(J^{\alg})$ in the following way.  By
Proposition~\ref{prop:tate}(a), we can compute $\rho(J_{\frakp}^{\alg})$ for
many good primes $\frakp$ by counting points on $X_\frakp$.  We have two cases:
\begin{itemize}
    \item If $\rho(J^{\alg})$ is even, then by Charles \cite[Theorem
      1]{charles-picardk3} (part (2) cannot occur) there are infinitely many
      primes such that  $\rho(J^{\alg}) = \rho(J_{\frakp}^{\alg})$.
    \item If $\rho(J^{\alg})$ is odd, then also by Charles
      \cite[Proposition 18]{charles-picardk3} (in our setting we must have $E =
      \Q$), there are infinitely many pairs of primes $(\frakp_1, \frakp_2)$
      such that
      \begin{gather}
          \rho(J^{\alg}) + 1 = \rho(J_ {\F^{\alg} _{\frakp_1}})
          = \rho(J_ {\F^{\alg} _{\frakp_2}})\\
          \disc( \NS(J_ {\F^{\alg}_{\frakp_1}})) \not \equiv
          \disc( \NS(J_ {\F^{\alg}_{\frakp_2}})) \bmod{\Q^{\times 2}}.
          \label{eqn:dicsnscomp}
      \end{gather}
\end{itemize}
By \eqref{eqn:NsJfalg}, we then seek out the minimum values of
$\rho(J_{\F^{\alg}_{\frakp}})$ over the first few primes $\frakp$ of good
reduction; and for those where equality holds, we check \eqref{eqn:dicsnscomp}
using \eqref{disc}, improving our upper bound by $1$ when the congruence fails.
This upper bound for $\rho(J^{\alg})$ gives an upper bound for
$\End(J^{\alg})_\Q$ by \eqref{eqn:rhotoEnd}, and a guess for
$\End(J^{\alg})_\R$ except when $\rho(J^{\alg})=2$. For example, this approach
allows us to quickly rule out the possibility that $J^{\alg}$ has quaternionic
multiplication (QM) by showing that $\rho(J^{\alg}) \leq 2$.

To conclude, suppose that we are in the remaining case where, after many primes
$\frakp$, we compute $\rho(J^{\alg}) \leq 2$ and we believe that equality
holds.  Then the subalgebra $L_0 \subseteq \End(J^{\alg})_\Q$ fixed under the
Rosati involution has dimension $\leq 2$ over $\R$.  We proceed as follows.

\begin{enumerate}
    \item By the algorithms in the previous section, we can find and certify a
      nontrivial endomorphism.  So with a day-and-night algorithm, eventually
      either we will find $\rho(J^{\alg})=1$ or we will have certified that the
      Rosati-fixed endomorphism algebra $L_0$ is of dimension $2$.
    \item Next, we check if $L_0$ is a field by factoring the minimal
      polynomial of the endomorphism generating $L_0$ over $\Q$.  If $L_0
      \simeq \Q \times \Q$ splits, then by section \ref{sec:splitjac} we can
      split the Jacobian up to isogeny as the product of elliptic curves, and
      from there deduce the geometric endomorphism algebra and endomorphism
      ring.
    \item To conclude, suppose that $L_0$ is a (necessarily real) quadratic
      field.  Then by \eqref{eqn:rhotoEnd} we need to distinguish between RM
      and CM. We apply Lemma \ref{lem:endJFalg} to search for a candidate CM
      field or to rule out the CM possibility, by finding two nonisomorphic
      candidate CM fields. This approach is analogous to Lombardo's approach
      \cite[\S 6.3]{lombardo-endos},  and we refer to his work for a careful
      exposition.
\end{enumerate}

In practice, this method is very efficient to find sharp upper bounds, using
only a few small primes.

\begin{example}
    While computing the upper bound for all 66\,158 genus 2 curves in the LMFDB
    database, we only had to study their reductions for $p \leq 53$ and for
    more than 96\%  of the curves $p \leq 19$ was sufficient.  The unique curve
    requiring $p=53$ was the curve
    \href{http://www.lmfdb.org/Genus2Curve/Q/870400/a/870400/1}{\texttt{870400.a.870400.1}}:
    the prime $p=53$ is the first prime of good reduction for which the $2$
    elliptic curve factors are not geometrically isogenous modulo $p$.
    Altogether, computing these upper bounds took less than 7 CPU minutes.
\end{example}

\subsection{Endomorphism algebras over finite fields}

Starting in this section, we now consider upper bounds in higher genus. In this
section, we compute the dimension of the geometric endomorphism algebra of an
abelian variety over a finite field from the characteristic polynomial of
Frobenius.  We will apply this to reductions of an abelian variety in the next
sections.

First a bit of notation.  Let $R$ be a (commutative) domain and let $M$ be a
free $R$-module of finite rank $n$.  Let $\pi \in \End_R(M)$ be an $R$-linear
operator, and let
\begin{equation}
    c(T)=\det(1-\pi T\,|\, M) \in 1+TR[T]
\end{equation}
be its characteristic polynomial acting on $M$.  For $r \geq 1$, we define
\begin{equation} \label{eqn:crTtwice}
    \begin{aligned}
        c^{(r)}(T) &\colonequals \det(1-\pi^r T\,|\, M) \\
        c^{\otimes r}(T) &\colonequals \det(1-\pi^{\otimes r}T \,|\, M^{\otimes r})
    \end{aligned}
\end{equation}
We have $\deg c^{(r)}(T)=\deg c(T)=n$ and $\deg c^{\otimes r}(T)=n^r$.  If
$c(T)=\prod_{i=1}^{n}(1-z_i T)$ with $z_i \in R$, then
$c^{(r)}(T)=\prod_{i=1}^n (1-z_i^r T)$ and
\begin{equation}
    c^{\otimes r}(T)=\prod_{1 \leq i_1,\dots,i_r \leq n} (1-z_{i_1}\cdots z_{i_r} T).
\end{equation}
We may compute $c^{\otimes 2}$ as a polynomial resultant
\begin{equation}
    c^{\otimes 2} (T) \colonequals \operatorname{Res}_{z} ( c(z) , z^n c(T/z) )
    \in \Z[T].
\end{equation}

Let $A$ be an abelian variety over the finite field $\F_q$ with $\dim A=g$, let
$A^{\alg} = A_{\F_q^{\alg}}$ be its base change to an algebraic closure
$\F_q^{\alg}$, and let $\Frob_q$ be the $q$-power Frobenius automorphism. We
write
\begin{equation}
    c(T) \colonequals
    \det(1-\Frob_q T \,|\, H_{\et}^1(A^{\alg},\Q_\ell)) \in 1+T\Z[T]
\end{equation}
for a prime $\ell \nmid q$ (with $c (T)$ independent of $\ell$). Then $\deg
c=2g$.  By the Riemann hypothesis (a theorem in this setting), the reciprocal
roots of the polynomial $c^{\otimes 2}(T)$ have complex absolute value $q$.
Factor
\begin{equation}
    \label{eqn:fpfact}
    c^{\otimes 2} (  T ) = h(T) \prod_{i} \Phi_{k_i}( q T),
\end{equation}
over $\Z[T]$ where $\Phi_{k_i}(T)$ is a cyclotomic polynomials (the minimal
polynomial of a primitive $k_i$th root of unity) for each $i$ and $h(T)$ is a
polynomial with no reciprocal roots of the form $q$ times a root of unity.  (In
the factorization \eqref{eqn:fpfact}, we allow repetition $k_i=k_j$ for $i \neq
j$.)

We now recall a consequence of the (proven) Tate conjecture suitable for our
algorithmic purposes.

\begin{lemma}\label{lem:endfq}
    The following statements hold.

    \begin{enumalph}
    \item For all $r \geq 1$, factoring as in \eqref{eqn:fpfact} we have
        \begin{equation}
            \dim_\Q \End\bigl(A_{\F_{q^r}} \bigr)_\Q
            = \sum_{ k_i \mid r } \deg \Phi_{k_i}.
        \end{equation}
    \item Let $k \colonequals \lcm \{k_i\}_i$. Then $\F_{q^k}$ is the minimal
      field over which $\End(A^{\alg})$ is defined.
    \end{enumalph}
\end{lemma}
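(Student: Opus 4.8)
The plan is to reduce both parts to linear algebra on $\ell$-adic cohomology via Tate's theorems. Write $V \colonequals H_{\et}^1(A^{\alg},\Q_\ell)$, let $\pi = \Frob_q$ act on $V$, and let $z_1,\dots,z_{2g}$ be the eigenvalues of $\pi$ --- equivalently, the reciprocal roots of $c(T)$ --- listed with multiplicity. For each $r \geq 1$, Tate's theorem \cite{tate} gives $\dim_\Q \End(A_{\F_{q^r}})_\Q = \dim_{\Q_\ell} \End_{\Q_\ell[\pi^r]}(V)$, the dimension of the centralizer of $\pi^r = \Frob_{q^r}$ in $\End_{\Q_\ell}(V)$, and $\pi$ (hence $\pi^r$) acts semisimply on $V$. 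Base-changing to $\Qbar_\ell$, this centralizer is a product of matrix algebras indexed by the distinct eigenvalues of $\pi^r$, of total dimension $\sum_w n_w^2$ where $n_w \colonequals \#\{i : z_i^r = w\}$; summing over $w$ yields
\begin{equation}
    \dim_\Q \End(A_{\F_{q^r}})_\Q = \#\{(i,j) : 1 \leq i,j \leq 2g,\ z_i^r = z_j^r\}. \tag{$\ast$}
\end{equation}

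The next step is to match the right-hand side of $(\ast)$ with the cyclotomic part of $c^{\otimes 2}(T) = \prod_{i,j}(1 - z_i z_j T)$. Since $c \in \Z[T]$, the multiset $\{z_i\}$ is stable under complex conjugation, and by the Riemann hypothesis $z_i \bar z_i = q$; hence $z \mapsto q/z$ is realized by an involution $\sigma$ of the index set with $z_{\sigma(j)} = q/z_j$. Fix $r$. For any $i,j$ one has $z_i^r = z_j^r$ if and only if $z_i/z_j$ is a root of unity of order dividing $r$, if and only if $z_i z_{\sigma(j)} = q(z_i/z_j)$ is $q$ times such a root of unity; as $\sigma$ is a bijection, the count in $(\ast)$ therefore equals the number of reciprocal roots $z_i z_j$ of $c^{\otimes 2}(T)$, counted with multiplicity, of the form $q\zeta$ with $\zeta^r = 1$. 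In the factorization $c^{\otimes 2}(T) = h(T) \prod_i \Phi_{k_i}(qT)$ the factor $\Phi_{k_i}(qT)$ contributes exactly the reciprocal roots $q\zeta$ with $\zeta$ of exact order $k_i$ --- there are $\deg \Phi_{k_i}$ of them --- while $h$ contributes none of this shape; so that count is $\sum_{k_i \mid r} \deg \Phi_{k_i}$, which together with $(\ast)$ proves (a).

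For (b), note that $\End(A^{\alg})_\Q = \bigcup_r \End(A_{\F_{q^r}})_\Q$ is a directed union of subspaces of a finite-dimensional algebra, so $\dim_\Q \End(A^{\alg})_\Q = \max_r \dim_\Q \End(A_{\F_{q^r}})_\Q = \sum_i \deg \Phi_{k_i}$ by (a), with equality in (a) attained exactly when $k \colonequals \lcm\{k_i\}_i$ divides $r$. Hence $\dim_\Q \End(A_{\F_{q^r}})_\Q = \dim_\Q \End(A^{\alg})_\Q$ if and only if $k \mid r$, and since $\End(A_{\F_{q^r}})_\Q \subseteq \End(A^{\alg})_\Q$ this dimension equality is equivalent to $\End(A_{\F_{q^r}})_\Q = \End(A^{\alg})_\Q$, which in turn is equivalent to $\End(A_{\F_{q^r}}) = \End(A^{\alg})$: indeed $\End(A^{\alg})$ is $\Z$-torsion-free, so an endomorphism $\phi$ is fixed by $\Frob_{q^r}$ --- that is, defined over $\F_{q^r}$ --- as soon as some nonzero multiple $n\phi$ is (apply $\Frob_{q^r}-1$ and divide by $n$). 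Therefore $\End(A^{\alg})$ is defined over $\F_{q^r}$ precisely when $k \mid r$, so $\F_{q^k}$ is the minimal such field.

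The step I expect to be most delicate is the bookkeeping in the second paragraph: one must check that the involution $\sigma$ respects eigenvalue multiplicities, so that the passage from pairs $(i,j)$ with $z_i^r = z_j^r$ to reciprocal roots of $c^{\otimes 2}$ of the form $q\zeta$ with $\zeta^r = 1$ is an honest bijection, and that no reciprocal root of $c^{\otimes 2}$ equal to $q$ times a root of unity is absorbed into $h(T)$ --- both of which come down to applying the Weil bound $|z_i| = q^{1/2}$ and the functional equation of $c$ with care. The semisimplicity of $\pi$, which is what makes $(\ast)$ clean, should be quoted from \cite{tate} rather than reproved.
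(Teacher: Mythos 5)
Your proof is correct and takes essentially the same route as the paper: both invoke Tate's theorem together with semisimplicity of Frobenius to express $\dim_\Q \End(A_{\F_{q^r}})_\Q$ as a count of eigenvalue pairs, and then match that count against the cyclotomic factors $\Phi_{k_i}(qT)$ of $c^{\otimes 2}(T)$. The only real difference is cosmetic: the paper applies the Tate conjecture to $A \times A$ (Galois invariants of $H^1 \otimes H^1(1)$), which produces the products $z_i z_j$ directly and so avoids your bookkeeping with the involution $z \mapsto q/z$, while in part (b) you spell out the descent and torsion-freeness details that the paper compresses into ``by maximality.''
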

\begin{proof}
    Let $r \geq 1$.
    The Tate conjecture (proven by Tate \cite[Theorem 4]{tate}) applied to
    $A \times A$ over $\F_{q^r}$  implies
    \begin{equation}
        \End \bigl(A_{\F_{q^r}} \bigr) \otimes \Q_\ell \simeq
        \bigl(
            H_{\et}^1\bigl(A^{\alg}, \Q_\ell \bigr) ^{\otimes 2}  (1)
        \bigr)^{\Gal(\F^{\alg}_q\,|\, \F_{q^r})}.
    \end{equation}
    Factoring $c(T)=\prod_i (1-z_i T) \in \C[T]$, so that $c^{\otimes 2}(T) =
    \prod_{i,j} (1-z_i z_j T)$, we have
    \begin{equation} \label{eqn:dimQcyc}
        \begin{aligned}
            \dim_\Q \End\bigl(A_{\F_{q^r}} \bigr)_\Q
            &= \# \bigl\{ (i,j) : (z_i z_j)^r = q^r \bigr\}\\
            &= \# \bigl\{ (i,j) : z_i z_j
        = \zeta q \text{ with } \zeta^r=1\bigr\} \\
        &= \sum_{ k_i \mid r } \deg \Phi_{k_i}.
        \end{aligned}
    \end{equation}
    (Working over $\F_q$, in Tate's notation we have $\dim_\Q
    \End(A_{\F_{q}})_\Q = r(f,f)$, where $f$ is the characteristic polynomial of Frobenius and $r(f,f)$ is
    the multiplicity of the root $q$ in $f^{\otimes 2}$.)  This proves (a).

    The sum in \eqref{eqn:dimQcyc} attains its maximum value for the first time
    when $r=k = \lcm_i \{k_i\}_i$, and by maximality we have $\End \bigl(
    A^{\alg} \bigr) =  \End\bigl(A_{\F_{q^k}}\bigr)$, which proves (b).
\end{proof}

We will make use of the following more specialized statement.

\begin{corollary} \label{cor:dimQdimA}
    Suppose that $c(T)$ is separable and that the subgroup of $\Qbar^\times$
    generated by the (reciprocal) roots is torsion free.  Then all
    endomorphisms of $A^{\alg}$ are defined over $\F_q$ (i.e., $k=1$) and
    \begin{equation*}
        \dim_\Q \End(A)_\Q = 2\dim A.
    \end{equation*}
\end{corollary}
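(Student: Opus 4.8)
The plan is to deduce both assertions from Lemma~\ref{lem:endfq} by pinning down the factorization \eqref{eqn:fpfact} of $c^{\otimes 2}(T)$ under the two hypotheses. Write $c(T)=\prod_{i=1}^{2g}(1-z_iT)$ with the $z_i$ the Frobenius eigenvalues. Separability of $c(T)$ says the $z_i$ are pairwise distinct, and then $c^{\otimes 2}(T)=\prod_{1\le i,j\le 2g}(1-z_iz_jT)$, so its reciprocal roots are exactly the $4g^2$ products $z_iz_j$ counted with multiplicity.

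First I would compute the multiplicity with which $q$ occurs as a reciprocal root of $c^{\otimes 2}(T)$. By the functional equation satisfied by the characteristic polynomial of Frobenius on an abelian variety, the map $z\mapsto q/z$ permutes the set $\{z_1,\dots,z_{2g}\}$; since the $z_i$ are distinct, for each $i$ there is a \emph{unique} index $j=j(i)$ with $z_iz_{j(i)}=q$. Hence $q$ occurs with multiplicity exactly $2g$ among the $z_iz_j$; equivalently, $\Phi_1(qT)=1-qT$ divides $c^{\otimes 2}(T)$ precisely $2g$ times in the notation of \eqref{eqn:fpfact}.

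Next I would invoke torsion-freeness to rule out all other cyclotomic factors. Suppose $z_iz_j=q\zeta$ for some root of unity $\zeta$. Fixing a pair with $z_1z_{j(1)}=q$ from the previous step, the quotient $z_iz_j/(z_1z_{j(1)})=\zeta$ lies in the subgroup of $\Qbar^\times$ generated by the $z_i$, which is torsion free by hypothesis; therefore $\zeta=1$ and $z_iz_j=q$. Consequently in \eqref{eqn:fpfact} every $k_i=1$, and the number of cyclotomic factors equals the multiplicity of $q$, namely $2g$. Now Lemma~\ref{lem:endfq}(b) gives $k=\lcm\{k_i\}_i=1$, so all endomorphisms of $A^{\alg}$ are already defined over $\F_q$; and Lemma~\ref{lem:endfq}(a) with $r=1$ gives $\dim_\Q\End(A)_\Q=\sum_{k_i\mid 1}\deg\Phi_{k_i}=2g=2\dim A$.

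This argument is essentially bookkeeping on top of Lemma~\ref{lem:endfq}, so I do not anticipate a real obstacle; the only points that need care are exactly where the two hypotheses enter. Separability is what forces the multiplicity of $q$ in $c^{\otimes 2}(T)$ to be precisely $2g$ rather than larger (a repeated eigenvalue would inflate it, and then the displayed dimension count would fail), while torsion-freeness is what kills all cyclotomic factors $\Phi_{k_i}(qT)$ with $k_i>1$ and hence both forces $k=1$ and prevents $\dim_\Q\End(A)_\Q$ from exceeding $2\dim A$.
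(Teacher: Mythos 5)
Your proof is correct and follows essentially the same route as the paper: separability plus the functional equation $z\mapsto q/z$ to show $q$ occurs as a reciprocal root of $c^{\otimes 2}(T)$ with multiplicity exactly $2\dim A$, torsion-freeness to exclude all cyclotomic factors $\Phi_{k_i}(qT)$ with $k_i>1$, and then Lemma~\ref{lem:endfq} to conclude.
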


\begin{proof}
    Factor $c(T)=\prod_{i=1}^{2\dim A} (1-z_i T)$ over $\Qbar$.  Since $c(T)$
    is separable, its reciprocal roots $z_i \in \Qbar^\times$ are distinct.
    Suppose that $z_iz_j=\zeta q$ where $\zeta$ is a root of unity.  By the
    (proven) Riemann hypothesis for abelian varieties, associated to $z_i$ is a
    reciprocal root $z_{i'}$ such that $z_iz_{i'}=q$.  By separability, the
    index $i'$ is uniquely determined by $i$.

    We now have $z_j/z_{i'}=\zeta$.  Therefore $\zeta=1$ since the subgroup
    generated by the roots is torsion free.  By distinctness of the roots we
    obtain $i'=j$.  So among the reciprocal roots $z_i z_j$ of $c^{\otimes
    2}(T)$ there are exactly $2\dim A$ pairs $(i,j)$ with $z_i z_j=q$.  We have
    shown that in the factorization \eqref{eqn:fpfact} there are $2\dim A$
    factors $\Phi_1(qT) = 1-qT$, all with $k_i=1$, and no other cyclotomic
    factors.  The result then holds by Lemma \ref{lem:endfq}(a).
\end{proof}

Lemma \ref{lem:endfq} immediately implies that
\begin{equation}
    \dim_\Q \End\bigl(A^{\alg})_\Q = \sum_i \deg \Phi_{k_i}
\end{equation}
so we have direct access to the dimension of the geometric endomorphism algebra
from the characteristic polynomial of Frobenius.

\begin{remark}
  \label{remark:hondatate}
    Although we will not use this in what follows, we can upgrade
    Lemma~\ref{lem:endfq} from dimensions to a full description of the
    endomorphism algebra itself up to isomorphism by the use of Honda--Tate
    theory, as follows. Factor
    \begin{equation}
        c(T) = c_1(T)^{m_1} \cdots c_t(T)^{m_t}
    \end{equation}
    where $c_i$ are distinct, irreducible polynomials in $\Z[T]$. Applying
    Honda--Tate theory, see for example Waterhouse \cite[Chapter
    2]{waterhouse-69} and Waterhouse--Milne \cite[Theorem
    8]{waterhouse-milne-71}, each irreducible polynomial $c_i(T)$ determines
    (by the $p$-adic valuation of its coefficients) a division algebra $B_i$
    over $\Q$ such that $B_i$ is central over the field $L_i \colonequals
    \Q[T]/(c_i(T))$ and $e_i^2 \colonequals \dim_{L_i} B_i$ has $e_i \mid m_i$;
    these combine to give
    \begin{equation}
        \End(A)_\Q \simeq B_1^{n_1} \times \dots \times B_t^{n_t}
    \end{equation}
    where $n_i=m_i/e_i$. This decomposition of the endomorphism algebra
    corresponds to the decomposition of $A$ up to isogeny over $\F_q$ as
    \begin{equation}
        A \sim A_1^{n_1} \times \dots \times A_t^{n_t}
    \end{equation}
    where the abelian varieties $A_i$ over $\F_q$ are simple and pairwise
    nonisogenous over $\F_q$, and $\End(A_i)_\Q \simeq B_i$.

    We can apply this to compute the structure of the geometric endomorphism
    algebra by computing $k$ as in Lemma \ref{lem:endfq} and applying the above
    to $c^{(k)}(T)$.
\end{remark}

To conclude, we extract another description of the dimension of the
endomorphism algebra, again due to Tate.

\begin{lemma} \label{lem:tateic}
    Factor $c(T)=\prod_{i}^s (1-z_i T)^{m(z_i)} \in \C[T]$ with $z_i \in \C$
    distinct.  Then
    \begin{equation}
        \dim_\Q \End(A)_\Q = \sum_{i=1}^s m(z_i)^2.
    \end{equation}
\end{lemma}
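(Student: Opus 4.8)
The plan is to relate this statement to Lemma~\ref{lem:endfq}(a) by translating the root-multiplicity bookkeeping on $c^{\otimes 2}(T)$ into the multiplicity bookkeeping on $c(T)$ itself. By Lemma~\ref{lem:endfq}(a) applied with $r=1$, we have $\dim_\Q \End(A)_\Q = \sum_{k_i \mid 1} \deg \Phi_{k_i} = \#\{\,i : k_i = 1\,\}$, which by the factorization \eqref{eqn:fpfact} is exactly the number of reciprocal roots of $c^{\otimes 2}(T)$ equal to $q$. So it suffices to prove that the multiplicity of $q$ as a reciprocal root of $c^{\otimes 2}(T)$ equals $\sum_{i=1}^s m(z_i)^2$.

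The key step is a counting argument using the functional equation. Factor $c(T) = \prod_i (1-z_i T)^{m(z_i)}$ over $\C$, so that $c^{\otimes 2}(T) = \prod_{i,j} (1 - z_i z_j T)^{m(z_i) m(z_j)}$ by the product formula for $c^{\otimes 2}$ recalled just before the lemma. The multiplicity of $q$ as a reciprocal root of $c^{\otimes 2}(T)$ is therefore $\sum_{(i,j)\,:\,z_i z_j = q} m(z_i) m(z_j)$. Now I invoke the functional equation for $c(T)$ coming from the Weil pairing (Poincaré duality on $H^1$): the map $z \mapsto q/z$ permutes the reciprocal roots and preserves multiplicities, so it induces an involution $i \mapsto i^\vee$ on the index set with $z_i z_{i^\vee} = q$ and $m(z_{i^\vee}) = m(z_i)$. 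Crucially, because the $z_i$ are \emph{distinct}, the condition $z_i z_j = q$ forces $j = i^\vee$ uniquely. Hence the sum collapses to $\sum_i m(z_i) m(z_{i^\vee}) = \sum_i m(z_i)^2$, as desired.

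The main obstacle — really the only thing requiring care — is justifying that $z \mapsto q/z$ is a genuine involution on the multiset of reciprocal roots of $c(T)$, i.e.\ that it preserves multiplicities and not merely the underlying set. This is the statement that $c(T)$ satisfies $c(T) = \pm (qT)^{\deg c}\, c(1/(qT))$ up to the appropriate normalization, equivalently that $\Frob_q$ acts on $H^1_{\et}(A^{\alg},\Q_\ell)$ with the reciprocal-root symmetry $z \leftrightarrow q/z$ respecting Jordan block structure. For $A/\F_q$ this is classical: it follows from the Weil pairing $H^1 \times H^1 \to \Q_\ell(-1)$ together with semisimplicity of $\Frob_q$ on $H^1$ (Weil, Tate) — since $\Frob_q$ is semisimple here, the multiset of reciprocal roots carries all the information and the pairing shows it is stable under $z \mapsto q/z$. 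Once this symmetry is in hand, the distinctness of the $z_i$ does the rest, and the identification with $\sum m(z_i)^2$ via Lemma~\ref{lem:endfq}(a) is immediate. (One could alternatively bypass Lemma~\ref{lem:endfq} and read off $\dim_\Q\End(A)_\Q$ directly from the Tate conjecture as $\dim_{\Q_\ell}\bigl(H^1(A^{\alg},\Q_\ell)^{\otimes 2}(1)\bigr)^{\Gal}$, but routing through the already-proven Lemma keeps the argument short.)
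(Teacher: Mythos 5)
Your argument is correct, but it takes a different route from the paper. The paper proves this lemma by a one-line citation of Tate's theorem: in Tate's notation the right-hand side is exactly $r(f,f)$ with $f=c$, i.e.\ the formula $\dim_\Q\End(A)_\Q=r(f,f)=\sum_i m(z_i)^2$ is quoted directly from \cite[Theorem 1(a), proof of Theorem 2(b)]{tate}. You instead derive it from Lemma~\ref{lem:endfq}(a) with $r=1$, identifying $\dim_\Q\End(A)_\Q$ with the multiplicity of $q$ as a reciprocal root of $c^{\otimes 2}(T)$, and then converting that multiplicity into $\sum_i m(z_i)^2$ via the symmetry $z\mapsto q/z$ on the Frobenius eigenvalues: since the $z_i$ are distinct, $z_iz_j=q$ pins down $j=i^{\vee}$, and the multiplicity-preserving nature of the involution gives $m(z_{i^\vee})=m(z_i)$, so the count collapses as you say. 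There is no circularity, since Lemma~\ref{lem:endfq} is proved independently from the Tate conjecture, and your justification of the involution is sound (one can even shortcut your Weil-pairing/semisimplicity discussion: $c(T)\in\Z[T]$ has real coefficients, so complex conjugation preserves root multiplicities, and the Riemann hypothesis $|z_i|=q^{1/2}$ gives $\bar z_i=q/z_i$, which yields $m(q/z_i)=m(z_i)$ at once). What your route buys is that it makes explicit the equivalence between ``multiplicity of $q$ in $c^{\otimes 2}$'' and ``$\sum_i m(z_i)^2$'', a translation the paper only gestures at in the parenthetical remark inside the proof of Lemma~\ref{lem:endfq}; what the paper's route buys is brevity, since the statement is literally Tate's $r(f,f)$ formula.
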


\begin{proof}
    See Tate \cite[Theorem 1(a), Proof of Theorem 2(b)]{tate}; in his notation
    $f = c$ and the right hand side is $r(f,f)$.
\end{proof}

\subsection{Upper bounds in higher genus: decomposition into powers}

In the next two sections, we discuss how to produce tight upper bounds on the
dimension of the geometric endomorphism algebra for a general abelian variety,
under certain hypotheses. Our approach will be analogous to
section~\ref{sec:71g2}, however, instead of studying the reduction homomorphism
induced on the N\'{e}ron--Severi lattice, we will study the reduction
homomorphism induced on the endomorphism rings themselves.

These bounds come in two phases.  In the first phase, described in this
section, we describe a decomposition of an abelian variety over a number field
into powers of geometrically simple abelian varieties. In the next phase,
described in the next section, we refine this decomposition to bound the
dimension of the geometric endomorphism algebra by examination of the center.

We work in slightly more generality in these two sections than in the rest of
the paper. Let $A$ be an abelian variety of a number field $F$ (not necessarily
the Jacobian of a curve). Let $F_A$ be the minimal field over which $\End(A)$
is defined. Let $\frakp$ be a nonzero prime of (the ring of integers) of $F$,
let $\F_\frakp$ be its residue field with $q=\#\F_\frakp$, and let
$\Frob_\frakp$ be the $q$-power Frobenius automorphism. For $r \geq 1$, we
denote $\F_{\frakp^r} \supseteq \F_\frakp$ the finite extension of degree $r$
in an algebraic closure $\F_{\frakp}^{\alg}$.

Suppose that $A$ has good reduction $A_{\frakp}$ at $\frakp$.  Write
\begin{equation}  \label{eqn:cpTdef}
    c_{\frakp}(T) \colonequals
    \det(1 - \Frob_\frakp T\,|\, H_{\et}^1(A_{\frakp}^{\alg},\Q_\ell))
    \in 1+T\Z[T]
\end{equation}
for the characteristic polynomial of $\Frob_\frakp$ acting on the first
$\ell$-adic \'etale cohomology group (independent of $\ell \nmid q$).

The reduction (specialization) of an endomorphism modulo $\frakp$ induces an
injective ring homomorphism
\begin{equation}
    s_\frakp\colon \End(A^{\alg}) \hookrightarrow \End(A_{\frakp}^{\alg}).
\end{equation}
Therefore $\dim_\Q \End(A^{\alg})_\Q \leq \dim_\Q \End(A_{\frakp}^{\alg})_\Q$.
However, unless $A$ is a CM abelian variety, this inequality will always be
strict, so we undertake a more careful analysis.

Up to isogeny over $F_A$, we factor
\begin{equation} \label{eqn:AFAdecomp}
    A_{F_A} \sim A_1 ^{n_1} \times \cdots \times A_t ^{n_t}
\end{equation}
where $A_i$ are geometrically simple and pairwise nonisogenous abelian
varieties over $F_A$. Let $B_i \colonequals \End(A_i^{\alg})_\Q$ be the
geometric endomorphism algebra of $A_i$, let $L_i \colonequals Z(B_i)$ be the
center of $B_i$, and write $e_i ^2  \colonequals \dim_{L_i} (B_i)$ with $e_i
\in \Z_{\geq 1}$. We have
\begin{equation} \label{eqn:AalgB}
    \End(A^{\alg})_\Q \simeq \M_{n_1}(B_1) \times \dots \times \M_{n_t}(B_t).
\end{equation}

For the prime $\frakp$, we define $k_\frakp$ to be the smallest integer such
that $\End(A_\frakp^{\alg})$ is defined over $\F_{\frakp^{k_\frakp}}$. The
polynomial $c_{\frakp,i}^{(k_\frakp)}(T)$ (see  \eqref{eqn:crTtwice}) is the
Frobenius polynomial for $A_i$ over $\F_{\frakp^{k_\frakp}}$.

\begin{proposition}    \label{prop:lowerbound}
    The following statements hold.
    \begin{enumalph}
    \item For every $i=1,\dots,t$, there exists $g_{\frakp,i}(T) \in 1 + T \Z[T]$
        such that
        \begin{equation} \label{prop:lowerboundeqa}
            c_{\frakp,i}^{(k_\frakp)}(T) = g_{\frakp,i}(T)^{e_i}.
        \end{equation}
    \item We have
        \begin{equation} \label{eqn:fundineqAis}
            2 \sum_{i=1}^t e_i n_i^2 \dim A_i =
            \sum_{i = 1} ^t e_i ^2 n_i ^2 \deg g_{\frakp,i} \leq \dim_\Q \End(A_\frakp^{\alg})_\Q
        \end{equation}
        and equality is obtained in \eqref{eqn:fundineqAis} if and only if the
        polynomials $g_{\frakp,i}(T)$ in \textup{(a)} are separable and
        pairwise coprime.
    \end{enumalph}
\end{proposition}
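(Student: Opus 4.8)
The plan is to prove (a) by a linear-algebra argument on the $\ell$-adic Tate module, and then to deduce (b) from (a) using Lemma~\ref{lem:tateic} and an elementary inequality on root multiplicities. For (a) I fix $i$ and first record a bookkeeping fact: if $\mathfrak{P}\mid\frakp$ is a prime of $F_A$ with residue degree $d$, then $d\mid k_\frakp$. Indeed, by minimality of $F_A$ the Galois action on $\End(A^{\alg})_\Q$ factors \emph{faithfully} through $\Gal(F_A\,|\,F)$, so a Frobenius element at $\frakp$ has order $d$ there; transporting along the Galois-equivariant reduction map $s_\frakp$, it acts with order $d$ on the subring $s_\frakp(\End(A^{\alg}))$ of $\End(A_\frakp^{\alg})$, hence with order divisible by $d$ on all of $\End(A_\frakp^{\alg})$ — and that order is exactly $k_\frakp$ by definition. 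It follows that the isogeny $A_{F_A}\sim\prod_j A_j^{n_j}$ base-changes to one over $\F_{\frakp^{k_\frakp}}$, and that all geometric endomorphisms of each $A_{i,\frakp}$ are defined over $\F_{\frakp^{k_\frakp}}$.

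Now for the heart of (a): reduction gives a $\Q$-algebra embedding $B_i\hookrightarrow\End(A_{i,\frakp}^{\alg})_\Q$, and the Frobenius endomorphism $\pi:=\Frob_\frakp^{k_\frakp}$ of $A_{i,\frakp}$ lies in the center of $\End(A_{i,\frakp}^{\alg})_\Q$ — it commutes with every endomorphism defined over $\F_{\frakp^{k_\frakp}}$, and by the previous paragraph all geometric ones are — so $\pi$ commutes with $B_i$. Since $B_i$ is a central simple $L_i$-algebra of degree $e_i$, we get $B_i\otimes_\Q\Qbar_\ell\cong\prod_{\sigma\colon L_i\hookrightarrow\Qbar_\ell}\M_{e_i}(\Qbar_\ell)$, a finite product of \emph{full} matrix algebras. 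In $V_\ell(A_{i,\frakp})\otimes_{\Q_\ell}\Qbar_\ell$ the generalized $z$-eigenspace $V_z$ of $\pi$ is stable under $B_i\otimes\Qbar_\ell$ (which commutes with $\pi$), hence is a unital module over $\prod_\sigma\M_{e_i}(\Qbar_\ell)$, and so $e_i\mid\dim_{\Qbar_\ell}V_z$. But $\dim_{\Qbar_\ell}V_z$ equals the multiplicity $m(z)$ of $z$ as a reciprocal root of $c_{\frakp,i}^{(k_\frakp)}(T)=\det\bigl(1-\pi T\mid V_\ell(A_{i,\frakp})\bigr)$. Hence, setting $g_{\frakp,i}(T):=\prod_z(1-zT)^{m(z)/e_i}$, we have $c_{\frakp,i}^{(k_\frakp)}(T)=g_{\frakp,i}(T)^{e_i}$; since Galois-conjugate reciprocal roots occur with equal multiplicity, $g_{\frakp,i}\in\Q[T]$ with constant term $1$, and since $g_{\frakp,i}^{e_i}\in 1+T\Z[T]$, comparing $p$-adic contents (Gauss) shows $g_{\frakp,i}\in 1+T\Z[T]$, proving (a).

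For (b), comparing degrees in (a) gives $\deg g_{\frakp,i}=2\dim A_i/e_i$, and multiplying by $e_i^2n_i^2$ and summing yields the left-hand equality $\sum_i e_i^2n_i^2\deg g_{\frakp,i}=2\sum_i e_in_i^2\dim A_i$. For the inequality, the descended isogeny decomposition gives $c_\frakp^{(k_\frakp)}(T)=\prod_i c_{\frakp,i}^{(k_\frakp)}(T)^{n_i}=\prod_i g_{\frakp,i}(T)^{e_in_i}$ by (a). Writing $g_{\frakp,i}(T)=\prod_b(1-w_{i,b}T)^{\mu_{i,b}}$ with the $w_{i,b}$ distinct for each fixed $i$, the multiplicity of a reciprocal root $z$ of $c_\frakp^{(k_\frakp)}$ is $m(z)=\sum_{(i,b):\,w_{i,b}=z}e_in_i\mu_{i,b}$, and $\deg g_{\frakp,i}=\sum_b\mu_{i,b}$. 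By Lemma~\ref{lem:tateic} applied over $\F_{\frakp^{k_\frakp}}$ — the field of definition of $\End(A_\frakp^{\alg})$, cf.\ Lemma~\ref{lem:endfq}(b) — we have $\dim_\Q\End(A_\frakp^{\alg})_\Q=\sum_z m(z)^2$. Using $(\sum_k c_k)^2\ge\sum_k c_k^2$ for nonnegative reals, and then $\mu_{i,b}^2\ge\mu_{i,b}$, gives
\[
\dim_\Q\End(A_\frakp^{\alg})_\Q=\sum_z\Bigl(\sum_{(i,b):\,w_{i,b}=z}e_in_i\mu_{i,b}\Bigr)^2\ge\sum_{i,b}e_i^2n_i^2\mu_{i,b}^2\ge\sum_{i,b}e_i^2n_i^2\mu_{i,b}=\sum_i e_i^2n_i^2\deg g_{\frakp,i},
\]
which is \eqref{eqn:fundineqAis}. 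Equality holds iff both steps are equalities: the first forces at most one pair $(i,b)$ per value $z$ — equivalently the $g_{\frakp,i}$ pairwise coprime — and the second forces every $\mu_{i,b}=1$ — equivalently each $g_{\frakp,i}$ separable; conversely these two conditions clearly give equality.

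I expect the main obstacle to be arranging (a) correctly rather than any single hard computation: the decisive step is to move the question onto the Tate module and exploit that $B_i\otimes\Qbar_\ell$ is a product of \emph{full} matrix algebras $\M_{e_i}(\Qbar_\ell)$, which forces $e_i\mid m(z)$ for every Frobenius-eigenvalue multiplicity; the surrounding bookkeeping ($d\mid k_\frakp$, rationality of $\End(A_{i,\frakp}^{\alg})$ over $\F_{\frakp^{k_\frakp}}$, descent of the decomposition) is routine but must be handled with care, after which (b) is purely the two-step elementary inequality displayed above.
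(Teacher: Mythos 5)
Your proof is correct, and part (a) is where you genuinely diverge from the paper. For (a) the paper does not argue directly: it passes to the connected monodromy field $\FAconn \supseteq F_A$ and quotes Zywina's Lemma~6.3(i), noting that this part of his lemma needs only good reduction; your version instead proves the divisibility from scratch, by observing that the reduction of $B_i$ commutes with the Frobenius endomorphism $\pi=\Frob_\frakp^{k_\frakp}$, so each generalized $\pi$-eigenspace in $V_\ell(A_{i,\frakp})\otimes\overline{\Q}_\ell$ is a unital module over $B_i\otimes_\Q\overline{\Q}_\ell\simeq\prod_\sigma \M_{e_i}(\overline{\Q}_\ell)$ and hence has dimension divisible by $e_i$ (this is in essence the content of Zywina's lemma, so you have replaced a citation by a self-contained argument; the paper's route buys brevity and a uniform reference, yours buys independence from \cite{zywina-14} for this step). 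Two small points to tighten in your (a): your assertion that a Frobenius at $\frakp$ ``has order $d$'' in $\Gal(F_A\,|\,F)$ presupposes that $\frakp$ is unramified in $F_A$ — this is true at primes of good reduction, and in fact follows from the same injectivity-plus-equivariance of $s_\frakp$ that you already use (inertia must act trivially, hence is trivial by faithfulness), but it should be said; and the claim that \emph{all} geometric endomorphisms of $A_{i,\frakp}$ are defined over $\F_{\frakp^{k_\frakp}}$ needs a small idempotent/projection argument via the descended isogeny — though you do not actually need it, since the endomorphisms you use are the reductions of $B_i=\End(A_i^{\alg})_\Q$, which are defined over $\F_{\frakp^{d}}\subseteq\F_{\frakp^{k_\frakp}}$ and therefore already commute with $\pi$. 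Your part (b) is essentially the paper's proof: both rest on Lemma~\ref{lem:tateic} applied over $\F_{\frakp^{k_\frakp}}$ together with the elementary multiplicity inequalities, the only difference being that the paper first treats the geometrically simple case and then sums with weights $n_i^2$, while you handle all factors at once via $c_\frakp^{(k_\frakp)}=\prod_i g_{\frakp,i}^{e_in_i}$; the equality analysis (separability from $\mu^2\geq\mu$, coprimality from the cross terms) coincides.
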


\begin{proof}
    We begin by proving (a), and for this purpose we may assume $A=A_i$.
    Following Zywina \cite[\S 2.3]{zywina-14}, let $\FAconn$ be the smallest
    extension of $F$ such that the $\ell$-adic monodromy group associated to
    $A$ is connected over $\FAconn$. Then $\FAconn$ is Galois over $F$ and
    $\FAconn \supseteq F_A$, so all endomorphisms of $A$ are defined over
    $\FAconn$.  If $F=\FAconn$, then (a) is proven by Zywina \cite[Lemma
    6.3(i)]{zywina-14}: part (i) (but not the rest of his Lemma~6.3) only needs
    the hypothesis that $\frakp$ is a prime of good reduction. The general case
    follows by applying the previous sentence to a prime in $\FAconn$ lying
    above $\frakp$.

    Now we prove (b).  We first treat the case where $A=A_i$ is geometrically
    simple, and we drop the subscript $i$. Factor
    \begin{equation}
        g_{\frakp}(T)=\prod_{j} (1 - \gamma_j T)^{m(\gamma_j)} \in \C[T]
    \end{equation}
    where the reciprocal roots $\gamma_j$ are pairwise distinct and occur with
    multiplicity $m(\gamma_j)$.  Then
    \begin{equation}
        \deg g_{\frakp} = \sum_{j} m(\gamma_{j})  \leq \sum_{j} m(\gamma_{j})^2
    \end{equation}
    and the equality is attained if and only if $m(\gamma_j) = 1$ for all $j$,
    in other words, if $g_{\frakp,j}(T)$ is separable. By
    Lemma~\ref{lem:tateic}, since $c^{(k_\frakp)} _\frakp (T)  = g_\frakp (T)
    ^e$, we have
    \begin{equation}
      \dim_\Q \End(A_\frakp^{\alg})_\Q  =  \sum_{j} (e m(\gamma_{j}) )^2.
    \end{equation}
    Since $2\dim A = \deg c_\frakp^{(k_\frakp)} = e \deg g_\frakp$ we conclude
    \begin{equation}
      \label{eqn:inequalitygeosimple}
        2e\dim A = e^2 \deg g_{\frakp} \leq e^2 \sum_j m(\gamma_j)^2 = \dim_\Q \End(A_\frakp^{\alg})_\Q
    \end{equation}
    as claimed.

    Now we treat the general case. For $i=1,\dots,t$, in the notation of part
    (a), factor
    \begin{equation}
        g_{\frakp,i}(T)=\prod_{j}^s (1 - \gamma_{ij} T)^{m(\gamma_{ij})} \in \C[T].
    \end{equation}
    Adding up the inequality \eqref{eqn:inequalitygeosimple}, multiplied by
    $n_i ^2$ throughout, we obtain
    \begin{equation}
        \sum_i e_i ^2 n_i ^2 \deg g_{\frakp, i} \leq \sum_i e_i ^2 n_i ^2 \sum_j m(\gamma_{ij})^2 \leq \dim_\Q \End(A_\frakp^{\alg})_\Q.
    \end{equation}
    As in the previous paragraph, the left-hand inequality is an equality if
    and only if for every $i$ the polynomial $ g_{\frakp, i}(T)$ is separable.
    By Lemma \ref{lem:tateic}, the right-hand inequality is an equality if
    and only if
    \begin{equation}
      z_\ell = \gamma_{ij} \quad \text{and} \quad m(z_\ell) =  e_i n_i m(\gamma_{ij}), 
   \end{equation}
   where
   \begin{equation}
     c_\frakp ^{(k)} (T) = \prod_\ell (1 - z_\ell T) ^{m(z_\ell)}
   \end{equation}
   with $z_\ell$ distinct. In other words, if and only if $\gamma_{ij}$ are all
   distinct, or equivalently, if the polynomials $g_{\frakp,i}$ are pairwise
   coprime.
\end{proof}

We now try to deduce the decomposition \eqref{eqn:AFAdecomp} from
factorizations as in Proposition~\ref{prop:lowerbound}.  From the left-hand
side of \eqref{eqn:fundineqAis}, we define the quantity
\begin{equation}
    \eta(A^{\alg}) \colonequals  \sum_{i=1}^t e_i n_i^2 \dim A_i
\end{equation}
which we would like to know. (The invariant $\eta(A^{\alg})$ plays a similar
role to that of $\rho(J^{\alg})$ in the previous section.) Looking at the
right-hand side of \eqref{eqn:fundineqAis}, for a prime $\frakp$ of good
reduction, we factor
\begin{equation}
    c_{\frakp}^{(k_\frakp)}(T) = \prod_{i=1}^{t_\frakp} h_{\frakp,i}(T)^{m_{\frakp,i}} \in \Z[T]
\end{equation}
into pairwise coprime irreducibles, where $k_\frakp$ can be computed with
Lemma~\ref{lem:endfq} and $t_\frakp$ is the number of pairwise distinct
(simple) factors in the isogeny decomposition of $A_{\F_\frakp^{k_\frakp}}$.
Now we define the computable quantity
\begin{equation}  \label{eqn:computeetap}
    \eta(A^{\alg}_\frakp) \colonequals \sum_{i=1}^{t_\frakp} m_{\frakp,i}^2 \deg h_{\frakp,i} \in 2\Z_{\geq 1}.
\end{equation}
It follows from Lemma \ref{lem:tateic} that $\eta(A^{\alg}_\frakp)= \dim_\Q
\End(A_\frakp^{\alg})_\Q$.

\begin{corollary} \label{cor:inequaleta}
\begin{enumalph}
\item
    For all good primes $\frakp$, we have
    \begin{equation} \label{eqn:2etaaalg}
        \eta(A^{\alg}) \leq \tfrac{1}{2}\eta(A^{\alg} _\frakp).
    \end{equation}
\item If equality holds in \eqref{eqn:2etaaalg}, then $t \leq t_\frakp$.
\item If equality holds in \eqref{eqn:2etaaalg} and $t=t_\frakp$, then
as multisets
    \begin{equation}
        \{(m_{\frakp,i},\tfrac{1}{2}m_{\frakp,i}\deg
h_{\frakp,i})\}_{i=1}^{t_\frakp} = \{(e_in_i, n_i\dim A_i)\}_{i=1}^t.
    \end{equation}
\end{enumalph}
\end{corollary}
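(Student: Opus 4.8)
The plan is to read off all three parts from Proposition~\ref{prop:lowerbound}, using the identity $\eta(A^{\alg}_\frakp)=\dim_\Q\End(A^{\alg}_\frakp)_\Q$ recorded just after \eqref{eqn:computeetap}. Part (a) I would dispatch immediately: Proposition~\ref{prop:lowerbound}(b) reads
\begin{equation*}
    2\eta(A^{\alg})
    = 2\sum_{i=1}^t e_i n_i^2\dim A_i
    = \sum_{i=1}^t e_i^2 n_i^2 \deg g_{\frakp,i}
    \leq \dim_\Q\End(A^{\alg}_\frakp)_\Q
    = \eta(A^{\alg}_\frakp),
\end{equation*}
and dividing by $2$ is exactly \eqref{eqn:2etaaalg}.

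For parts (b) and (c) the first step is to combine the isogeny decomposition with Proposition~\ref{prop:lowerbound}(a). Reducing \eqref{eqn:AFAdecomp} modulo a prime of $F_A$ above $\frakp$ gives $A_\frakp\sim\prod_i A_{i,\frakp}^{n_i}$, so $c_\frakp^{(k_\frakp)}(T)=\prod_{i=1}^t c_{\frakp,i}^{(k_\frakp)}(T)^{n_i}=\prod_{i=1}^t g_{\frakp,i}(T)^{e_i n_i}$. Now assume equality holds in \eqref{eqn:2etaaalg}, equivalently in the inequality of \eqref{eqn:fundineqAis}; then the equality clause of Proposition~\ref{prop:lowerbound}(b) tells us each $g_{\frakp,i}$ is separable and the $g_{\frakp,i}$ are pairwise coprime. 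Each $g_{\frakp,i}$ is also nonconstant, since $c_{\frakp,i}^{(k_\frakp)}(T)=g_{\frakp,i}(T)^{e_i}$ has positive degree $2\dim A_i$, whence in fact $\deg g_{\frakp,i}=2\dim A_i/e_i$. Factoring each $g_{\frakp,i}$ into monic irreducibles — which are distinct within each $i$ by separability and distinct across $i$ by pairwise coprimality — exhibits $\prod_{i=1}^t g_{\frakp,i}(T)^{e_i n_i}$ as the factorization of $c_\frakp^{(k_\frakp)}(T)$ into pairwise coprime irreducibles used in \eqref{eqn:computeetap}. Hence $t_\frakp$ equals the total number of these irreducibles, and since each of the $t$ polynomials $g_{\frakp,i}$ contributes at least one, $t_\frakp\geq t$, which is (b).

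For (c) I would then add the hypothesis $t=t_\frakp$. Forcing the count above to be tight means each $g_{\frakp,i}$ contributes exactly one irreducible, i.e.\ each $g_{\frakp,i}$ is itself irreducible. Comparing the two factorizations $c_\frakp^{(k_\frakp)}(T)=\prod_{i=1}^{t_\frakp}h_{\frakp,i}(T)^{m_{\frakp,i}}=\prod_{i=1}^t g_{\frakp,i}(T)^{e_i n_i}$ into pairwise coprime irreducibles, unique factorization supplies a reindexing under which $h_{\frakp,i}=g_{\frakp,i}$ and $m_{\frakp,i}=e_i n_i$ for each $i$. Then $\deg h_{\frakp,i}=\deg g_{\frakp,i}=2\dim A_i/e_i$, so $\tfrac{1}{2} m_{\frakp,i}\deg h_{\frakp,i}=n_i\dim A_i$, and therefore $(m_{\frakp,i},\tfrac{1}{2} m_{\frakp,i}\deg h_{\frakp,i})=(e_i n_i,n_i\dim A_i)$ for every $i$, giving the asserted multiset identity.

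The routine content here is the bookkeeping with coprime factorizations; the one place I expect to need genuine care — and which I would write out in full — is the identity $c_\frakp^{(k_\frakp)}(T)=\prod_i g_{\frakp,i}(T)^{e_i n_i}$. One has to check that the level $k_\frakp$ (defined through $\End(A^{\alg}_\frakp)$) is large enough that each $c_{\frakp,i}^{(k_\frakp)}(T)$ really is the Frobenius polynomial of $A_i$ over the relevant residue-field extension, and to absorb the harmless residue-degree discrepancy between $\frakp$ and the chosen prime of $F_A$ above it. Everything else follows formally from Proposition~\ref{prop:lowerbound} and the translation $\eta(A^{\alg}_\frakp)=\dim_\Q\End(A^{\alg}_\frakp)_\Q$.
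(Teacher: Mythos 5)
Your proposal is correct and follows essentially the same route as the paper: part (a) is the restatement of the inequality \eqref{eqn:fundineqAis} via $\eta(A^{\alg}_\frakp)=\dim_\Q\End(A^{\alg}_\frakp)_\Q$, and parts (b) and (c) come from the equality clause of Proposition~\ref{prop:lowerbound}(b) together with comparing the two coprime factorizations of $c_\frakp^{(k_\frakp)}(T)$, exactly as in the paper's proof. The only difference is that you make explicit two points the paper leaves implicit --- that $t=t_\frakp$ forces each $g_{\frakp,i}$ to be irreducible, and that $c_\frakp^{(k_\frakp)}(T)=\prod_i g_{\frakp,i}(T)^{e_i n_i}$ (with the residue-degree bookkeeping at a prime of $F_A$ above $\frakp$) --- which is a reasonable amount of added care rather than a new idea.
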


\begin{proof}
    The inequality $\eta(A^{\alg}) \leq \tfrac{1}{2}\eta(A^{\alg} _\frakp)$ is
    simply rewriting \eqref{eqn:fundineqAis}.

    If equality holds in \eqref{eqn:2etaaalg}, then by
    Proposition~\ref{prop:lowerbound}(b), the $t$ polynomials $g_{\frakp,i}(T)$
    are separable and pairwise coprime, and there are $t_\frakp$ distinct
    factors in total.  Hence, $t \leq t_\frakp$.

    Moreover, if equality holds in \eqref{eqn:2etaaalg} and $t = t_\frakp$, then we have two factorizations of $c_{\frakp}^{(k_\frakp)}(T)$ into
    powers of pairwise irreducibles, one in terms of $g_{\frakp,i}(T)$ and
    the other in terms of $h_{\frakp,i}(T)$.
    Therefore, as a multiset we have
    \begin{equation}
        \{ (m_{\frakp,i}, h_{\frakp,i}(T)) \}_{i=1} ^{t_\frakp} =  \{ e_i n_i , g_{\frakp,i}(T)) \}_{i=1} ^{t}.
    \end{equation}
    Taking the degree of the second entry and multiplying it by the first
    entry, we get
    \begin{equation}
        \{ (m_{\frakp,i}, m_{\frakp,i} \deg h_{\frakp,i}) \}_{i=1} ^{t_\frakp}
        =  \{ (e_i n_i , e_i n_i \deg g_{\frakp,i}) \}_{i=1} ^{t},
    \end{equation}
    and the desired equality follows by noting that
    $2 \dim A_i = e_i \deg g_{\frakp,i}$.
\end{proof}

We now show that (conjecturally) there are an abundance of primes where we have
an equality in \eqref{eqn:2etaaalg}, i.e., primes for which the endomorphism
algebra grows in a controlled (minimal) way under reduction modulo $\frakp$.

Let $S$ be the set of primes $\frakp$ of $F$ with the following properties:
\begin{enumroman}
  \item The prime $\frakp$ is a prime of good reduction for $A$.
  \item $\Nm(\frakp)$ is prime, i.e., the residue field $\#\F_\frakp$ has prime
    cardinality.
  \item $\End(A_\frakp^{\alg})$ is defined over $\F_\frakp$ (i.e.,
    $k_\frakp=1$).
  \item For all $i=1,\dots,t$, we have an isogeny $(A_i)_{\frakp} \sim
    A_{\frakp,i}^{e_i}$ over $\F_\frakp$ where $A_{\frakp,i}$ is simple;
    moreover, the abelian varieties $A_{\frakp,i}$ are pairwise nonisogenous.
  \item For all $i=1,\dots,t$, the algebra $\End(A_{\frakp,i} ^{\alg})_\Q$ is a
    field, generated by the Frobenius endomorphism.
\end{enumroman}

\begin{lemma} \label{lem:pSspicy}
    For $\frakp \in S$, we have $t=t_\frakp$ and $2\eta(A^{\alg}) =
    \eta(A^{\alg} _\frakp)$.
\end{lemma}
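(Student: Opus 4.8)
The plan is to verify, for the given prime $\frakp \in S$, the equality criterion in Proposition~\ref{prop:lowerbound}(b) — that the polynomials $g_{\frakp,i}(T)$ of Proposition~\ref{prop:lowerbound}(a) are separable and pairwise coprime — and then to read off $t = t_\frakp$ directly from the isogeny decomposition of $A_\frakp$. Property~(iii) gives $k_\frakp = 1$, so throughout we may replace $c^{(k_\frakp)}_{\frakp,i}(T)$ by $c_{\frakp,i}(T)$ and $\F_{\frakp^{k_\frakp}}$ by $\F_\frakp$.

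First I would pin down the $g_{\frakp,i}$. By property~(iv), $(A_i)_\frakp \sim A_{\frakp,i}^{e_i}$ over $\F_\frakp$ with $A_{\frakp,i}$ simple, and since isogenous abelian varieties over a finite field share the same Frobenius characteristic polynomial, $c_{\frakp,i}(T) = h_{\frakp,i}(T)^{e_i}$, where $h_{\frakp,i}(T) \in 1+T\Z[T]$ is the Frobenius characteristic polynomial of $A_{\frakp,i}$. Comparing with $c_{\frakp,i}(T) = g_{\frakp,i}(T)^{e_i}$ from Proposition~\ref{prop:lowerbound}(a) and using that a monic polynomial has a unique monic $e_i$-th root, I would conclude $g_{\frakp,i} = h_{\frakp,i}$. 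Now $A_{\frakp,i}$ is simple, so by Honda--Tate theory (as recalled in Remark~\ref{remark:hondatate}) its Frobenius characteristic polynomial is a power of the minimal polynomial of its Frobenius, the exponent being the square root of $\dim_{Z}\End(A_{\frakp,i})_\Q$ over the center $Z$; property~(v), which says this algebra is a field generated by the Frobenius, forces that exponent to be $1$, so $h_{\frakp,i} = g_{\frakp,i}$ is irreducible, hence separable ($\Q$ being perfect). Finally, for $i \neq j$ the polynomials $h_{\frakp,i}$ and $h_{\frakp,j}$ must differ: if they agreed, $A_{\frakp,i}$ and $A_{\frakp,j}$ would have the same Frobenius characteristic polynomial and so be isogenous over $\F_\frakp$ by Tate's isogeny theorem, contradicting the last clause of property~(iv). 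Thus the $g_{\frakp,i}$ are separable and pairwise coprime.

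Granting this, Proposition~\ref{prop:lowerbound}(b) gives equality in \eqref{eqn:fundineqAis}, i.e. $2\sum_{i=1}^t e_i n_i^2 \dim A_i = \dim_\Q\End(A_\frakp^{\alg})_\Q$; since $\eta(A^{\alg}) = \sum_i e_i n_i^2 \dim A_i$ by definition and $\eta(A_\frakp^{\alg}) = \dim_\Q\End(A_\frakp^{\alg})_\Q$ by the identity noted after \eqref{eqn:computeetap}, this is precisely $2\eta(A^{\alg}) = \eta(A_\frakp^{\alg})$. For $t = t_\frakp$, I would combine the isogenies of property~(iv) into $A_\frakp \sim \prod_{i=1}^t (A_i)_{\frakp}^{n_i} \sim \prod_{i=1}^t A_{\frakp,i}^{e_i n_i}$, in which the $A_{\frakp,i}$ are simple, pairwise nonisogenous, and each occurs with exponent $e_i n_i \geq 1$; this is exactly the decomposition of $A_\frakp$ into distinct simple isogeny factors, so $t_\frakp = t$. (Alternatively, once equality holds in \eqref{eqn:2etaaalg} one gets $t \le t_\frakp$ from Corollary~\ref{cor:inequaleta}(b), and $t_\frakp \le t$ from the displayed isogeny.)

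The one genuinely delicate point, and the step I would spend the most care on, is extracting the irreducibility of $h_{\frakp,i}$ from property~(v): this uses the Honda--Tate structure theory for simple abelian varieties over finite fields (Frobenius characteristic polynomial equals a power of the Frobenius minimal polynomial, with exponent controlled by the endomorphism division algebra), not merely Tate's dimension formula Lemma~\ref{lem:tateic}, which on its own only yields the conclusion after one already knows $\dim_\Q\End(A_{\frakp,i})_\Q = \deg m_i$. Everything else is routine bookkeeping with these factorizations together with Tate's isogeny theorem and the results already in hand.
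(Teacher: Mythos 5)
Your proof is correct and follows essentially the same route as the paper: identify $g_{\frakp,i}$ with the Frobenius characteristic polynomial of $A_{\frakp,i}$, deduce irreducibility from property (v) via Honda--Tate/Tate structure theory, get pairwise coprimality from the nonisogeny clause of (iv), and conclude via Proposition~\ref{prop:lowerbound}(b), with $t=t_\frakp$ read off from the decomposition in (iv). If anything, you are more careful than the paper's one-line justification of irreducibility, since you make explicit that property (v) (commutativity of $\End(A_{\frakp,i})_\Q$) is what forces the Honda--Tate exponent to be $1$.
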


\begin{proof}
    We have $t=t_\frakp$ by the decomposition in (iv). By
    \eqref{prop:lowerboundeqa}, $g_{\frakp,i}(T)$ is the characteristic
    polynomial of Frobenius for $A_{\frakp,i}$, and for every $i$, the
    polynomials $g_{\frakp,i}(T)$ are irreducible over $\Q$, otherwise the
    Honda--Tate theory would give a further splitting of $A_\frakp$, see
    Remark~\ref{remark:hondatate}. and pairwise coprime by (iv), so the
    equality holds by Proposition~\ref{prop:lowerbound}.
\end{proof}

The required analytic result about primes $\frakp \in S$ is essentially proved
by Zywina \cite{zywina-14}, as follows. For the statement of the Mumford--Tate
conjecture, see Zywina \cite[\S 2.5]{zywina-14} and the references given;
although the conjecture is still open, many general classes of abelian
varieties are known to satisfy the conjecture.

\begin{proposition}
    \label{prop:zywina-extended}
    Suppose that the Mumford--Tate conjecture for $A$ holds. Then the set $S$
    has positive density.
\end{proposition}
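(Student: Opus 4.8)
The plan is to extract this from Zywina \cite{zywina-14} after a reduction to the case where the $\ell$-adic monodromy group is connected. First I would reduce to $F = \FAconn$, the smallest extension of $F$ over which the $\ell$-adic monodromy group $G_\ell$ of $A$ becomes connected, exactly as in the proof of Proposition~\ref{prop:lowerbound}. Since $\FAconn/F$ is finite Galois with $\FAconn \supseteq F_A$, the decomposition \eqref{eqn:AFAdecomp} is already defined over $\FAconn$; and if $\frakp$ is a prime of $F$ with $\#\F_\frakp$ prime and unramified in $\FAconn$ (all but finitely many), then $\frakp$ has residue degree $1$ over $\Q$, hence splits completely in $\FAconn$, and for any $\frakq \mid \frakp$ in $\FAconn$ we have $\F_\frakq = \F_\frakp$ while the reduction of $A_{\FAconn}$ at $\frakq$ is the reduction $A_\frakp$ of $A$. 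Thus (i)--(v) hold for $\frakp$ over $F$ if and only if they hold for $\frakq$ over $\FAconn$, and since the primes of prime norm splitting completely in $\FAconn$ have positive density with constant fiber size $[\FAconn:F]$, it suffices to produce a positive-density set of primes of $\FAconn$ satisfying (i)--(v). So from now on $F = \FAconn$: then $G_\ell$ is connected, all geometric endomorphisms of $A$ are defined over $F$, and the decomposition $A \sim \prod_i A_i^{n_i}$ with $B_i = \End(A_i^{\alg})_\Q$ central simple of degree $e_i^2$ over $L_i = Z(B_i)$ is defined over $F$.

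Next I would bring in the Mumford--Tate conjecture for $A$ to identify the (now connected) group $G_\ell$ with the base change to $\Q_\ell$ of the Mumford--Tate group $G = \mathrm{MT}(A)$, a connected reductive $\Q$-group, so that the Frobenius classes $\Frob_\frakp$ define semisimple conjugacy classes in $G$. The analytic core---which is essentially Zywina's content in \cite{zywina-14}, built on Serre's theory of Frobenius tori and the Chebotarev density theorem for the compatible system $(\rho_\ell)_\ell$---is then the statement that the set of primes $\frakp$ of good reduction, with $\#\F_\frakp$ prime, for which $\Frob_\frakp$ is \emph{generic}, meaning that the Zariski closure of $\langle \Frob_\frakp \rangle$ in $G$ is a maximal torus and $\Frob_\frakp$ is regular, has positive density. (Good reduction with prime residue norm is itself a density-one condition, so conditions (i)--(ii) are harmless.) I would emphasize that this is genuinely more than equidistribution of conjugacy classes gives by itself, since ``$\Frob_\frakp$ generates a maximal torus'' is not a Zariski-open condition; this is precisely the delicate point treated by Zywina (and Serre before him), and it is the step I expect to be the main obstacle---not to re-prove, but to cite in exactly the right form and to match his genericity loci to the five conditions defining $S$.

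Finally, for $\frakp$ in this generic set I would check (iii)--(v) by running the analysis factor by factor on $A \sim \prod_i A_i^{n_i}$. Genericity gives $k_\frakp = 1$ (condition (iii)): the centralizer of $\Frob_\frakp$ in $G$ equals the centralizer of the maximal torus it generates, and replacing $\Frob_\frakp$ by a power leaves this torus, hence this centralizer, unchanged, so no endomorphism of $A_\frakp^{\alg}$ is defined only over a proper extension of $\F_\frakp$. For (iv), since $B_i$ is central simple of degree $e_i^2$ over $L_i$, the generic reduction $(A_i)_\frakp$ is isogenous over $\F_\frakp$ to a power $A_{\frakp,i}^{e_i}$ of a simple abelian variety $A_{\frakp,i}$---this is the source of the factorization $c_{\frakp,i}^{(k_\frakp)}(T) = g_{\frakp,i}(T)^{e_i}$ of Proposition~\ref{prop:lowerbound}(a)---and regularity of $\Frob_\frakp$ forces the Frobenius eigenvalues attached to distinct factors $A_i$ to be distinct, so the $g_{\frakp,i}(T)$ are separable and pairwise coprime and the $A_{\frakp,i}$ are pairwise nonisogenous. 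Regularity also yields (v): the characteristic polynomial $g_{\frakp,i}(T)$ of $\Frob_\frakp$ on $A_{\frakp,i}$ is irreducible and $\Q[\Frob_\frakp]$ is its own centralizer in $\End(A_{\frakp,i}^{\alg})_\Q$, so $\End(A_{\frakp,i}^{\alg})_\Q \simeq \Q[T]/(g_{\frakp,i}(T))$ is a field generated by Frobenius. Assembling these, every prime in the generic set lies in $S$, and $S$ therefore has positive density.
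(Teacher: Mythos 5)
Your overall strategy is the paper's: reduce to $F = \FAconn$, invoke Zywina's density theorem (this is where the Mumford--Tate conjecture enters), and then verify (i)--(v) at the resulting primes; your centralizer argument for (iii) is sound and parallels the paper's Corollary \ref{cor:dimQdimA}. The genuine gap is in your deduction of (iv) and (v) from ``genericity.'' Knowing that the Zariski closure of $\langle \Frob_\frakp \rangle$ is a maximal torus of the Mumford--Tate group and that $\Frob_\frakp$ is regular does \emph{not} force $g_{\frakp,i}(T)$ to be irreducible, nor $(A_i)_\frakp$ to be isogenous to the $e_i$-th power of a single simple abelian variety, nor the eigenvalue sets of distinct factors to be disjoint. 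Concretely, for an abelian surface with $\End(A^{\alg})_\Q = \Q$ (so Mumford--Tate group $\GSp_4$, $t=1$, $e_1=n_1=1$), a reduction isogenous to a product of two nonisogenous ordinary CM elliptic curves whose Frobenius eigenvalues are multiplicatively independent has four distinct eigenvalues generating a group of maximal rank $3$: the Frobenius torus is then maximal and $\Frob_\frakp$ is regular, yet the characteristic polynomial splits into two quadratics and (iv), (v) fail. Nothing in your genericity condition excludes such primes; excluding them on a positive-density set is exactly the content of Zywina's Lemma 6.3 and surrounding results, which control the Galois action on the eigenvalues (equivalently, the splitting field of the Frobenius torus), not merely the rank of the torus they generate. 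The paper cites this directly: Zywina's density-one set together with his Lemma 6.3 give (iv) and separability, after which (iii) follows from Corollary \ref{cor:dimQdimA}, and (v) follows because simplicity of $A_{\frakp,i}$ plus separability forces $g_{\frakp,i}$ irreducible, so $\Q[T]/(g_{\frakp,i}(T)) \subseteq \End(A_{\frakp,i})_\Q$ and equality holds by the dimension count $\dim_\Q \End(A_{\frakp,i})_\Q = 2\dim A_{\frakp,i}$. Note also that Proposition \ref{prop:lowerbound}(a) holds at \emph{every} good prime and only gives the characteristic-polynomial factorization; it is not ``the source'' of an isogeny $(A_i)_\frakp \sim A_{\frakp,i}^{e_i}$ with $A_{\frakp,i}$ simple, which is precisely what must be proved.

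A smaller slip: a prime of $F$ of residue degree $1$ over $\Q$ need not split completely in $\FAconn$, so that step of your reduction is false as stated. The reduction itself survives, because any prime $\frakq$ of $\FAconn$ with $\Nm(\frakq)$ prime satisfies $\F_\frakq = \F_{\frakq \cap F}$, so conditions (i)--(v) descend and a positive-density set of primes of $\FAconn$ lies over a positive-density set of primes of $F$; this is in effect the Chebotarev step with which the paper concludes.
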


\begin{proof}
    We follow the proof of a result by Zywina \cite[Theorem 1.4]{zywina-14}.
    He shows that the set of primes with properties (i)--(iv) has positive
    density, and we obtain our full result by a refining his proof to obtain
    property (v) as a consequence, as follows.

    As in the proof of Proposition \ref{prop:lowerbound}, we first suppose that
    $F=\FAconn$. Zywina \cite[Section 2.4]{zywina-14} considers the set of
    primes satisfying (i)--(ii) and such that the Frobenius eigenvalues of each
    $A_{\frakp,i}$ generate a torsion-free subgroup of maximal rank in
    $(\Q^{\alg})^{\times}$. Zywina shows that this set has density $1$ and
    proves \cite[Lemma 6.3]{zywina-14} that the Frobenius eigenvalues of
    $A_{\frakp,i}$ are distinct.  Next, among primes in this set, away from
    a set of primes of density zero \cite[Proposition 6.6]{zywina-14},
    the characteristic polynomial of Frobenius on $A_{\frakp,i}$ is irreducible
    \cite[Lemma 6.7]{zywina-14}, so that (iv) holds. The hypotheses of
    Corollary \ref{cor:dimQdimA} hold for the abelian variety $A_{\frakp,i}$
    over $\F_\frakp$, so all endomorphisms are defined over $\F_\frakp$, so
    (iii) holds, and moreover $\dim_\Q \End(A_{\frakp,i})_\Q = 2\dim
    A_{\frakp,i}$.  Finally, since $A_{\frakp,i}$ is simple, the characteristic
    polynomial $g_{\frakp,i}(T)$ of Frobenius is irreducible with $\deg
    g_{\frakp,i}(T)=2 \dim A_{\frakp,i}$, and so
    $\End(A_{\frakp,i}^{\alg})_\Q=\End(A_{\frakp,i} )_{\Q}$ contains as a
    subalgebra the field $\Q[T]/(g_{\frakp, i}(T))$ generated by Frobenius.  By
    dimension counts, equality holds and (v) follows.

    The general case follows by applying this argument to the set of primes
    $\frakp$ of $F$ such that a prime above $\frakp$ in the Galois extension
    $\FAconn$ over $F$ are in the above set: by the Chebotarev density theorem,
    this set has density $[\FAconn : F]^{-1}$.
\end{proof}

\begin{proposition} \label{prop:followingeffcomp}
    If the Mumford--Tate conjecture holds for $A$, then the following
    quantities are effectively computable:
    \begin{enumroman}
        \item The integer $\eta(A^{\alg})$;
        \item The number $t$ of geometrically simple factors of $A$; and
        \item The set of tuples $\{(e_in_i, n_i \dim A_i)\}_{i=1}^t$.
    \end{enumroman}
\end{proposition}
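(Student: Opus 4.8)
The plan is to make the inequality of Corollary~\ref{cor:inequaleta}(a) effective by searching for a single prime of $F$ lying in the set $S$. For any prime $\frakp$ of good reduction with $\Nm(\frakp)$ prime we can compute $c_\frakp(T)$ by point counting, compute $k_\frakp$ by Lemma~\ref{lem:endfq}, and hence read off $\eta(A^{\alg}_\frakp)$, the integer $t_\frakp$, and the multiset $\{(m_{\frakp,i},\tfrac{1}{2}m_{\frakp,i}\deg h_{\frakp,i})\}_{i=1}^{t_\frakp}$ from the factorization of $c_\frakp^{(k_\frakp)}(T)$ as in \eqref{eqn:computeetap}; and for every such $\frakp$ the \emph{a priori} bound $\eta(A^{\alg})\le\tfrac{1}{2}\eta(A^{\alg}_\frakp)$ holds. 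If moreover $\frakp\in S$, then Lemma~\ref{lem:pSspicy} gives $t=t_\frakp$ and $2\eta(A^{\alg})=\eta(A^{\alg}_\frakp)$, i.e.\ equality in \eqref{eqn:2etaaalg}, so Corollary~\ref{cor:inequaleta}(c) applies and
\begin{equation*}
    \bigl\{(m_{\frakp,i},\tfrac{1}{2}m_{\frakp,i}\deg h_{\frakp,i})\bigr\}_{i=1}^{t_\frakp}
    =\bigl\{(e_in_i,\,n_i\dim A_i)\bigr\}_{i=1}^{t}.
\end{equation*}
Thus one prime $\frakp\in S$ simultaneously exhibits $\eta(A^{\alg})$, the number $t$ of geometric simple factors, and the desired set of tuples; so it suffices to find and recognize such a prime.

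The algorithm I would run is therefore: enumerate primes $\frakp$ of $F$ of good reduction with $\Nm(\frakp)$ prime, for each compute $c_\frakp(T)$ and $k_\frakp$, test membership in $S$, and at the first $\frakp\in S$ output the three quantities as above. Granting the Mumford--Tate conjecture for $A$, Proposition~\ref{prop:zywina-extended} guarantees that $S$ has positive density, so $S\ne\emptyset$ and the enumeration terminates with the correct output.

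The crux is to decide $\frakp\in S$ effectively. Conditions (i) and (ii) are immediate, and condition (iii), that $k_\frakp=1$, is decided by Lemma~\ref{lem:endfq}(b). Condition (v) is decidable by Honda--Tate theory: for each irreducible factor $h$ of $c_\frakp^{(k_\frakp)}(T)$ the associated simple abelian variety has geometric endomorphism algebra a central division algebra over $\Q[T]/(h)$ whose Schur index is the least common denominator of the local Hasse invariants, computed from the $p$-adic valuations of the roots of $h$; condition (v) asks precisely that this index be $1$ for every $h$. The genuinely delicate condition is (iv), which refers to the reductions of the still-unknown geometric simple factors $A_i$ of $A$. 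Following the proof of Proposition~\ref{prop:zywina-extended}, I would instead impose the stronger, intrinsic hypothesis that the reciprocal roots of $c_\frakp(T)$ generate a torsion-free subgroup of $\Qbar^{\times}$ of maximal rank (compare Corollary~\ref{cor:dimQdimA} and Zywina \cite[\S 2.4]{zywina-14}, applied after base change to $\FAconn$): this implies (iii)--(v) by Zywina's analysis, and --- granting the Mumford--Tate conjecture, under which the maximal rank is that of the Mumford--Tate group and hence computable --- it is itself decidable and holds for a density-$1$, hence nonempty, set of primes of $S$. Replacing ``$\frakp\in S$'' by this sufficient condition keeps the algorithm correct and terminating. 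I expect pinning down this last decidability --- computing the maximal rank and the multiplicative relations among Frobenius eigenvalues --- to be the main obstacle to a fully rigorous account; when $A$ (or the $A_i$) is accessible to the certification methods of Sections~\ref{sec:complexendos}--\ref{sec:fitdivisor}, one may alternatively decide (iv) by certifying the geometric isogeny decomposition of $A$ and reducing the certified factors modulo $\frakp$.
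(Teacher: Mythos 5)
Your reduction of the problem to ``find one prime $\frakp\in S$ and read everything off from $c_\frakp^{(k_\frakp)}(T)$'' is sound as far as it goes, but the step you yourself flag as the crux is a genuine gap, not a technicality: you have no effective test for membership in $S$, and the surrogate condition you propose does not repair this. Conditions (iv)--(v) refer to the reductions of the unknown geometrically simple factors $A_i$, and your replacement --- that the reciprocal roots of $c_\frakp(T)$ generate a torsion-free subgroup of $\Qbar^{\times}$ of \emph{maximal} rank --- is only decidable if you already know what the maximal rank is. That rank is the rank of the Mumford--Tate (equivalently, under the conjecture, the $\ell$-adic monodromy) group of $A$, and nothing in the given data makes it effectively computable; indeed determining it is essentially of the same difficulty as determining the endomorphism structure you are trying to bound, so the argument is circular. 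Taking the maximum observed rank over many primes does not help, since you can never certify that the true maximum has been attained. Your fallback suggestion --- certify the full geometric isogeny decomposition first and reduce the certified factors --- has the same circularity: knowing that a certified decomposition is \emph{complete} (that the factors are geometrically simple and all endomorphisms have been found) is exactly what the upper-bound machinery is meant to establish. Note also that in your scheme correctness, not just termination, hinges on recognizing $\frakp\in S$: accepting a prime outside $S$ could yield a strict inequality in \eqref{eqn:2etaaalg} and hence a wrong output.

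The paper's proof avoids deciding membership in $S$ altogether by running a day-and-night sandwich. By day one searches for and \emph{certifies} endomorphisms and a (possibly partial) decomposition $A_L\sim (A_1')^{n_1'}\times\dots\times(A_{t'}')^{n_{t'}'}$ using the algorithms of the earlier sections, giving a certified lower bound $\eta'\leq\eta(A^{\alg})$ together with a candidate $t'$; by night one computes $t_\frakp$ and $\eta(A_\frakp^{\alg})$ at arbitrary good primes, giving the upper bound of Corollary~\ref{cor:inequaleta}(a). One halts as soon as $t'=t_\frakp$ and $2\eta'=\eta(A_\frakp^{\alg})$: at that moment the sandwich forces $\eta(A^{\alg})=\eta'$ and $t=t'$, and part (iii) follows from Corollary~\ref{cor:inequaleta}(c) --- with no need to know whether the halting prime lies in $S$, so correctness is unconditional on the prime. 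The Mumford--Tate conjecture, via Proposition~\ref{prop:zywina-extended} and Lemma~\ref{lem:pSspicy}, is used only to guarantee termination, namely that primes with $t=t_\frakp$ and $2\eta(A^{\alg})=\eta(A_\frakp^{\alg})$ occur with positive density while the day-side search eventually certifies the full decomposition. If you restructure your argument so that the role of $S$ is purely existential (termination) rather than something your algorithm must recognize, the proof goes through; as written, it does not.
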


\begin{proof}
    We pursue a day-and-night approach. By day, we search for endomorphisms of
    $A^{\alg}$, find a (partial) decomposition
    \begin{equation}
        A_L \sim (A_1')^{n_1'} \times \dots \times (A_{t'})^{n_{t'}'},
    \end{equation}
    and compute the quantity
    \begin{equation} \label{eqn:eit}
        \eta' = \sum_{i=1}^{t'} e_i' (n_i')^2 \dim A_i' \leq \eta(A^{\alg}).
    \end{equation}
    By night, by counting points on $A_\frakp$ we compute $t_\frakp$ and
    $\eta(A_\frakp^{\alg})$ for many good primes $\frakp$ using
    \eqref{eqn:computeetap}. We continue in this way until we find a prime
    $\frakp$ such that $t'=t_\frakp$ and $2\eta' = \eta(A_\frakp^{\alg})$:
    Proposition~\ref{prop:zywina-extended} and Lemma \ref{lem:pSspicy} assure
    us that this will happen frequently, proving that the quantities (i) and
    (ii) are effectively computable.  For (iii), we then appeal to
    Corollary~\ref{cor:inequaleta}.
\end{proof}

\begin{remark} \label{rmk:propothermt}
    The statement of Proposition~\ref{prop:followingeffcomp} can be proven in
    other ways without the Mumford--Tate conjecture, but the algorithm
    exhibited in the proof is quite practical! For example, we can expect to
    verify that the abelian variety $A$ over $F$ is geometrically a power of a
    simple abelian variety (so $t=t_\frakp=1$) after examining $[\FAconn:F]$
    Frobenius polynomials.
\end{remark}

\subsection{Upper bounds in higher genus: bounding the center}

Now we refine the decomposition obtained in the previous section to bound the
dimension of the geometric endomorphism algebra, by bounding the center. For a
theoretical result, we refer again to Lombardo \cite[\S 5]{lombardo-endos}. Our
method in this section are heuristic in nature, as we lack an analytic result
(Hypothesis~\ref{hyp:center}) that assures us that our method terminates. That
being said, our method is efficient, and if our method terminates then the
output will be correct.

We start with some preliminary lemmas.

\begin{lemma} \label{lem:LBFN}
    Let $B$ be a central simple algebra over $F$ and let $N$ be a finite
    extension of $F$.  Then there exists a maximal subfield $L \subseteq B$
    such that $L$ and $N$ are linearly disjoint over $F$.
\end{lemma}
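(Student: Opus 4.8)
\emph{Proof plan.} The plan is to produce a maximal subfield $L$ of $B$ that is linearly disjoint over $F$ from the Galois closure $\widetilde{N}$ of $N/F$; since $N \subseteq \widetilde{N}$, this yields linear disjointness from $N$ as well. Passing to $\widetilde{N}$ is what makes the argument work: for a \emph{Galois} extension $\widetilde{N}/F$, the condition $L \cap \widetilde{N} = F$ is equivalent to linear disjointness of $L$ and $\widetilde{N}$ over $F$ — whereas $L \cap N = F$ alone does not suffice when $N/F$ is not Galois — so it is enough to build a maximal subfield $L \subseteq B$ with $L \cap \widetilde{N} = F$. Recall that, writing $e \colonequals \sqrt{\dim_F B}$ for the degree of $B$, the maximal subfields of $B$ are exactly the subfields of degree $e$ over $F$, and an abstract degree-$e$ extension of $F$ embeds into $B$ (necessarily as a maximal subfield) if and only if it splits $B$; over a number field such fields exist, so fix one maximal subfield $L_0 \subseteq B$.

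Next I would choose an auxiliary prime. Let $S_B$ be the finite set of places of $F$ at which $B$ ramifies. By the Chebotarev density theorem there is a finite place $v_0 \notin S_B$ that splits completely in $\widetilde{N}$. Then I would invoke the existence of global field extensions with prescribed completions at finitely many places (Grunwald--Wang, equivalently the classical theory of splitting fields of central simple algebras over number fields): there is a degree-$e$ field extension $L/F$ with $L \otimes_F F_v \cong L_0 \otimes_F F_v$ for every $v \in S_B$ and with $L \otimes_F F_{v_0}$ the unramified extension of $F_{v_0}$ of degree $e$ — in particular a field. No Grunwald--Wang obstruction interferes here: at $v_0$ we impose only an unramified condition, and we have infinitely many admissible $v_0$.

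Then $L$ splits $B$: at each $v \in S_B$ this holds because $L \otimes_F F_v \cong L_0 \otimes_F F_v$ splits $B \otimes_F F_v$ ($L_0$ being a maximal subfield of $B$), and at all other places $B \otimes_F F_v$ is already split. By the Hasse principle for Brauer groups of number fields, $L$ splits $B$ globally, so $L$ embeds in $B$ as a maximal subfield; fix such an embedding and regard $L \subseteq B$.

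Finally I would check $L \cap \widetilde{N} = F$. If $L' \colonequals L \cap \widetilde{N}$ were strictly larger than $F$, then, being a subfield of $\widetilde{N}$ in which $v_0$ splits completely, $L'$ would also have $v_0$ split completely, so $L' \otimes_F F_{v_0} \cong F_{v_0}^{[L':F]}$ would carry a nontrivial idempotent; but $L' \subseteq L$ realizes $L' \otimes_F F_{v_0}$ as a subalgebra (with the same unit) of the field $L \otimes_F F_{v_0}$, which has no nontrivial idempotents — a contradiction. Hence $L \cap \widetilde{N} = F$, so $L$ and $\widetilde{N}$ are linearly disjoint over $F$, and therefore so are $L$ and $N$. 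The only step needing genuine care is the reduction in the first paragraph — replacing $N$ by its Galois closure — together with choosing $v_0$ to split completely there; the existence of $L$ with the prescribed local behaviour and the verification that it splits $B$ are routine applications of Grunwald--Wang and Albert--Brauer--Hasse--Noether theory.
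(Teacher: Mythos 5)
Your proof is correct and follows essentially the same route as the paper's: reduce to the Galois closure, use the Albert--Brauer--Hasse--Noether local splitting criterion at the ramified places of $B$, choose an auxiliary prime splitting completely in the Galois closure at which $L$ is required to be locally a field, and conclude $L \cap \widetilde{N} = F$. The only cosmetic difference is that you justify the existence of $L$ with prescribed completions via Grunwald--Wang (where weak approximation plus Krasner already suffices), while the paper cites Chebotarev for the same existence step.
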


\begin{proof}
    We may assume $N$ is Galois over $F$ and exhibit $L$ such that $L \cap N =
    F$. As a consequence of the Albert--Brauer--Hasse--Noether theorem (see
    e.g.\ Reiner \cite[Theorem 32.15]{MR1972204}), we have an embedding $L
    \hookrightarrow B$ if and only if $L$ satisfies finitely many local
    conditions (determined by the ramified primes in $B$). Choose a prime
    $\frakp$ of $F$ that splits completely in $N$ disjoint from these finitely
    many conditions, and add the new local condition that $L_\frakp$ is a
    field.  By Chebotarev there are infinitely many fields $L$ satisfying
    these conditions, and in $L \cap N$ we have $\frakp$ both splitting
    completely and with a unique prime above it, so $F=L \cap N$.
\end{proof}

\begin{lemma}\label{lemma:center}
    For $i=1,2$, let $K_i$ be a number field and let $B_i$ be a central simple
    algebra over $K_i$.  Let $\varphi\colon B_1 \hookrightarrow B_2$ be an
    injective $\Q$-algebra homomorphism. If $\dim_{K_1} B_1 = \dim_{K_2} B_2 =
    m^2$, then $\varphi(K_1) \subseteq K_2$.
\end{lemma}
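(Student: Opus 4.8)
The plan is to prove the statement after extending scalars to $\Qbar$, where everything splits into matrix algebras. Since $\opchar \Q = 0$, the extensions $K_i/\Q$ are separable, so $K_i \otimes_\Q \Qbar \cong \Qbar^{m_i}$ with $m_i = [K_i : \Q]$, and hence $B_i \otimes_\Q \Qbar \cong \prod_{i'=1}^{m_i} \M_m(\Qbar)$, each factor being a central simple $\Qbar$-algebra of degree $m$. Under this identification the $m_i$ primitive idempotents of $K_i \otimes_\Q \Qbar$ are exactly the primitive central idempotents of $B_i \otimes_\Q \Qbar$, one supported on each matrix block, and their $\Qbar$-span is $Z(B_i \otimes_\Q \Qbar) = K_i \otimes_\Q \Qbar$. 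The map $\varphi \otimes \id_{\Qbar}$ is again an injective unital $\Qbar$-algebra homomorphism, so it will be enough to show that $\varphi \otimes \id$ sends each primitive idempotent of $K_1 \otimes_\Q \Qbar$ into $Z(B_2 \otimes_\Q \Qbar)$; then $\varphi(K_1) \subseteq B_2 \cap Z(B_2 \otimes_\Q \Qbar) = Z(B_2) = K_2$, which is the claim.

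For the key step I would fix one matrix block of $B_2 \otimes_\Q \Qbar$, compose $\varphi \otimes \id$ with the corresponding projection $\pi_i$, and study the resulting unital $\Qbar$-algebra homomorphism $\psi_i \colon \prod_{j=1}^{m_1} \M_m(\Qbar) \to \M_m(\Qbar)$. Writing $f_1, \dots, f_{m_1}$ for the primitive central idempotents of the source, the elements $\epsilon_j = \psi_i(f_j)$ are orthogonal idempotents of $\M_m(\Qbar)$ with $\sum_j \epsilon_j = I_m$, and $\psi_i$ restricts on the $j$-th block to a unital homomorphism $\M_m(\Qbar) \to \epsilon_j \M_m(\Qbar) \epsilon_j \cong \M_{r_j}(\Qbar)$, where $r_j$ is the rank of $\epsilon_j$. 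Any nonzero homomorphism out of the simple algebra $\M_m(\Qbar)$ is injective, forcing $m^2 \leq r_j^2$ when $r_j > 0$; hence each $r_j$ lies in $\{0\} \cup \{m, m+1, \dots\}$, and since the ranks add up to $m$, exactly one $r_j$ equals $m$ and the rest vanish. Therefore $\psi_i(f_j) \in \{0, I_m\}$ for all $j$, so every block-component of $(\varphi \otimes \id)(f_j)$ is $0$ or the identity, i.e.\ $(\varphi \otimes \id)(f_j) \in Z(B_2 \otimes_\Q \Qbar)$, as needed.

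The point requiring care — and the reason a one-line centralizer argument does not suffice — is that $\varphi(B_1)$ need not be all of $B_2$: the subfield $\varphi(K_1)$ commutes with $\varphi(B_1)$ but a priori not with $B_2$, so one must actually control how the idempotents move. The rank count above is exactly that control, and it is here that the hypothesis $\dim_{K_1} B_1 = \dim_{K_2} B_2 = m^2$ is used essentially: both source and target blocks have size $m$, so an injective unital homomorphism between two of them is forced to be an isomorphism, leaving no room for one block to spread across two smaller images. (Without the equality of degrees the conclusion genuinely fails, since $\Q(\sqrt{2})$ embeds into $\M_2(\Q)$ yet $\Q(\sqrt{2}) \not\subseteq \Q$.) The remaining ingredients — separability of $K_i/\Q$, that base change along $\Q \hookrightarrow \Qbar$ preserves injectivity, and that an element of $B_2$ central in $B_2 \otimes_\Q \Qbar$ already lies in $Z(B_2)$ — are routine, and I would only record them in passing.
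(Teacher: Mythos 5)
Your proof is correct, and it takes a genuinely different route from the paper's. You base change to $\Qbar$, split $B_1 \otimes_\Q \Qbar$ and $B_2 \otimes_\Q \Qbar$ into products of copies of $\M_m(\Qbar)$, and show by a rank count that each primitive central idempotent of the source must map, block by block, to $0$ or the identity: an injective unital map $\M_m(\Qbar) \to \epsilon\,\M_m(\Qbar)\,\epsilon \cong \M_r(\Qbar)$ forces $r \geq m$, while the ranks of the orthogonal idempotents sum to $m$. This is exactly where the hypothesis $\dim_{K_1} B_1 = \dim_{K_2} B_2 = m^2$ enters, matching its role in the paper, and your descent back to $\Q$ via $B_2 \cap Z(B_2 \otimes_\Q \Qbar) = Z(B_2) = K_2$ is sound. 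The paper instead stays over $\Q$: it chooses a maximal subfield $L \subseteq B_1$ whose image is linearly disjoint from the normal closure of $\varphi(K_1)K_2$ (this is precisely what Lemma \ref{lem:LBFN}, via Albert--Brauer--Hasse--Noether and Chebotarev, provides), and then compares dimensions of the commutative subalgebra $K_2\varphi(L) \subseteq B_2$, which has $K_2$-dimension at most $m$; the resulting chain of (in)equalities forces $\dim_{\varphi(K_1)\cap K_2}\varphi(K_1) = 1$. What your approach buys is self-containedness and generality: it uses only Wedderburn theory over an algebraically closed field, makes no use of the arithmetic of number fields (so Lemma \ref{lem:LBFN} would not be needed for this purpose), and works verbatim whenever $K_1, K_2$ are finite separable extensions of the base field. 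What the paper's approach buys is that it never leaves $\Q$ and packages the needed freedom of choice into Lemma \ref{lem:LBFN}, at the cost of invoking class field theory for what is, as you show, an essentially linear-algebraic fact. One cosmetic remark: your index $i$ for the target blocks collides with the lemma's $i = 1,2$, and the injectivity of $\varphi \otimes \id$ is never really used (unitality plus simplicity of the blocks does all the work), but neither affects correctness.
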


\begin{proof}
    Let $L$ be a maximal subfield of $B_1$, so $\dim_{K_1} L = m$. Thus
    $\varphi(L)$ is a subfield of $B_2$, and $S \colonequals K_2 \varphi(L)
    \subset B_2$, the subring of $B_2$ generated by $K_2$ and $\varphi(L)$, is
    a commutative $K_2$-subalgebra of $B_2$. Hence, $\dim_{K_2} S \leq m$.

    Let $E = \varphi(K_1) \cap K_2 \subset B_2$ and $N_2$ be the normal closure
    of $\varphi(K_1) K_2$. We may choose $L$ such that $\varphi(L)$ and $N_2$
    are linearly disjoint by Lemma \ref{lem:LBFN}. Thus $\varphi(L) \otimes_E
    K_2 \xrightarrow{\sim} S$ and
    \begin{equation}
        \begin{aligned}
            m \geq \dim_{K_2} S &= \frac{\dim_E S}{\dim_E K_2}
            = \frac{\dim_E \varphi(L) \dim_E K_2}{\dim_E K_2}\\
            &= \dim_E \varphi(L)
            = \dim_{\varphi(K_1)} \varphi(L) \dim_E \varphi(K_1)  \geq m.
        \end{aligned}
    \end{equation}
    Therefore, $\dim_E \varphi(K_1) = 1$ and $\varphi(K_1) \subseteq K_2$.
\end{proof}

We now recall the notation described in the previous section (starting with
\eqref{eqn:AFAdecomp}), in particular $L_i =Z(B_i)$.

\begin{corollary}\label{corollary:center}
    If the polynomial $g_{i, \frakp}(T)$ in
    Proposition~\ref{prop:lowerbound}\textup{(a)} is irreducible for some $i$,
    then $L_i$ is isomorphic to a subfield of $\Q[T]/ (g_{\frakp,i } (T))$.
\end{corollary}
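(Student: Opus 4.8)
The plan is to push the specialization homomorphism $s_\frakp\colon \End(A^{\alg}) \hookrightarrow \End(A_\frakp^{\alg})$ down to the $i$-th isotypic block and then apply Lemma~\ref{lemma:center}. Concretely, using the decomposition $\End(A^{\alg})_\Q \simeq \prod_j \M_{n_j}(B_j)$ of \eqref{eqn:AalgB}, I would introduce the idempotent $\epsilon_i \in \End(A^{\alg})_\Q$ that is the matrix unit $E_{11}$ in the factor $\M_{n_i}(B_i)$ and $0$ in the others. Then $\epsilon_i A^{\alg}$ is isogenous to $A_i^{\alg}$ with $\End(\epsilon_i A^{\alg})_\Q = \epsilon_i\End(A^{\alg})_\Q\epsilon_i \simeq B_i$, and since the reduction of an isogeny is an isogeny, $s_\frakp(\epsilon_i)A_\frakp^{\alg}$ is isogenous to $(A_i)_\frakp^{\alg}$. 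Restricting $s_\frakp$ therefore yields an injective $\Q$-algebra homomorphism $\varphi\colon B_i \hookrightarrow C_i \colonequals \End((A_i)_\frakp^{\alg})_\Q$. I would also note that $\End((A_i)_\frakp^{\alg})$ is a $\Gal(\F_\frakp^{\alg}\,|\,\F_{\frakp^{k_\frakp}})$-stable corner $s_\frakp(\epsilon_i)\End(A_\frakp^{\alg})s_\frakp(\epsilon_i)$ of $\End(A_\frakp^{\alg})$, hence is itself defined over $\F_{\frakp^{k_\frakp}}$; so $C_i = \End((A_i)_{\F_{\frakp^{k_\frakp}}})_\Q$, and $c_{\frakp,i}^{(k_\frakp)}(T)$ is precisely the characteristic polynomial of the Frobenius acting on the abelian variety whose geometric endomorphism algebra is $C_i$.

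Next I would pin down the structure of $C_i$ under the hypothesis that $g_{\frakp,i}(T)$ is irreducible. By Proposition~\ref{prop:lowerbound}(a) we have $c_{\frakp,i}^{(k_\frakp)}(T) = g_{\frakp,i}(T)^{e_i}$, a power of an irreducible (hence separable, as $\operatorname{char}\Q=0$) polynomial, so the relevant abelian variety over $\F_{\frakp^{k_\frakp}}$ is isotypic. By Tate's theorems for abelian varieties over finite fields, $C_i$ is then a central simple algebra whose center is $\Q$ adjoined the Frobenius endomorphism; the minimal polynomial of Frobenius is the radical of $g_{\frakp,i}^{e_i}$, namely $g_{\frakp,i}$ itself, so $Z(C_i) \simeq \Q[T]/(g_{\frakp,i}(T))$. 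Applying Lemma~\ref{lem:tateic} to the factorization of $g_{\frakp,i}(T)^{e_i}$ over $\C$, whose distinct reciprocal roots are the $\deg g_{\frakp,i}$ roots of $g_{\frakp,i}$, each with multiplicity $e_i$, I get $\dim_\Q C_i = (\deg g_{\frakp,i})\,e_i^2$, i.e.\ $\dim_{Z(C_i)} C_i = e_i^2$.

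Finally I would invoke Lemma~\ref{lemma:center}: $B_i$ is central simple over the number field $L_i$ with $\dim_{L_i}B_i = e_i^2$, $C_i$ is central simple over the number field $Z(C_i)$ with $\dim_{Z(C_i)}C_i = e_i^2$, and $\varphi\colon B_i\hookrightarrow C_i$ is an injective $\Q$-algebra homomorphism, so with $m=e_i$ the lemma gives $\varphi(L_i)\subseteq Z(C_i)$. Since $\varphi$ is injective, $L_i \simeq \varphi(L_i)$ is a subfield of $Z(C_i) \simeq \Q[T]/(g_{\frakp,i}(T))$, which is the assertion.

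The main obstacle is the first paragraph: one must carefully justify that reduction modulo $\frakp$ commutes with splitting off the $A_i$-isotypic part and that the minimal field of definition $k_\frakp$ of $\End(A_\frakp^{\alg})$ also works for the corner $\End((A_i)_\frakp^{\alg})$ --- there may be a prime of $F_A$ above $\frakp$ with a strictly larger residue field that has to be tracked through the argument, though this does not affect the geometric conclusion. Once that is in place, the remaining two steps are routine: the structure of $C_i$ is standard Honda--Tate theory, and the dimension count plus the final deduction are immediate from Lemmas~\ref{lem:tateic} and~\ref{lemma:center}.
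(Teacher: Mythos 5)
Your proof is correct and takes essentially the same route as the paper: both apply Lemma~\ref{lemma:center} to the specialization homomorphism $\End(A_i^{\alg})_\Q \hookrightarrow \End((A_i)_\frakp^{\alg})_\Q$, with Tate identifying the target's center as $\Q[T]/(g_{\frakp,i}(T))$. The paper simply reduces $A_i$ (over $F_A$) directly instead of passing through an idempotent in $\End(A^{\alg})_\Q$, and it leaves implicit the dimension count $\dim_{Z(C_i)} C_i = e_i^2$ that you verify via Lemma~\ref{lem:tateic}; that extra detail is harmless and in fact supplies the equal-dimension hypothesis of Lemma~\ref{lemma:center}.
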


\begin{proof}
    Apply Lemma~\ref{lemma:center} to the specialization homomorphism
    \begin{equation}
        s_{i,\frakp}\colon \End(A_i^{\alg})_\Q \hookrightarrow \End((A_i)_\frakp^{\alg})_\Q;
    \end{equation}
    On the left we have center $L_i$ and on the right we have center
    $\Q[T]/(g_{\frakp,i}(T))$ by Tate \cite[Theorem 2(a)]{tate}.
\end{proof}

Now we address the hypothesis that will allow us to deduce the candidate
fields for the centers.

\begin{hypothesis} \label{hyp:center}
    For every $i=1,\dots,t$, there exists a nonempty, finite collection of
    primes $\frakp_{ij} \in S$ such that $K$ is a subfield of $\Q[T]/
    (g_{\frakp_{ij}} (T))$ for all $j$ if and only if $K$ is a subfield of
    $L_i$.
\end{hypothesis}

The hypothesis is known to hold for abelian surfaces by an explicit argument of
Lombardo \cite[Theorem~6.10]{lombardo-endos}. In our experiments with higher
genus curves, every Jacobian variety we saw satisfied Hypothesis
\ref{hyp:center}.

\begin{proposition} \label{prop:centereffcomp}
    If the Mumford--Tate Conjecture and Hypothesis~\textup{\ref{hyp:center}} hold
    for $A$, then the centers $L_i$ are effectively computable.
\end{proposition}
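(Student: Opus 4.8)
The plan is to pin down each center $L_i$ by a day-and-night search that traps it between a lower bound coming from \emph{certified} endomorphisms and an upper bound coming from reductions modulo suitable primes, in the spirit of the proof of Proposition~\ref{prop:followingeffcomp}. The key point is that correctness of the output will require neither hypothesis; only termination will.

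I would begin by invoking Proposition~\ref{prop:followingeffcomp}: under the Mumford--Tate conjecture its procedure outputs $\eta(A^{\alg})$, the number $t$ of geometrically simple factors, and the tuples $\{(e_in_i,n_i\dim A_i)\}_i$, and in the course of the ``day'' phase it certifies an honest isogeny decomposition $A_{F_A}\sim A_1^{n_1}\times\cdots\times A_t^{n_t}$; from here on each factor $A_i$ is explicitly available, so we can reduce it modulo primes and search for its endomorphisms. Now fix an index $i$. The night side runs over primes $\frakp$ of $F$ and keeps those in the set $S$ that precedes Lemma~\ref{lem:pSspicy}: conditions (i)--(iii) are decidable (good reduction of $A$ at $\frakp$; $\Nm(\frakp)$ prime; and $k_\frakp=1$, read off from $c_\frakp(T)$ via Lemma~\ref{lem:endfq}), and conditions (iv)--(v) are decidable from the Frobenius polynomial $c_{\frakp,i}(T)$ of $(A_i)_\frakp$ (obtained by point-counting) together with the Honda--Tate recipe of Remark~\ref{remark:hondatate}; moreover, by Proposition~\ref{prop:zywina-extended}, the Mumford--Tate conjecture guarantees a positive density of such $\frakp$. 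For $\frakp\in S$ one has $k_\frakp=1$ and $c_{\frakp,i}(T)=g_{\frakp,i}(T)^{e_i}$ with $g_{\frakp,i}$ irreducible (which in particular exhibits $e_i$), so $g_{\frakp,i}(T)$ is recovered as the $e_i$-th root, and by Corollary~\ref{corollary:center} the field $L_i$ embeds into $\Q[T]/(g_{\frakp,i}(T))$. I would then maintain a number field $M_i$ of largest possible degree among those that embed into $\Q[T]/(g_{\frakp,i}(T))$ for every $\frakp\in S$ found so far; since $L_i$ is itself one such common subfield, $[L_i:\Q]\le[M_i:\Q]$ at every stage.

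On the day side, I would run the certification algorithms of Sections~\ref{sec:complexendos}--\ref{sec:fitdivisor} on each $A_i$, accumulating a certified subalgebra $C_i\subseteq\End(A_i^{\alg})_\Q$, and halt the computation for index $i$ as soon as $\dim_\Q C_i = e_i^2\,[M_i:\Q]$, returning $L_i\colonequals Z(C_i)$. This is correct with no appeal to either hypothesis: since $\dim_\Q\End(A_i^{\alg})_\Q = e_i^2[L_i:\Q]$ and $C_i\subseteq\End(A_i^{\alg})_\Q$, the stopping equality forces $e_i^2[M_i:\Q]=\dim_\Q C_i\le e_i^2[L_i:\Q]\le e_i^2[M_i:\Q]$, so every inequality is an equality; in particular $C_i=\End(A_i^{\alg})_\Q$, whence $Z(C_i)=L_i$.

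What remains — and is the main obstacle — is termination. The Mumford--Tate conjecture, via Proposition~\ref{prop:zywina-extended}, keeps the night side supplied with primes of $S$; Hypothesis~\ref{hyp:center} then ensures that after finitely many of them the fields common to all the $\Q[T]/(g_{\frakp,i}(T))$ are exactly the subfields of $L_i$, so $[M_i:\Q]$ has descended to $[L_i:\Q]$; and the day side, verifying putative endomorphism matrices at increasing precision, eventually certifies all of $\End(A_i^{\alg})_\Q$, so $\dim_\Q C_i$ has risen to $e_i^2[L_i:\Q]$; at that point the stopping rule fires. The difficulty is precisely that Hypothesis~\ref{hyp:center} gives no effective handle on how many primes $\frakp_{ij}$ are needed, which is why no direct computation is available and why the two-sided search is unavoidable — this is exactly the gap that keeps the proposition conditional.
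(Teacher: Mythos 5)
Your proposal follows essentially the same route as the paper's proof: a day-and-night loop in which Corollary~\ref{corollary:center} applied at primes of $S$ bounds each $L_i$ from above, the Mumford--Tate conjecture (via Proposition~\ref{prop:zywina-extended}) together with Hypothesis~\ref{hyp:center} guarantees termination, and certified endomorphisms from the day phase supply the matching lower bound; your explicit stopping rule simply makes precise the termination test the paper leaves implicit. The one point you gloss over that the paper treats directly is how the index $i$ is kept consistent across different primes $\frakp$ (you assume each factor $A_i$ can be reduced and its Frobenius polynomial read off directly, whereas the paper invokes Lombardo's lemma on $\ell^r$-torsion for this bookkeeping), but this is a presentational difference rather than a different argument.
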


\begin{proof}
    We continue with a day-and-night approach as described in the proof of
    Proposition~\ref{prop:followingeffcomp}.  The decomposition of $A$ by day
    into $t$ factors allows us to decompose $A[\ell ^r] \simeq
    (\Z/\ell\Z)^{2\dim A}$ into $t$ factors, and for $r$ large enough we can
    keep track of the index $i$ between different primes $\frakp$: see Lombardo
    \cite[Lemma 5.2]{lombardo-endos}. By night, we will have encountered an
    abundance of primes $\frakp$ such that $t=t_\frakp$ and
    $2\eta(A^{\alg})=\eta(A_\frakp^{\alg})$, and $c_\frakp(T)$ factors as
    \begin{equation}
        g_{\frakp, i}(T) ^{e_1 n_1} \cdots g_{\frakp, t} (T) ^{e_t n_t}
    \end{equation}
    with the polynomials $g_{\frakp,i}$ irreducible and pairwise coprime.

    For these primes $\frakp$, by Corollary \ref{corollary:center} we have $L_i
    \hookrightarrow \Q[T]/(g_{\frakp,i}(T))$, immediately giving only finitely
    many possibilities for each $L_i$. Finally, by Hypothesis~\ref{hyp:center}
    the only field that embeds in all $\Q[T]/(g_{\frakp,i}(T))$ is $L_i$, and
    so by computing intersections of subfields we will eventually find $L_i$.
\end{proof}

\begin{remark}
    Parallel to Remark~\ref{rmk:propothermt}, in practice the algorithm of
    Proposition~\ref{prop:centereffcomp} performs very well. In most cases, the
    abelian variety is geometrically a power and $L_i=\Q$, and in practice this
    can be quickly deduced by simply computing that the greatest common divisor
    of the discriminants $\disc \Q[T]/(g_{\frakp,i}(T))$ is equal to $1$.
\end{remark}

\begin{remark}
    Many cohomological algorithms for counting points on a curve can be adapted
    to keep track of the index $i$ between different primes $\frakp$ rather
    than resorting to the $\ell$-adic representation. For example, in those
    point counting algorithms that employ Monsky--Washnitzer cohomology, we may
    choose a basis of differentials that works for all good primes $\frakp$ and
    use the decomposition of $A$ into factors to decompose these differentials
    and thereby compute the action of Frobenius on each component. A similar
    argument applies to methods that compute the Hasse--Witt matrices.
\end{remark}

\section{Examples} \label{sec:examples}

We now give some further explicit illustrations of the methods developed above.

\subsection{Examples in genus 2}

\begin{example}
    We begin with the curve of genus 2 with LMFDB label
    \href{http://www.lmfdb.org/Genus2Curve/Q/12500/a/12500/1}{\texttt{12500.a.12500.1}},
    the smallest curve with potential RM in the LMFDB.  For convenience, we
    complete the square from the minimal Weierstrass model and work with the
    equation
    \begin{equation}
        X \colon y^2 = 5 x^6 + 10 x^3 - 4 x + 1 = f(x)
    \end{equation}
    so that $X \times X$ has affine patch described by $y_i^2=f(x_i)$ with
    $i=1,2$.

    Let $\alpha$ be a root of the polynomial $x^2 - x - 1$. Then we certify that
    the endomorphism ring of $X$ is the maximal order in the quadratic field $\Q
    (\alpha)$ of discriminant $5$.
    With basis of differentials $\d x/y, x\d x/y$, a generator has tangent
    representation
    $\begin{pmatrix} -\alpha & 0 \\ 0 & \alpha - 1 \end{pmatrix}$.
    For the base point $P_0 = (0, 1)$ a corresponding
    divisor in $X \times X$ is defined by the ideal
    \begin{equation}
        \begin{aligned}
            &\langle (2 \alpha - 1) x_1^2 x_2^2 - (\alpha + 2) x_1^2 x_2 + x_1^2
            - (\alpha + 2) x_1 x_2^2 + \alpha x_1 x_2 + x_2^2, \\
            &\quad (3 \alpha + 1) x_1^2 x_2 y_2
            - (2 \alpha + 4) x_1^2 y_2
            - (3 \alpha + 1) x_1 y_1 x_2^2
            + (4 \alpha + 3) x_1 y_1 x_2
            \\
            &\qquad - (\alpha - 1) x_1 y_1
            - (4 \alpha + 3) x_1 x_2 y_2 + (\alpha - 1) x_1 y_2
            + (2 \alpha + 4) y_1 x_2^2
            \\
            &\qquad + (1 -\alpha) y_1 x_2 - y_1
            + (\alpha + 1) x_2 y_2 + y_2 \rangle.
        \end{aligned}
    \end{equation}
    The second projection from the corresponding divisor to $X$ has degree $2$.
    Alternatively, the image of a point $P = (v, w)$ of $X$ under the morphism $X
    \to \Sym^2 (X)$ is described by the equation $x^2 + a_1 x + a_2 = 0$, $y =
    b_1 x + b_2$, where
    \begin{equation}
        \begin{aligned}
            a_1 =&
            \frac{-5 \alpha  v^2 + (\alpha +2) v}{ 5 v^2 -5 \alpha  v + (2 \alpha -1)},
            \\
            a_2 =& \frac{ (2 \alpha - 1) v^2}{ 5 v^2 - 5 \alpha v +  (2 \alpha - 1)},
            \\
            b_1 =&
            \frac{  -(7 \alpha + 4) v^2 w + (6 \alpha + 2) v w - 2 w}{5 v^5 + 5 (1 -2
                \alpha) v^4 + (3 -\alpha ) v^3 +  (7 \alpha - 1) v^2 -  (2 \alpha + 3) v
            + 1},
            \\
            b_2 =& \frac{ (3 \alpha + 1) v^2 w - (2 \alpha + 1) v w +  w}{5 v^5 + 5
                (1 -2 \alpha ) v^4 + (3 -\alpha ) v^3 + (7 \alpha - 1) v^2 - (2 \alpha +
            3) v + 1} .
        \end{aligned}
    \end{equation}
    The first of these calculations need $40$ terms in the Puiseux expansion,
    whereas the latter needs $172$. The combination of these calculations takes
    around $2.5$ CPU seconds.
\end{example}

\begin{example}
    As a second example, we consider the curve
    \href{http://www.lmfdb.org/Genus2Curve/Q/20736/l/373248/1}{\texttt{20736.l.373248.1}}
    with simplified Weierstrass model
    \begin{equation}
        X \colon y^2 = 24 x^5 + 36 x^4 - 4 x^3 - 12 x^2 + 1.
    \end{equation}
    We find that this curve has QM over $\Qbar$ by a non-Eichler order of reduced
    discriminant $36$ in the indefinite quaternion algebra over $\Q$ with
    discriminant $6$. The full ring of endomorphisms is only defined over
    $\Q(\theta)$ where $\theta$ is a root of $x^8 + 4 x^6 + 10 x^4 + 24 x^2 +
    36$. Over the smaller field $\Q (\sqrt{-3})$ we get the endomorphism ring $\Z
    [3 \sqrt{-1}]$. A generator $\alpha$ with $\alpha^2=-9$ has tangent
    representation
    \begin{equation}
        M = \begin{pmatrix}
            -\sqrt{-3} & 2 \sqrt{-3}
            \\
            \sqrt{-3} & \sqrt{-3}
        \end{pmatrix}.
    \end{equation}
    Our algorithms can perform the corresponding verification over the field $\Q
    (\sqrt{-3})$ itself, by using the base point $P_0 = (0, 1)$. The second
    projection from the corresponding divisor to $X$ has degree $18$, and using
    the Cantor representation one needs functions of degree up to $105$. The
    corresponding number of terms needed in the Puiseux expansion is $128$ in the
    former case and $346$ in the latter. This time the calculations take around
    $8.5$ CPU seconds to finish.
\end{example}

\begin{example}
    A third example in genus $2$ is
    \href{http://www.lmfdb.org/Genus2Curve/Q/294/a/8232/1}
    {\texttt{294.a.8232.1}} with model
    \begin{equation}
        X\colon y^2 = x^6 - 8x^4 + 2x^3 + 16x^2 - 36x - 55.
    \end{equation}

    The endomorphism ring of this curve is of index $2$ in the ring $\Z \times
    \Z$. The methods of \ref{sec:splitjac} show that it admits two maps of degree
    $2$ to the elliptic curves
    \begin{equation}
        E_1 \colon y^2 = x^3 + 3440/3 x - 677248/27
        \quad \text{and} \quad
        E_2 \colon y^2 = x^3 + x^3 + 752/3 x - 9088/27
    \end{equation}
    The maps send a point $(x, y)$ of $X$ to
    \begin{multline}
        \biggl(
            \frac{24 x^4 + 72 x^3 + 4 x^2 - 24 x y - 200 x - 72 y - 200}{3 (x+2)^2},
            \\
            \frac{32 x^6 + 144 x^5 + 16 x^4 - 32 x^3 y - 768 x^3 - 144 x^2 y - 656
            x^2 - 144 x y + 1488 x + 1792} {(x+2)^3}
        \biggr)
    \end{multline}
    on $E_1$ and
    \begin{multline}
        \biggl(
            \frac{24 x^4 + 24 x^3 - 92 x^2 - 24 x y - 56 x - 24 y + 88}{3 (x+2)^2},
            \\
            \frac{-32 x^6 - 48 x^5 + 176 x^4 + 32 x^3 y + 224 x^3 + 48 x^2 y - 304
            x^2 - 48 x y - 240 x - 64 y + 192}{(x+2)^3}
        \biggr)
    \end{multline}
    on $E_2$. Finding these projections only requires $39$ terms of the Puiseux
    series, and takes around $1$ second.
\end{example}

\subsection{Examples in higher genus}

\begin{example}
    The final hyperelliptic curve that we consider is the curve
    \begin{equation}
        X \colon y^2 = x^8 - 12 x^7 + 50 x^6 - 108 x^5 + 131 x^4 - 76 x^3 - 10 x^2
        + 44 x - 19
    \end{equation}
    of genus $3$.  This is a model for the modular curve $X_0(35)$ over $\Q$,
    and in fact this equation was obtained as a modular equation satisfied by
    modular forms of level $35$. We could make some guesses about the
    endomorphism ring of its Jacobian by computing the space of cusp forms of
    weight $2$ and level $35$, but let us apply our algorithms as if we were
    ignorant of its modular provenance.

    We find that the Jacobian of $X$ splits into an elliptic curve and the
    Jacobian of a genus $2$ curve. Its endomorphism algebra $\Q \times \Q
    (\sqrt{17})$ is generated by an endomorphism whose tangent representation
    with respect to the standard basis of differentials $\left\{ x^i \d x / y
    \right\}_{i=1,2,3}$ is given by
    \begin{equation}
        \begin{pmatrix}
            1 & 0 & -1
            \\
            1 & -2 & 0
            \\
            -2 & -2 & 1
        \end{pmatrix} .
    \end{equation}
    which has characteristic polynomial $(t + 1) (t^2 - t - 4)$. The curve $X$
    admits a degree $2$ morphism to the elliptic curve $Y \colon x^3 + 6656/3 x
    - 185344/27$ which is given by
    \begin{equation}
        (x, y) \longmapsto
        \left(\frac{64 x^2 - 400 x + 272}{3 (x^2 - x - 1)},
        \frac{224 y}{\left(x^2-x-1\right)^2}\right) .
    \end{equation}
    Determining this projection again takes about a second. A curve that
    corresponds to the complementary factor dimension $2$ can be found by using
    the results by Ritzenthaler--Romagny in \cite{ritzenthaler-romagny}.
\end{example}

\begin{example}
    Our algorithms can equally well deal with more general curves. For example,
    it is known from work of Liang \cite{liang-thesis} that the plane quartic
    \begin{multline}
        X \colon x_0^4 + 8 x_0^3 x_2 + 2 x_0^2 x_1 x_2 + 25 x_0^2 x_2^2 - x_0 x_1^3 +
        2 x_0 x_1^2 x_2 + 8 x_0 x_1 x_2^2 + \\ 36 x_0 x_2^3 + x_1^4 - 2 x_1^3 x_2
        + 5 x_1^2 x_2^2 + 9  x_1 x_2^3 + 20 x_2^4 = 0
    \end{multline}
    has real multiplication by the algebra $\Q (\alpha)$, with $\alpha =
    2\cos(2\pi/7)$. We have independently verified this result. The equations
    for the divisor are too large to reproduce here, but they can be generated
    with the package \cite{cms-package}. The tangent representation of the
    endomorphism with respect to an echelonized basis of differential forms at
    the base point $P_0 = (-2 : 0 : 1)$ is of the rather pleasing form
    \begin{equation}
        \begin{pmatrix}
            \alpha & 0 & 0 \\
            0 & \alpha^2 - 2 & 0 \\
            0 & 0 & -\alpha^2 + \alpha + 1
        \end{pmatrix} .
    \end{equation}
    This verification takes about $7$ CPU seconds and requires Puiseux series
    with $66$ coefficients of precision.
\end{example}

\begin{example}
    As a final aside, we consider Picard curves of the form
    \begin{equation}
        X \colon y^3 = a_0 x^4 + a_2 x^2 + a_4.
    \end{equation}
    Petkova--Shiga \cite{petkova-shiga} have shown that the connected component
    of the  Sato--Tate group of a general such curve is equal to $\U (1) \times
    \SU (2)_2$. The endomorphism ring of such a general curve $X$ is of index $4$
    in a maximal order of $\Q (\zeta_3) \times B$, where $B$ is the indefinite
    quaternion algebra of discriminant $6$.

    The Jacobian of $X$ therefore splits into an elliptic curve with CM and the
    Jacobian of a curve $Y$ of genus $2$ that has QM. Once again the curve $Y$
    can be identified explicitly using recent work of Ritzenthaler--Romagny
    \cite{ritzenthaler-romagny}. It turns out that the field of definition of the
    endomorphism ring of $X$ is the splitting field of the polynomial $t^6 - (2^4
    (a_4/a_0) (a_2^2 - 4 a_0 a_4))$. Using our algorithms, an explicit expression
    of the correspondence between $X$ and $Y$ can also be obtained.
\end{example}

\bibliographystyle{alpha}
\bibliography{biblio}

\newcommand{\etalchar}[1]{$^{#1}$}
\begin{thebibliography}{{LMF}16}

\bibitem[BCP97]{Magma}
Wieb Bosma, John Cannon, and Catherine Playoust.
\newblock The {M}agma algebra system. {I}. {T}he user language.
\newblock {\em J. Symbolic Comput.}, 24(3-4):235--265, 1997.
\newblock Computational algebra and number theory (London, 1993).

\bibitem[BSS{\etalchar{+}}16]{bssvy}
Andrew~R. Booker, Jeroen Sijsling, Andrew~V. Sutherland, John Voight, and Dan
  Yasaki.
\newblock A database of genus 2 curves over the rational numbers.
\newblock {\em LMS J. Comput. Math.}, 19(suppl. A):235--254, 2016.

\bibitem[CD17]{cunningham-dembele-hmf}
Clifton Cunningham and Lassina Demb\'el\'e.
\newblock Lifts of {H}ilbert modular forms and application to modularity of
  abelian varieties.
\newblock \href{https://arxiv.org/abs/1705.03054}{\tt arXiv:1705.03054}, 2017.

\bibitem[CE15]{CE}
Jean-Marc Couveignes and Tony Ezome.
\newblock Computing functions on {J}acobians and their quotients.
\newblock {\em LMS J. Comput. Math.}, 18(1):555--577, 2015.

\bibitem[Cha14]{charles-picardk3}
Fran\c{c}ois Charles.
\newblock On the {P}icard number of {K}3 surfaces over number fields.
\newblock {\em Algebra Number Theory}, 8(1):1--17, 2014.

\bibitem[CMS17]{cms-package}
Edgar Costa, Nicolas Mascot, and Jeroen Sijsling.
\newblock Rigorous computation of the endomorphism ring of a {J}acobian.
\newblock \href{https://github.com/edgarcosta/endomorphisms/}{\tt
  https://github.com/edgarcosta/endomorphisms/}, 2017.

\bibitem[KM04]{kkm-linalg}
Kamal Khuri-Makdisi.
\newblock Linear algebra algorithms for divisors on an algebraic curve.
\newblock {\em Math. Comp.}, 73(245):333--357, 2004.

\bibitem[KM16]{kumar-mukamel}
Abhinav Kumar and Ronen~E. Mukamel.
\newblock Real multiplication through explicit correspondences.
\newblock {\em LMS J. Comput. Math.}, 19(suppl. A):29--42, 2016.

\bibitem[Lia14]{liang-thesis}
Dun Liang.
\newblock {\em Explicit equations of non-hyperelliptic genus 3 curves with real
  multiplication by {$\Q (\zeta_7 + \zeta_7^{-1})$}}.
\newblock PhD thesis, Louisiana State University, 2014.

\bibitem[LLL82]{LLL}
Arjen~K. Lenstra, Hendrik~W. Lenstra, Jr., and L\'aszl\'o Lov\'asz.
\newblock Factoring polynomials with rational coefficients.
\newblock {\em Math. Ann.}, 261(4):515--534, 1982.

\bibitem[LLR05]{liu-lorenzini-raynaud}
Qing Liu, Dino Lorenzini, and Michel Raynaud.
\newblock On the {B}rauer group of a surface.
\newblock {\em Invent. Math.}, 159(3):673--676, 2005.

\bibitem[{LMF}16]{lmfdb}
The {LMFDB Collaboration}.
\newblock The l-functions and modular forms database.
\newblock \url{http://www.lmfdb.org}, 2016.
\newblock [Online; accessed 21 July 2016].

\bibitem[Lom16]{lombardo-endos}
Davide Lombardo.
\newblock Computing the geometric endomorphism ring of a genus 2 {J}acobian.
\newblock \href{http://arxiv.org/abs/1610.09674}{\tt arXiv:1610.09674}, 2016.

\bibitem[Mas13]{mascot-palermo}
Nicolas Mascot.
\newblock Computing modular {G}alois representations.
\newblock {\em Rend. Circ. Mat. Palermo (2)}, 62(3):451--476, 2013.

\bibitem[Mil75a]{milne-tate}
James~S. Milne.
\newblock On a conjecture of {A}rtin and {T}ate.
\newblock {\em Ann. of Math. (2)}, 102(3):517--533, 1975.

\bibitem[Mil75b]{milne-tate-a}
James~S. Milne.
\newblock On a conjecture of {A}rtin and {T}ate: addendum, 1975.
\newblock \url{http://www.jmilne.org/math/articles/add/1975a.pdf}.

\bibitem[MN17]{molin-neurohr}
Pascal Molin and Christian Neurohr.
\newblock Computing period matrices and the {A}bel-{J}acobi map of
  superelliptic curves.
\newblock \href{http://arxiv.org/abs/1707.07249}{\tt arXiv:1707.07249}, 2017.

\bibitem[Mol10]{molin}
Pascal Molin.
\newblock {\em {Numerical integration and L functions computations}}.
\newblock Theses, {Universit{\'e} Sciences et Technologies - Bordeaux I},
  October 2010.

\bibitem[Mum70]{mumford}
David Mumford.
\newblock {\em Abelian varieties}.
\newblock Tata Institute of Fundamental Research Studies in Mathematics, No. 5.
  Published for the Tata Institute of Fundamental Research, Bombay; Oxford
  University Press, London, 1970.

\bibitem[Oor88]{oort}
Frans Oort.
\newblock Endomorphism algebras of abelian varieties.
\newblock In {\em Algebraic geometry and commutative algebra, {V}ol.\ {II}},
  pages 469--502. Kinokuniya, Tokyo, 1988.

\bibitem[Poo96]{poonen-ants}
Bjorn Poonen.
\newblock Computational aspects of curves of genus at least {$2$}.
\newblock In {\em Algorithmic number theory ({T}alence, 1996)}, volume 1122 of
  {\em Lecture Notes in Comput. Sci.}, pages 283--306. Springer, Berlin, 1996.

\bibitem[PS11]{petkova-shiga}
Maria Petkova and Hironori Shiga.
\newblock A new interpretation of the {S}himura curve with discriminant 6 in
  terms of {P}icard modular forms.
\newblock {\em Arch. Math. (Basel)}, 96(4):335--348, 2011.

\bibitem[Rei03]{MR1972204}
Irving Reiner.
\newblock {\em Maximal orders}, volume~28 of {\em London Mathematical Society
  Monographs. New Series}.
\newblock The Clarendon Press, Oxford University Press, Oxford, 2003.
\newblock Corrected reprint of the 1975 original, With a foreword by M. J.
  Taylor.

\bibitem[RR16]{ritzenthaler-romagny}
Christophe Ritzenthaler and Matthieu Romagny.
\newblock On the {P}rym variety of genus 3 covers of elliptic curves.
\newblock \href{http://arxiv.org/abs/1612.07033}{\tt arXiv:1612.07033}, 2016.

\bibitem[Sij16]{sijsling-agm}
Jeroen Sijsling.
\newblock arithmetic-geometric\_mean; a package for calculating period matrices
  via the arithmetic-geometric mean.
\newblock \href{https://github.com/JRSijsling/arithmetic-geometric\_mean/}{\tt
  https://github.com/JRSijsling/arithmetic-geometric\_mean/}, 2016.

\bibitem[Smi05]{smith-thesis}
Benjamin Smith.
\newblock {\em Explicit endomorphisms and correspondences}.
\newblock PhD thesis, University of Sydney, 2005.

\bibitem[Tat66]{tate}
John Tate.
\newblock Endomorphisms of abelian varieties over finite fields.
\newblock {\em Invent. Math.}, 2:134--144, 1966.

\bibitem[vW99a]{vanwamelen-cm0}
Paul~B. van Wamelen.
\newblock Examples of genus two {CM} curves defined over the rationals.
\newblock {\em Math. Comp.}, 68(225):307--320, 1999.

\bibitem[vW99b]{vanwamelen-cm}
Paul~B. van Wamelen.
\newblock Proving that a genus {$2$} curve has complex multiplication.
\newblock {\em Math. Comp.}, 68(228):1663--1677, 1999.

\bibitem[vW00]{vanwamelen-smart}
Paul~B. van Wamelen.
\newblock Poonen's question concerning isogenies between {S}mart's genus {$2$}
  curves.
\newblock {\em Math. Comp.}, 69(232):1685--1697, 2000.

\bibitem[vW06]{vanwamelen-magma}
Paul~B. van Wamelen.
\newblock Computing with the analytic {J}acobian of a genus 2 curve.
\newblock In {\em Discovering mathematics with {M}agma}, volume~19 of {\em
  Algorithms Comput. Math.}, pages 117--135. Springer, Berlin, 2006.

\bibitem[Wat69]{waterhouse-69}
William~C. Waterhouse.
\newblock Abelian varieties over finite fields.
\newblock {\em Ann. Sci. \'Ecole Norm. Sup. (4)}, 2:521--560, 1969.

\bibitem[WM71]{waterhouse-milne-71}
William~C. Waterhouse and James~S. Milne.
\newblock Abelian varieties over finite fields.
\newblock In {\em 1969 {N}umber {T}heory {I}nstitute ({P}roc. {S}ympos. {P}ure
  {M}ath., {V}ol. {XX}, {S}tate {U}niv. {N}ew {Y}ork, {S}tony {B}rook,
  {N}.{Y}., 1969)}, pages 53--64. Amer. Math. Soc., Providence, R.I., 1971.

\bibitem[Zyw14]{zywina-14}
David Zywina.
\newblock The splitting of reductions of an abelian variety.
\newblock {\em Int. Math. Res. Not. IMRN}, (18):5042--5083, 2014.

\end{thebibliography}

\end{document}